\newcommand{\N}{\mathbb{N}}
\newcommand{\R}{\mathbb{R}}
\newcommand{\Z}{\mathbb{Z}}
\newcommand{\A}{\mathscr{A}}
\newcommand{\B}{\mathscr{B}}
\newcommand{\M}{\mathscr{M}}
\newcommand{\Opn}{\mathscr{O}}
\newcommand{\Sym}{\mathscr{S}}
\newcommand{\T}{\mathscr{T}}
\newcommand{\U}{\mathscr{U}}
\newcommand{\V}{\mathscr{V}}
\newcommand{\W}{\mathscr{W}}
\newcommand{\X}{\mathscr{X}}
\theoremstyle{plain}
\newtheorem{thm}{Theorem}[section]
\newtheorem{lmm}[thm]{Lemma}
\newtheorem{prp}[thm]{Proposition}
\newtheorem{crl}[thm]{Corollary}
\theoremstyle{definition}
\newtheorem{dfn}[thm]{Definition}
\newtheorem{qst}[thm]{Question}
\def\ens#1{\{ #1 \}}
\def\Ens#1{\left\{ #1 \right\}}
\def\set#1#2{\{ #1 \mid #2 \}}
\def\Set#1#2{\left\{ #1 \ \middle|\ #2 \right\}}
\def\t#1{\text{\rm #1}}
\def\m#1{\text{#1}}
\def\v#1{| #1 |}
\def\vv#1{\left| #1 \right|}
\def\n#1{\| #1 \|}
\def\nn#1{\left\| #1 \right\|}
\title{
On Tate's Acyclicity and Uniformity of\\
Berkovich Spectra and Adic Spectra
}
\author{Tomoki Mihara}
\date{}
\begin{document}

\maketitle
\begin{abstract}
We construct a non-sheafy uniform Banach algebra such that a rational localisation of the Berkovich spectrum does not preserve the uniformity. We also construct uniform affinoid rings in the sense of Roland Huber such that rational localisations of the adic spectra do not preserve the uniformity. One of them is an example of a non-sheafy uniform affinoid ring. We introduce the notion of local uniformity instead, and prove that the local uniformity implies the sheaf condition.
\end{abstract}

\tableofcontents

\section{Introduction}
\label{Introsuction}

Throughout this paper, let $k$ denote a complete valuation field with non-trivial valuation of height $1$. An affinoid ring $(\A^{\triangleright},\A^{+})$ is said to be {\it sheafy} if the structure presheaf on the topology of $\t{Spa}(\A^{\triangleright},\A^{+})$ is a sheaf. We always assume that $\A^{\triangleright}$ is a complete Tate ring, and mainly consider the case where $\A^{\triangleright}$ is a topological $k$-algebra. Similarly to an affinoid ring, a Banach $k$-algebra $(\A,\n{\cdot}_{\A})$ is said to be {\it sheafy} if the Banach algebra of sections on a rational domain of the Berkovich spectrum is independent of the choice of its presentation, and if the structure presheaf on the Grothendieck topology generated by rational domains is a sheaf. Since the universality of an affinoid domain does not necessarily hold, the Banach $k$-algebra of sections associated to a rational domain of a Berkovich spectrum deeply depends on the choice of its presentation in general. This is one of the essential differences between a Berkovich spectrum and an adic spectrum.

\vspace{0.2in}
A Banach $k$-algebra $(\A,\n{\cdot})$ (resp.\ An affinoid ring $(\A^{\triangleright},\A^+)$ over $k$) is said to be {\it uniform} if the equality $\n{f^2} = \n{f}^2$ holds for any $f \in \A$ (resp.\ if $(\A^{\triangleright})^{\circ}$ is bounded). We construct an example of a uniform Banach algebra such that a rational localisation of the Berkovich spectrum does not preserve the uniformity. It is an example of a non-sheafy uniform Banach algebra. We also construct examples of uniform affinoid rings such that rational localisations of the adic spectra do not preserve the uniformity. One of them is an example of a non-sheafy uniform affinoid ring. We explain briefly the motivation of constructing such examples.

\vspace{0.2in}
First of all, we are interested in whether there is a good criterion of the sheaf condition. We recall several known facts for it.

\begin{thm}[\cite{Tat} Theorem 8.2]
\label{Tate}
Every affinoid $K$-algebra is sheafy.
\end{thm}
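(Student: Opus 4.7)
The plan is to reduce the general sheaf condition on the Grothendieck topology of rational domains to a single, very concrete exactness statement about a two-term Laurent cover, and to verify that case by direct computation with Tate algebras.

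First I would observe that, because the Grothendieck topology in question is generated by finite covers by rational domains, it suffices to check the sheaf axiom (together, if one wants Tate's acyclicity in its full form, with vanishing of higher Čech cohomology) for an arbitrary such finite cover $\U = \ens{U_{i}}_{i \in I}$ of an affinoid $\t{Sp}(\A)$. Equivalently, I would show that the augmented Čech complex
\begin{equation*}
0 \longrightarrow \A \longrightarrow \prod_{i} \A_{i} \longrightarrow \prod_{i,j} \A_{ij} \longrightarrow \cdots
\end{equation*}
is exact, where $\A_{i_{1}\cdots i_{r}}$ is the affinoid algebra of the rational domain $U_{i_{1}} \cap \cdots \cap U_{i_{r}}$.

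The key reduction is to finite Laurent covers. By the Gerritzen--Grauert theorem, every finite cover of an affinoid by rational domains can be refined by a finite cover by Laurent domains of the form $\t{Sp}(\A\langle T_{1},\ldots,T_{n}\rangle / (T_{j}-f_{j}))$ and their complements. Because Čech cohomology behaves well under refinements of covers, and because one can insert intermediate Laurent subdivisions, I would then reduce inductively to the case of a two-term Laurent cover $\ens{\t{Sp}(\A\langle f \rangle), \t{Sp}(\A\langle 1/f \rangle)}$ for a single non-nilpotent $f \in \A$. Here the whole Čech complex reduces to the two-term sequence
\begin{equation*}
0 \longrightarrow \A \longrightarrow \A\langle T\rangle/(T-f) \oplus \A\langle S\rangle/(Sf-1) \longrightarrow \A\langle T,S\rangle/(T-f,Sf-1) \longrightarrow 0 .
\end{equation*}

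This last sequence I would verify by hand: injectivity on the left is clear from the maximum principle, surjectivity on the right follows from the fact that on $\A\langle T,S\rangle/(T-f,Sf-1)$ one has $TS=1$, so any element admits a unique Laurent expansion $\sum_{n \in \Z} a_{n} T^{n}$ with $\n{a_{n}} r^{n}, \n{a_{n}} r^{-n} \to 0$ for any admissible $r$, and exactness in the middle is the statement that such an element lies in $\A$ iff its positive and negative tails individually converge. The main obstacle is not the final computation, which is essentially formal, but the Gerritzen--Grauert refinement step: establishing that arbitrary finite rational covers can be replaced by Laurent covers up to refinement is where the bulk of the technical work sits, and this is the point at which one really uses the Noetherianness and Weierstrass structure theory of Tate algebras over $k$.
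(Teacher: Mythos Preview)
The paper does not prove this statement: Theorem~\ref{Tate} is listed in the introduction, alongside the results of Berkovich, Huber, and Scholze, purely as a known fact attributed to Tate (with the reference \cite{Tat}), and no argument for it is given anywhere in the body of the paper. So there is no ``paper's own proof'' to compare against.

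That said, your sketch is essentially the classical route to Tate's acyclicity, and is broadly sound. Two small corrections are worth noting. First, the refinement of an arbitrary finite rational cover by a Laurent cover is not the content of the Gerritzen--Grauert theorem; it is a separate (and much more elementary) combinatorial lemma already in Tate's original paper, while Gerritzen--Grauert concerns the structure of general affinoid subdomains. Second, ``injectivity on the left is clear from the maximum principle'' is not quite right when $\A$ is non-reduced: the maximum modulus principle only sees the reduction. The injectivity of $\A \to \A\langle f\rangle \times \A\langle f^{-1}\rangle$ is instead checked directly from the explicit description of these algebras as quotients of $\A\langle T\rangle$ and $\A\langle S\rangle$, using that an element killed in both must have all power-series coefficients vanish. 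Neither point affects the overall structure of the argument, which is the standard one.
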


\begin{thm}[\cite{Ber} Proposition 2.2.5]
\label{Berkovich}
Every $K$-affinoid algebra is sheafy.
\end{thm}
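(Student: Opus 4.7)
The plan is to follow Tate's original strategy for strictly affinoid algebras, adapted to allow non-strictly $k$-affinoid algebras in the sense of Berkovich. The first step is to verify that the Banach algebra of sections attached to a rational domain of the Berkovich spectrum $\M(\A)$ is functorially well-defined: for any two presentations of the same rational domain, the resulting affinoid localisations are canonically isomorphic as Banach $\A$-algebras. This rests on the universal property of the affinoid localisation, which is available for $k$-affinoid algebras by Berkovich's general theory, and it ensures that the structure presheaf on the Grothendieck topology of rational domains is genuinely a presheaf.

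Second, I would reduce the sheaf condition on arbitrary finite rational coverings to the case of simple Laurent coverings of the form $\ens{\v{f}\le c,\ \v{f}\ge c}$ for a single $f\in\A$ and a single real $c>0$. Every finite rational covering admits a refinement by a Laurent covering, and every Laurent covering is the common refinement of simple Laurent coverings. A standard argument with completed tensor products shows that the Cech complex of a common refinement is the tensor product of the individual Cech complexes, so acyclicity in the simple case propagates to the general one.

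Third, to handle a simple Laurent covering I would base change to a complete non-archimedean extension $K/k$ large enough that $c \in \v{K^{\times}}$. Then $\A \hat{\otimes}_{k} K$ is strictly $K$-affinoid, and after rescaling so that $c = 1$, Tate's theorem (Theorem~\ref{Tate}) provides exactness of the Cech complex over $K$. Because the Banach algebras appearing are of finite topological type over $\A$, their formation commutes with $-\hat{\otimes}_{k} K$, so the Cech complex over $\A \hat{\otimes}_{k} K$ is obtained from the one over $\A$ by applying this base change.

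The main obstacle is the descent step. One must know that $-\hat{\otimes}_{k} K$ is faithful and exact on the category of Banach $\A$-modules appearing in the Cech complex, and for this it is essential to choose $K/k$ carefully, typically as a transcendental extension adjoining a suitable element of absolute value $c$, so that faithful flatness in the Banach sense holds. Once this is in place, combined with the Banach open mapping theorem applied to the finite modules in the complex, exactness descends from $\A \hat{\otimes}_{k} K$ to $\A$, and the sheaf property follows.
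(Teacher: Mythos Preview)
The paper does not contain a proof of this statement. Theorem~\ref{Berkovich} is stated in the Introduction as one of several known results (attributed to Berkovich, with the reference \cite{Ber1}) that serve as background and motivation for the paper's own investigations; no argument for it is given anywhere in the text. Consequently there is no ``paper's own proof'' against which to compare your proposal.

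That said, your outline is a faithful summary of the standard argument found in \cite{Ber1}: one first uses the universal property of affinoid localisation (valid for $k$-affinoid algebras, not for arbitrary Banach $k$-algebras) to see that the presheaf is well-defined, then reduces via Tate's refinement argument to simple Laurent coverings, and finally passes to a suitable extension $K/k$ (typically $K = k_c$, the completion of $k(T)$ for the Gauss norm of radius $c$) to reduce to the strictly affinoid case covered by Theorem~\ref{Tate}. Your identification of the descent step as the delicate point is accurate; Berkovich handles it by showing that $\hat\otimes_k K$ for such $K$ is an exact and faithful functor on the relevant category of Banach modules. So your proposal is correct in spirit, but there is simply nothing in the present paper to compare it to.
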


\begin{thm}[\cite{Hub2} Theorem 2.2]
\label{Huber}
Every strongly Noetherian Tate ring is sheafy.
\end{thm}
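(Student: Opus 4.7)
The plan is to verify the sheaf condition on the basis of rational subsets by a chain of reductions to a single explicit Čech exactness statement, and then to exploit the strongly Noetherian hypothesis at the end so that rational localisations behave flatly.

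First I would recast the problem cohomologically: because rational subsets form a basis closed under finite intersections, it suffices to show that for every finite rational covering $\ens{U_i}_{i=1}^n$ of $X := \t{Spa}(\A^{\triangleright},\A^{+})$, the augmented Čech complex
\[
0 \to \A^{\triangleright} \to \prod_i \O_X(U_i) \to \prod_{i<j} \O_X(U_i \cap U_j) \to \cdots
\]
is exact. Next I would reduce to a \emph{standard rational covering} --- one cut out by finitely many $f_1,\dots,f_n \in \A^{\triangleright}$ generating the unit ideal with $i$-th piece $\set{x}{\v{f_j(x)} \leq \v{f_i(x)} \neq 0 \ \forall j}$ --- because any rational covering is refined by such a covering assembled from its defining data.

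I would then refine a standard rational covering further by a \emph{Laurent covering}, namely the $2^n$ pieces $U_{\underline{\epsilon}} = \set{x \in X}{\v{f_i(x)}^{\epsilon_i} \leq 1 \ \forall i}$ indexed by $\underline{\epsilon} \in \ens{\pm 1}^n$. An induction on $n$ collapses the Laurent case to the one-element case, so the whole sheaf property reduces to proving exactness of
\[
0 \to \A^{\triangleright} \to \A^{\triangleright}\langle T\rangle/(T-f) \times \A^{\triangleright}\langle T\rangle/(fT-1) \xrightarrow{\,a-b\,} \A^{\triangleright}\langle T, S\rangle/(T-f, fS-1) \to 0
\]
for every $f \in \A^{\triangleright}$, together with the evident integral version for $\A^{+}$.

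To prove this core exactness I would pick a ring of definition $\A_0 \subseteq \A^{\triangleright}$ and a topologically nilpotent unit $\varpi$, present the three rings explicitly as $\varpi$-adic completions of restricted power series, and check the kernel and image of $a-b$ by term-by-term manipulation of series. This is where the \emph{strongly Noetherian} hypothesis is indispensable: it ensures that $\A^{\triangleright}\langle T_1,\dots,T_r\rangle$ is Noetherian for all $r$, from which a standard Artin--Rees / Noetherian flatness argument forces rational localisations to be flat over $\A^{\triangleright}$ and forces the ideals $(T-f)$ and $(fT-1)$ to be closed, so that the displayed quotients really do compute $\O_X$ on the Laurent pieces. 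The \emph{main obstacle} is precisely this flatness step: without strong Noetherianness one loses control over completions of localisations and the Čech complex can fail to be exact --- which, as the subsequent sections of this paper show, is not a defect of the method but a genuine phenomenon. Once flatness is in hand, the required exactness reduces to a formal computation with restricted power series, completing the proof.
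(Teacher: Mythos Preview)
The paper does not prove this theorem at all: it is stated in the introduction as one of four background results (attributed to Tate, Berkovich, Huber, and Scholze) cited purely as motivation, with no proof or proof sketch supplied anywhere in the text. The only relevant content in the paper is the bibliographic reference to Huber's works \cite{Hub1,Hub2,Hub3}. So there is no ``paper's own proof'' against which to compare your attempt.

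That said, your sketch is a faithful outline of Huber's original argument: the reduction from arbitrary rational coverings to standard rational coverings to Laurent coverings, the induction down to a single element $f$, and the explicit \v{C}ech complex for the covering $\ens{\v{f} \leq 1} \cup \ens{\v{f} \geq 1}$ are exactly the steps in \cite{Hub2}. Your identification of the strongly Noetherian hypothesis as supplying closedness of the relevant ideals and flatness of rational localisations is also correct, and your remark that this is the step that fails in general is precisely the point the present paper is making with its counter-examples. If anything is underspecified in your sketch, it is the passage from ``acyclicity for the simple Laurent covering of every rational subset'' back up to ``acyclicity for an arbitrary rational covering'': this uses that rational localisations of strongly Noetherian Tate rings are again strongly Noetherian, so that the induction can be run inside each $U_i$, and a spectral-sequence (or double-complex) comparison between a covering and a refinement. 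But these are standard and present in Huber's account, so there is no genuine gap.
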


\begin{thm}[\cite{Sch} Theorem 6.3 (iii)]
\label{Scholze}
Every perfectoid affinoid $K$-algebra is sheafy.
\end{thm}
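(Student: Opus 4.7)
The plan is to follow the tilting strategy. Let $(\A,\A^{+})$ be a perfectoid affinoid $K$-algebra with pseudouniformiser $\varpi$. The first step is to construct the functorial tilt $(\A^{\flat},\A^{\flat+})$, a perfectoid affinoid algebra over the characteristic $p$ perfectoid field $K^{\flat}$, together with a canonical homeomorphism $\t{Spa}(\A,\A^{+})\to\t{Spa}(\A^{\flat},\A^{\flat+})$. I would then verify that this homeomorphism identifies rational subsets: given $f_{1},\ldots,f_{n},g\in\A$ generating the unit ideal, one approximates them in the Banach topology by elements admitting compatible systems of $p$-power roots, so that the resulting rational subset of the tilted spectrum coincides with the original one.

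The second step is to reduce sheafiness of $(\A,\A^{+})$ to sheafiness of its tilt $(\A^{\flat},\A^{\flat+})$. The key point is that each rational localisation $\A\langle f_{1},\ldots,f_{n}/g\rangle$ is again perfectoid, and its tilt is naturally identified with the analogous rational localisation of $\A^{\flat}$. Since \v{C}ech complexes commute with tilting in this framework, it suffices to prove sheafiness in characteristic $p$.

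The third and main step treats the characteristic $p$ case. Here the Frobenius $\varphi(a)=a^{p}$ is a bijection on $\A$, so $\A$ is the completion of a filtered colimit $\varinjlim_{n}\A_{n}$ along Frobenius pullbacks, where each $\A_{n}$ is a strictly Noetherian Tate affinoid algebra. By Theorem \ref{Huber} the \v{C}ech complex of any finite rational covering of $\t{Spa}(\A_{n},\A_{n}^{+})$ is exact. I would then realise the \v{C}ech complex of the corresponding rational covering of $\t{Spa}(\A,\A^{+})$ as the completion of the filtered colimit of these exact \v{C}ech complexes, and deduce exactness in the limit.

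The hard part is controlling exactness under this colimit and subsequent completion: a naive limit of exact sequences need not be exact, and the \v{C}ech complex lives in a Banach category where topological flatness and uniformity are delicate. The decisive tool is almost mathematics with respect to the ideal $(\varpi^{1/p^{\infty}})\subseteq K^{\circ}$: one first proves that the \v{C}ech complex at the level of $\A^{+}$ is almost exact, using that each approximating complex becomes exact after inverting $\varpi$ and that Frobenius raises the error term to higher $\varpi$-adic order. The uniformity of $\A$ then allows one to upgrade almost exactness to honest exactness after inverting $\varpi$. Once acyclicity of rational coverings is established, the sheaf property of the structure presheaf follows from the standard Tate acyclicity formalism.
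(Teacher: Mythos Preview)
The paper does not prove this theorem. Theorem~\ref{Scholze} is stated in the introduction as a known result attributed to Scholze, with the citation \cite{Sch}, purely as background motivation for Question~\ref{question 1} and Question~\ref{question 2}. There is therefore no proof in the paper to compare your proposal against.

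As for the content of your sketch: the overall architecture (tilting to characteristic $p$, then almost mathematics with respect to $(\varpi^{1/p^{\infty}})$, then upgrading almost exactness to genuine exactness after inverting $\varpi$) is indeed the shape of Scholze's argument in \cite{Sch}. However, your third step contains a real gap. You claim that a perfectoid algebra $\A$ in characteristic $p$ is the completed filtered colimit along Frobenius of algebras $\A_n$ each of which is a \emph{strictly Noetherian} Tate affinoid algebra, and then invoke Theorem~\ref{Huber}. There is no reason for such Noetherian approximations to exist: a general perfectoid $K^{\flat}$-algebra is perfect, but perfectness says nothing about the existence of a Noetherian ``deperfection'' underneath it. Scholze's proof does not pass through Huber's Noetherian result at all; the almost acyclicity is established directly, essentially by reducing (via tilting and approximation of the parameters $f_i,g$) to an explicit computation for rational localisations of $\A^{\flat+}\langle T_1^{1/p^{\infty}},\ldots,T_n^{1/p^{\infty}}\rangle$, where the required almost exactness can be checked by hand. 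Your final paragraph already names the correct tool; the issue is that the Noetherian reduction you propose as the bridge to it does not exist.
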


Theorem \ref{Tate} and Theorem \ref{Berkovich} are results for Berkovich spectra, and Theorem \ref{Huber} and Theorem \ref{Scholze} are results for adic spectra. Here the base field $K$ in Theorem \ref{Tate} and Theorem \ref{Berkovich} is a complete valuation field of height at most $1$, and the base field $K$ in Theorem \ref{Scholze} is a perfectoid field. The great appearance of perfectoid theory yields an expectation that there is a good class of Banach algebras (resp.\ affinoid rings) containing both reduced affinoid $K$-algebras and perfectoid $K$-algebras (resp.\ reduced strongly Noetherian Tate rings and perfectoid affinoid $K$-algebras). For example, consider the class of uniform Banach $k$-algebras (resp.\ uniform affinoid rings).

\begin{qst}
\label{question 1}
Is every uniform Banach $k$-algebra (resp.\ uniform affinoid ring) sheafy?
\end{qst}

The sheaf condition for an affinoid $k$-algebra (resp.\ a strongly Noetherian Tate ring) is proved by reducing it to the simplest case where the rational covering is given by the Weierstrass domain and the Laurent domain associated to a common single element. For a general uniform Banach $k$-algebra (resp.\ uniform affinoid ring), the sheaf condition for such a covering is easily verified, and hence if there is a good condition which implies the uniformity and which is preserved under rational localisations, then it gives a good class of uniform Banach algebras which is contained in the class of sheafy Banach algebras. In particular, a natural question arises:

\begin{qst}
\label{question 2}
Is the uniformity preserved under a rational localisation?
\end{qst}

These two questions are not ignorable for us to construct a new comprehensive theory in rigid geometry, and are considered many times by many people before the birth of perfectoid theory. Now they reappeared in the talk by Peter Scholze in ``Hot Topics: Perfectoid Spaces and their Applications'' held in MSRI in Febrary 2014, and we recognised the importance of these questions again. The answer of Question \ref{question 2} is NO both for a Berkovich spectrum and for an adic spectrum as we stated at the beginning. We will give counter-examples in Theorem \ref{not uniform 1} for a Berkovich spectrum, and in Theorem \ref{not uniform 2} and Theorem \ref{not uniform 3} for adic spectra. Furthermore, we verify in Corollary \ref{not sheafy 1} that the counter-example in Theorem \ref{not uniform 1} is not sheafy. Therefore the answer of Question \ref{question 1} is NO for a Berkovich spectrum. We remark that the result of Theorem \ref{not uniform 2} obviously includes the fact that the strong uniformity of an affinoid ring is not preserved under a rational localisation. We also verify in Theorem \ref{not sheafy} that the example in Theorem \ref{not uniform 3} is also an example of a non-sheafy uniform affinoid ring. Thus the answer of Question \ref{question 1} is NO also for an adic spectrum. On the other hand, we also have several affirmative results related to them. First, we introduce the notion of local uniformity in Definition \ref{locally uniform}. The local uniformity implies the uniformity, and is preserved under rational localisations by definition. Therefore the local uniformity is one of desired conditions stronger than the uniformity, and we verify that the local uniformity implies the sheaf condition in Theorem \ref{sheafy}. Second, we prove that every sheafy, locally perfectoid, affinoid ring is a perfectoid affinoid ring if and only if it satisfies a certain vanishing condition of the second cohomology in Theorem \ref{locally perfectoid}. This implies that in the case where the base field is of positive characteristic, a perfectoid space is an affinoid perfectoid space if and only if it is an affinoid space.

\section{Preliminaries}
\label{Preliminaries}

\subsection{Banach Spaces}
\label{Banach Spaces}

For conventions of seminorms on a $k$-vector space, see \cite{Ber} \S 1.1 or \cite{BGR} \S 2. For a seminormed $k$-vector space $(V,\n{\cdot}_{V})$, we set $V(r) \coloneqq \set{v \in V}{\n{v}_{V} \leq r}$ and $V(r-) \coloneqq \set{v \in V}{\n{v}_{V} < r}$ for each $r \in ( 0, \infty )$. We put $O_k \coloneqq k(1)$ and $m_k \coloneqq k(1-)$.

\begin{dfn}
Let $V$ be a $k$-vector space, and $W \subset V$ an $O_k$-submodule. We set $W^{\t{ac}} \coloneqq \bigcap_{r \in (1,\infty)} k(r)W$. We say that $W$ is {\it AC} if $W = W^{\t{ac}}$, and is a {\it lattice} if the multiplication $k \otimes_{O_k} W \to V$ is an isomorphism of $k$-vector spaces.
\end{dfn}

We have $W^{\t{ac} \t{ac}} = W^{\t{ac}}$, and hence $W^{\t{ac}}$ is the smallest AC $O_k$-submodule of $V$ containing $W$. If the valuation of $k$ is discrete, then $W^{\t{ac}} = W$. If $\v{k}$ is dense in $[0,\infty)$, then $W^{\t{ac}}$ coincides with $W_{*}^{\t{a}} \coloneqq \set{v \in V}{m_k v \subset W}$ introduced in \cite{GR} 2.2.2 and 2.2.9.

\begin{dfn}
An $O_k$-module $M$ is said to be {\it AS} if $\bigcap_{r \in (0,1)} k(r)M = \ens{0}$.
\end{dfn}

\begin{prp}
\label{AS}
Every $O_k$-submodule of an AS $O_k$-module is AS.
\end{prp}

\begin{proof}
The proof is straightforward by definition.
\end{proof}

\begin{crl}
An $O_k$-submodule $W$ of a $k$-vector space is AS if and only if so is $W^{\t{ac}}$.
\end{crl}

\begin{prp}
\label{bounded - AS}
Every bounded $O_k$-submodule of a normed $k$-vector space is AS.
\end{prp}

\begin{proof}
We have $\bigcap_{r \in (0,1)} k(r)W \subset \bigcap_{r \in (0,1)} V(rR) = \ens{0}$ for an $R>>0$.
\end{proof}

Let $V$ be a $k$-vector space, and $W \subset V$ a lattice (resp.\ an AS lattice). We denote by $\n{\cdot}_{V,W}$ the seminorm (resp.\ norm) $V \to [0,\infty)$ given by setting $\n{v}_{V,W} \coloneqq \inf \set{r \in (0,\infty)}{v \in k(r)W}$ for each $v \in V$. Then the closed unit ball of $V$ with respect to $\n{\cdot}_{V,W}$ coincides with $W^{\t{ac}}$, and the equality $\n{\cdot}_{V,W^{\t{ac}}} = \n{\cdot}_{V,W}$ holds.

\begin{prp}
\label{norm - unit ball}
Let $V$ be a seminormed $k$-vector space. Then $V(1)$ is an AC lattice. If $\n{\cdot}_{V}$ is a norm, then $V(1)$ is AS, and $\n{\cdot}_{V}$ is equivalent to $\n{\cdot}_{V,V(1)}$. If the closure of $\v{k} \subset [0,\infty)$ contains $\n{V}_{V}$, then $\n{\cdot}_{V}$ coincides with $\n{\cdot}_{V,V(1)}$.
\end{prp}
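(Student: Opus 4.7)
The plan is to verify the three assertions in order, each reducing to a short computation using the inclusions $k(r)V(1)\subset V(r)$ and the equality $k(\rho)V(r) = V(\rho r)$ for $\rho\in\v{k^{\times}}$ already recorded in the text.

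That $V(1)$ is a lattice is precisely the statement that $k\otimes_{k(1)}V(1)\to V$ is an isomorphism, which was already noted to follow from the non-triviality of the valuation of $k$. For adic closedness I would compute
\begin{eqnarray*}
V(1)^{\t{ac}}
= \bigcap_{r\in(1,\infty)} k(r)V(1)
\subset \bigcap_{r\in(1,\infty)} V(r)
= V(1),
\end{eqnarray*}
where the last equality uses that $\n{v}_{V}\leq r$ for every $r>1$ forces $\n{v}_{V}\leq 1$; the reverse inclusion is trivial since $V(1)\subset k(r)V(1)$ for every $r\geq 1$. When $\n{\cdot}_{V}$ is a norm, $V(1)$ is bounded by definition, so Proposition \ref{bounded - adically separated} immediately gives that $V(1)$ is adically separated.

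The main substance is the third assertion. The inequality $\n{v}_{V}\leq\n{v}_{V,V(1)}$ is free: any representation $v = cw$ with $\v{c}\leq r$ and $w\in V(1)$ forces $\n{v}_{V}\leq r$, so the infimum defining $\n{v}_{V,V(1)}$ is bounded below by $\n{v}_{V}$. For the reverse inequality I would use the hypothesis $\n{V}_{V}\subset\overline{\v{k}}$ as follows. Set $s\coloneqq\n{v}_{V}$. If $s>0$, then for every $r>s$ the hypothesis produces $c\in k^{\times}$ with $s\leq\v{c}\leq r$ (taking $\v{c} = s$ when $s\in\v{k}$, and otherwise exploiting density of $\v{k}$ near $s$, which is available because any subgroup of $(0,\infty)$ is either cyclic or dense), so $w\coloneqq v/c$ satisfies $\n{w}_{V}\leq 1$ and witnesses $v\in k(r)V(1)$; letting $r\downarrow s$ gives $\n{v}_{V,V(1)}\leq s$. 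If $s = 0$, non-triviality of the valuation furnishes $c\in k^{\times}$ of arbitrarily small $\v{c}$ with $v/c\in V(1)$, so $\n{v}_{V,V(1)} = 0$ as well. The only delicate point, and the main obstacle, is the extraction of such a $c$ from the hypothesis on $\overline{\v{k}}$; once that dichotomy is settled, the rest is bookkeeping.
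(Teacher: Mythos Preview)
Your proposal is correct and follows essentially the same route as the paper. The one noteworthy difference is in the adic closedness of $V(1)$: you argue directly via $V(1)^{\t{ac}} = \bigcap_{r>1} k(r)V(1) \subset \bigcap_{r>1} V(r) = V(1)$, whereas the paper splits into the discrete and dense valuation cases and, in the dense case, passes through the almost-mathematics identification $V(1)^{\t{ac}} = V(1)_*^{\t{a}}$ before arriving at the same conclusion; your version is cleaner. The remaining parts (adic separatedness via Proposition~\ref{bounded - adically separated}, and the two inequalities for $\n{\cdot}_{V} = \n{\cdot}_{V,V(1)}$) match the paper's argument.
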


The proof is straightforward. If $\v{k}$ is dense in $[0,\infty)$, then $V(1)$ determines $\n{\cdot}_{V}$, and there is an anti-order preserving one-to-one correspondence between the set of AC (resp.\ AS AC) lattices of $V$ and the set of seminorms (resp.\ norms) on $V$.

\begin{lmm}
\label{comensurability}
Let $V$ be a normed $k$-vector space. An open lattice $W \subset V$ is bounded if and only if $\n{\cdot}_{V}$ is equivalent to $\n{\cdot}_{V,W}$.
\end{lmm}

\begin{lmm}
\label{completion}
Let $V$ be a seminormed $k$-vector space, and $W \subset V$ an $O_k$-submodule with $W^{\t{ac}} = V(1)$. The closure $\W$ of the image of $W$ in the completion $\V$ of $V$ with respect to $\n{\cdot}_{V}$ is an AS lattice with $\W^{\t{ac}} = \V(1)$. Moreover, the canonical homomorphism $k \otimes_{O_k} \varprojlim_{0 < r < 1} W/k(r)W \to \V$ is an isomorphism of $k$-vector spaces, and its restriction gives an isomorphism $\varprojlim_{0 < r < 1} W/k(r)W \to \W$ of topological $O_k$-modules with respect to the inverse limit topology of the source and the norm topology of the target.
\end{lmm}

Lemma \ref{comensurability} (resp.\ Lemma \ref{completion}) follows from Proposition \ref{norm - unit ball} (resp.\ Proposition \ref{bounded - AS} and Proposition \ref{norm - unit ball}). For a family $(V_i)_{i \in I}$ of Banach $k$-vector spaces indexed by a set $I$, denoting by $\v{V_i}$ the underlying $k$-vector space of $V_i$ for each $i \in I$, we set $\v{\prod_{i \in I} V_i} \coloneqq \set{v = (v_i)_{i \in I} \in \prod_{i \in I} \v{V_i}}{\sup_{i \in I} \n{v_i}_{V_i} < \infty}$, and endow it with the norm $\n{\cdot}_{\prod_{i \in I} V_i} \colon v = (v_i)_{i \in I} \mapsto \sup_{i \in I} \n{v_i}_{V_i}$. Then $\prod_{i \in I} V_i \coloneqq (\v{\prod_{i \in I} V_i}, \n{\cdot}_{\prod_{i \in I} V_i})$ is a Banach $k$-vector space, and satisfies the universality of a direct product of $(V_i)_{i \in I}$ in the category of Banach $k$-vector spaces and submetric $k$-linear homomorphisms.

\subsection{Banach Algebras}
\label{Banach Algebras}

A {\it $k$-algebra} is always assumed to be a commutative associative unital non-zero $k$-algebra. Let $\A$ be a normed $k$-algebra. We denote by $\A^{\circ} \subset \A$ is a subset of power-bounded elements. Then $\A^{\circ}$ is an open $O_k$-subalgebra of $\A$ containing $\A(1)$, and depends only on the equivalence class of the norm of $\A$. In particular, we have $k^{\circ} = O_k$. We emphasise that this convention of $\A^{\circ}$ is not compatible with that in \cite{Ber} \S 2.4.

\begin{lmm}
\label{bounded ac - spectral radius}
Suppose that $\v{k}$ is dense in $[0,\infty)$. For any Banach $k$-algebra $\A$, $(\A^{\circ})^{\t{ac}}$ coincides with the subring consisting of elements $a$ with $\rho_{\A}(a) \leq 1$.
\end{lmm}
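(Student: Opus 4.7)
The plan is to exploit the description of the adic closure recalled in the paper: since $\v{k}$ is dense in $[0,\infty)$, we have $(\A^{\circ})^{\t{ac}} = (\A^{\circ})_{*}^{\t{a}}$, so an element $a \in \A$ lies in $(\A^{\circ})^{\t{ac}}$ if and only if $\epsilon a \in \A^{\circ}$ for every $\epsilon \in k(1-)$. The claim then reduces to the equivalence: $\epsilon a \in \A^{\circ}$ for all such $\epsilon$ if and only if $\rho_{\A}(a) \leq 1$. Two preparatory facts will be used throughout. First, subadditivity of $n \mapsto \log \n{a^n}_{\A}$, which follows from axiom (iv), together with Fekete's lemma yields $\rho_{\A}(a) = \lim_{n \to \infty} \n{a^n}_{\A}^{1/n}$. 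Second, a direct computation from the definition of $\rho_{\A}$ gives the homogeneity $\rho_{\A}(ca) = \v{c} \, \rho_{\A}(a)$ for every $c \in k$.

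For the forward implication, fix $\epsilon \in k(1-)$ and suppose $\epsilon a \in \A^{\circ}$. By the definition of $\A^{\circ}$ there exists $C_{\epsilon} \in (0,\infty)$ such that $\n{(\epsilon a)^n}_{\A} \leq C_{\epsilon}$ for every $n \in \N$. Taking $n$-th roots and letting $n \to \infty$ gives $\rho_{\A}(\epsilon a) \leq 1$, hence $\v{\epsilon} \, \rho_{\A}(a) \leq 1$ by homogeneity. Since $\v{k^{\times}}$ is dense in $(0,\infty)$, letting $\v{\epsilon} \to 1^{-}$ through $\v{k^{\times}}$ forces $\rho_{\A}(a) \leq 1$. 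For the reverse implication, assume $\rho_{\A}(a) \leq 1$ and fix $\epsilon \in k(1-)$. Choose $\delta > 0$ so small that $\v{\epsilon}(1+\delta) < 1$. By the limit formula for $\rho_{\A}(a)$ there is an $N \in \N$ with $\n{a^n}_{\A} \leq (1+\delta)^n$ for every $n \geq N$, whence $\n{(\epsilon a)^n}_{\A} \leq (\v{\epsilon}(1+\delta))^n$ for $n \geq N$; in particular the sequence $\{(\epsilon a)^n\}_{n \in \N}$ is bounded in $\A$, so $\epsilon a \in \A^{\circ}$.

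I do not foresee a serious obstacle. The only point that deserves care is the passage from the infimum in the definition of $\rho_{\A}$ to an honest limit, which is what provides the geometric decay estimate used in the reverse direction; this is Fekete's lemma for the subadditive sequence $\log \n{a^n}_{\A}$, with the degenerate case $\n{a^n}_{\A} = 0$ (where $a$ is nilpotent and both sides of the desired equality are trivial) handled separately. I also note that completeness of $\A$ plays no role in this argument, so the same characterisation holds for an arbitrary normed $k$-algebra.
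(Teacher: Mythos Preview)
Your proof is correct and follows essentially the same approach as the paper's. Both arguments reduce to the characterisation $(\A^{\circ})^{\t{ac}} = (\A^{\circ})_{*}^{\t{a}}$ and to the homogeneity $\rho_{\A}(\epsilon a) = \v{\epsilon}\,\rho_{\A}(a)$; the only cosmetic differences are that the paper phrases the direction $a \in (\A^{\circ})^{\t{ac}} \Rightarrow \rho_{\A}(a) \leq 1$ by contradiction rather than as a limit $\v{\epsilon} \to 1^{-}$, and for the other direction the paper invokes the implication ``$\rho_{\A}(\epsilon a) < 1 \Rightarrow \epsilon a$ is topologically nilpotent $\Rightarrow \epsilon a \in \A^{\circ}$'' as a black box, whereas you unpack it via Fekete's lemma and an explicit geometric estimate.
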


\begin{proof}
Let $a \in \A$. Suppose $\rho_{\A}(a) \leq 1$. Let $\epsilon \in m_k$. We have $\rho_{\A}(\epsilon a) = \v{\epsilon} \rho_{\A}(a) < 1$, and hence $\epsilon a$ is topologically nilpotent. Therefore $\epsilon a \in \A^{\circ}$. Thus $a \in (\A^{\circ})^{\t{ac}}$. On the other hand, suppose $a \in (\A^{\circ})^{\t{ac}}$. Assume $\rho_{\A}(a) > 1$. Since $\v{k}$ is dense in $[0,\infty)$, there is an $\epsilon \in m_k$ such that $\rho_{\A}(a)^{-1} < \v{\epsilon} < 1$. However, $\epsilon a \in \A^{\circ}$ and hence $\rho_{\A}(\epsilon a) \leq 1$ by the definition of $\rho_{\A}$. It contradicts the inequality $\rho_{\A}(\epsilon a) = \v{\epsilon} \rho_{\A}(a) > 1$.
\end{proof}

Let $\A$ be a Banach $k$-algebra. We denote by $\M(\A)$ the non-empty compact Hausdorff space consisting of bounded multiplicative seminorms on $\A$ (\cite{Ber} 1.2.1 Theorem). An $x \in \M(\A)$ is said to be {\it $k$-rational} if $\A/\set{f \in \A}{x(f) = 0} \cong k$. We denote by $\M(\A)(k) \subset \M(\A)$ the subset of $k$-rational points. We say that $\A$ is {\it uniform} if its norm is power-multiplicative, and is a {\it Banach function algebra} over $k$ if the spectral radius function $\rho_{\A} \colon \A \to [ 0, \infty ) \colon a \mapsto \inf_{n \in \N} \n{a^n}_{\A}^{1/n}$ is a complete power-multiplicative norm. By Banach's open mapping theorem (\cite{Bou} Theorem I.3.3/1), $\A$ is a Banach function algebra over $k$ if and only if $\n{\cdot}_{\A}$ is equivalent to $\rho_{\A}$ as seminorms.

\begin{lmm}
\label{bounded - spectral radius}
For any Banach function algebra $\A$ over $k$, $\A^{\circ}$ coincides with the AC subring consisting of elements $a$ with $\rho_{\A}(a) \leq 1$. Moreover, $\rho_{\A}$ is a unique complete power-multiplicative norm equivalent to $\n{\cdot}_{\A}$.
\end{lmm}

\begin{proof}
The first assertion follows from Proposition \ref{norm - unit ball}. Since $\A$ is a Banach function algebra over $k$, $\rho_{\A}$ is a complete power-multiplicative seminorm equivalent to $\n{\cdot}_{\A}$. Since  $\rho_{\A}$ depends only on the equivalence class of $\n{\cdot}_{\A}$ by \cite{Ber} 1.3.1 Theorem, a complete power-multiplicative norm on $\A$ equivalent to $\n{\cdot}_{\A}$ is unique.
\end{proof}

\begin{crl}
\label{circ - unit ball}
For any uniform Banach $k$-algebra $\A$, the equality $\A^{\circ} = \A(1)$ holds.
\end{crl}

\begin{prp}
\label{ring - ac}
Let $\A$ be a Banach $k$-algebra. For any $O_k$-subalgebra $A_0$ of $\A$, $A_0^{\t{ac}}$ is an $O_k$-subalgebra of $\A$.
\end{prp}

\begin{proof}
If the valuation of $k$ is discrete, then $A_0^{\t{ac}} = A_0$ and hence is an $O_k$-subalgebra. Suppose that $\v{k}$ is dense in $[0,\infty)$. Let $(a,b) \in A_0^{\t{ac}} \times A_0^{\t{ac}}$. Let $\epsilon \in m_k$. Since $\v{k}$ is dense in $[0,\infty)$, there is an $(\epsilon_1, \epsilon_2) \in m_k \times m_k$ with $\epsilon_1 \epsilon_2 = \epsilon$. We obtain $\epsilon(ab) = (\epsilon_1 a)(\epsilon_2 b) \in A_0$, and hence $ab \in A_0^{\t{ac}}$. Thus $A_0^{\t{ac}}$ is closed under the multiplication.
\end{proof}

\begin{prp}
\label{circ - int cl}
For any normed $k$-algebra $\A$, $\A^{\circ} \subset \A$ is integrally closed.
\end{prp}

\begin{proof}
Let $a \in \A$ be an element integral over $\A^{\circ}$, and $P(T) = \sum_{h = 0}^{n} P_h T^h \in \A^{\circ}[T]$ denote the minimum polynomial of $a$. By $P_0, \ldots, P_h \in \A^{\circ}$, they generate a bounded $O_k$-subalgebra $\A_0 \subset \A$. By the continuity of the multiplication, $M \coloneqq \sum_{h = 0}^{n-1} \A_0 a^h \subset \A$ is a bounded $\A_0$-submodule. Since $a^h P(a) = 0$, we have $a^{h+n} \in M$ by induction on $h$ for any $h \in \N$. Thus $a \in \A^{\circ}$.
\end{proof}

\begin{dfn}
Let $A$ be a $k$-algebra. An $O_k$-subalgebra $A_0 \subset A$ is said to be {\it PS} if the equality $A_0 = \set{a \in A}{a^2 \in A_0}$ holds.
\end{dfn}

\begin{prp}
\label{int cl - ac}
Let $\A$ be a Banach $k$-algebra. For any open PS (resp.\ integrally closed) $O_k$-subalgebra $\A_0 \subset \A$, so is $\A_0^{\t{ac}} \subset \A$.
\end{prp}

\begin{proof}
The subset $\A_0^{\t{ac}} \subset \A$ is an $O_k$-subalgebra by Proposition \ref{ring - ac}, and is open because $\A_0 \subset \A_0^{\t{ac}}$. If the valuation of $k$ is discrete, then $\A_0^{\t{ac}} = \A_0$. Therefore we may assume that $\v{k}$ is dense in $[0,\infty)$. Let $a \in \A$ be an element with $a^2 \in \A_0^{\t{ac}}$ (resp.\ integral over $\A_0^{\t{ac}}$). Put $P \coloneqq T^2 - a^2 \in \A_0^{\t{ac}}[T]$ (resp.\ Take a monic $P \in \A_0^{\t{ac}}[T]$ with $P(a) = 0$). Let $d \in \N \backslash \ens{0}$ be the degree of $P$. For any $\epsilon \in m_k$, we have $\epsilon^d P(\epsilon^{-1} T) \in \A_0[T]$ and hence $\epsilon a$ satisfies $(\epsilon a)^2 \in \A_0$ (resp.\ is integral over $\A_0$). Since $\A_0$ is PS (resp.\ integrally closed), we obtain $\epsilon a \in \A_0$. Thus $a \in \A_0^{\t{ac}}$. We conclude that $\A_0^{\t{ac}}$ is PS (resp.\ integrally closed).
\end{proof}

\begin{lmm}
\label{ring of definition - norm}
Let $\A$ be a $k$-algebra. For any AS lattice $\A_0 \subset \A$ which is a subring, $\n{\cdot}_{\A,\A_0}$ is a norm of $\A$ as a $k$-algebra. Moreover, if $\v{k}$ is dense in $[0,\infty)$ and $\A_0$ is PS, then $\n{\cdot}_{\A,\A_0}$ is power-multiplicative.
\end{lmm}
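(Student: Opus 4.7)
The first assertion reduces, given the preceding theory of $\n{\cdot}_{V, W}$ for adically separated lattices, to verifying the algebra axioms (iv) and (v). For submultiplicativity, fix $f, g \in \A$ and $r > \n{f}_{\A, \A_0}$, $s > \n{g}_{\A, \A_0}$; by the ultrametric inequality every element of $k(r) \A_0$ is of the form $c \alpha$ with $c \in k$, $\v{c} \leq r$, and $\alpha \in \A_0$, so I may write $f = c \alpha$ and $g = c' \beta$ with $\v{c} \leq r$, $\v{c'} \leq s$, and $\alpha, \beta \in \A_0$. The subring hypothesis gives $\alpha \beta \in \A_0$, hence $f g = c c' (\alpha \beta) \in k(r s) \A_0$, and taking the infimum over admissible $r, s$ yields (iv). For (v), $1 \in \A_0$ gives $\n{1}_{\A, \A_0} \leq 1$; iterating (iv) yields $\n{1}_{\A, \A_0} = \n{1^n}_{\A, \A_0} \leq \n{1}_{\A, \A_0}^n$ for every $n$, so any value strictly between $0$ and $1$ is incompatible with $\n{\cdot}_{\A, \A_0}$ being a norm on $\A \neq 0$, forcing $\n{1}_{\A, \A_0} \in \ens{0, 1}$.

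For the power-multiplicativity claim, the inequality $\n{f^2}_{\A, \A_0} \leq \n{f}_{\A, \A_0}^2$ is immediate from (iv), so the work lies in the reverse direction. I fix any $r > \n{f^2}_{\A, \A_0}$ and write $f^2 = c a$ with $c \in k$, $\v{c} \leq r$, $a \in \A_0$; then I pick $d \in k^{\times}$ with $\v{d}^2 \geq \v{c}$ and $\v{d}$ as close to $\sqrt{\v{c}}$ as the value group permits. The condition $\v{d^{-2} c} \leq 1$ ensures $(d^{-1} f)^2 = (d^{-2} c) a \in \A_0$, and closure of $\A_0$ under square roots in $\A$ forces $d^{-1} f \in \A_0$; consequently $f \in d \A_0$ and $\n{f}_{\A, \A_0} \leq \v{d}$. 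When $\v{k}$ is dense in $[0, \infty)$, sending $\v{d} \to \sqrt{\v{c}}$ and then $r \to \n{f^2}_{\A, \A_0}$ delivers $\n{f}_{\A, \A_0}^2 \leq \n{f^2}_{\A, \A_0}$, giving equality.

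The main obstacle is the square-root step when $\v{k}$ fails to be dense in $[0, \infty)$: then $\sqrt{\v{c}}$ need not lie in $\v{k^{\times}}$, and the minimal admissible $\v{d}$ may overshoot $\sqrt{\v{c}}$ by up to one step of the value group, so the naive limit bound yields only $\n{f}_{\A, \A_0}^2 \leq \rho^{-1} \n{f^2}_{\A, \A_0}$ for $\rho$ a generator of $\v{k^{\times}}$. Closing this gap requires either restricting to the dense case (consistent with the running hint that one may assume $k$ is highly ramified) or tightening the representation $f^2 = c a$ so that the parity of the valuation of $c$ matches what square-root closure can exploit.
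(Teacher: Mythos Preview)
Your argument for submultiplicativity and for axiom (v) is correct and matches the paper's approach; the paper argues by contradiction and splits into discrete and dense cases, but the content is the same as your direct computation $fg = (cc')(\alpha\beta) \in k(rs)\A_0$.

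For the power-multiplicative part your reasoning is again essentially the paper's: both pick a scalar whose square dominates $\n{f^2}_{\A,\A_0}$, push $(\,\cdot\,)^2$ into $\A_0$ (the paper routes through $\A_0^{\t{ac}}$, you stay in $\A_0$ directly), and invoke closure under square roots. And you are right to flag the discrete case as a genuine obstruction. The paper asserts that for every $r>\n{a^2}_0$ there is $c\in k^{\times}$ with $\n{a^2}_0 \le |c|^2 < r$ ``by the definition of $\n{\cdot}_0$'', but this fails exactly when $\n{a^2}_0$ is an odd power of the uniformiser value; the paper simply glosses over it.

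In fact the gap cannot be closed: the second assertion is \emph{false} when $|k^{\times}|$ is discrete. Take $k$ discretely valued with uniformiser $\pi$, set $\A = k[t]/(t^2-\pi)$ and $\A_0 = k(1)\oplus k(1)t$. Then $\A_0$ is an adically separated lattice and a subring; a short valuation-parity check shows it is closed under square roots in $\A$ (any $a+bt$ with $(a+bt)^2\in\A_0$ forces $|a|,|b|\le 1$, since equality $|a|^2=|b|^2|\pi|$ would put $|a|$ or $|b|$ outside $|k^{\times}|$). Yet $\n{t}_{\A,\A_0}=1$ while $\n{t^2}_{\A,\A_0}=\n{\pi}_{\A,\A_0}=|\pi|<1$, so $\n{\cdot}_{\A,\A_0}$ is not power-multiplicative. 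Thus your instinct to restrict to the dense case is not a concession but a necessity; the paper's main theorems all carry the hypothesis that $|k|$ is dense in $[0,\infty)$, so the defect does not propagate.
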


\begin{proof}
The first assertion is obvious. Put $\n{\cdot}_0 \coloneqq \n{\cdot}_{\A,\A_0}$. Suppose that $\A_0$ is PS. Then $\A_0^{\t{ac}}$ is PS by Proposition \ref{int cl - ac}. Let $a \in \A$. In order to prove the power-multiplicativity of $\n{\cdot}_0$, it suffices to verify $\n{a}^2 \leq \n{a^2}$ because the converse inequality follows from the submultiplicativity of $\n{\cdot}_0$. If $a = 0$, then $\n{a^2}_0 = \n{a}_0^2 = 0$. Suppose $a \neq 0$. For any $r \in (\n{a^2}_0,\infty)$, there is a $c \in k^{\times}$ such that $\n{a^2}_0 \leq \v{c}^2 < r$ because $\v{k}$ is dense in $[0,\infty)$. Then $\n{(c^{-1}a)^2}_0 = \v{c}^{-2} \n{a^2}_0 \leq 1$, and hence $(c^{-1}a)^2 \in \A_0^{\t{ac}}$. Since $\A_0^{\t{ac}}$ is PS, we obtain $c^{-1}a \in \A_0^{\t{ac}}$ and hence $\n{c^{-1}a}_0 \leq 1$. It implies $\n{a}_0^2 \leq \v{c}^2 < r$. Thus $\n{a^2}_0 \leq \n{a}_0^2$. We conclude that $\n{\cdot}_0$ is power-multiplicative.
\end{proof}

\begin{prp}
\label{uniform Banach}
A Banach $k$-algebra $\A$ is a Banach function algebra over $k$ if and only if $\A^{\circ}$ is bounded. Moreover, in the case where $\v{k}$ is dense in $[0,\infty)$, $\A$ is a uniform Banach $k$-algebra if and only if $\A(1) = \A^{\circ}$.
\end{prp}

\begin{proof}
The necessity implications of the two assertions follow from Lemma \ref{bounded - spectral radius}, Corollary \ref{circ - unit ball}, Proposition \ref{norm - unit ball} for $\rho_{\A}$ and $\A^{\circ}$, and Lemma \ref{comensurability} for $\n{\cdot}_{\A}$ and $\A^{\circ}$. Suppose that $\A^{\circ}$ is bounded. We prove $(\A^{\circ})^{\t{ac}} = \A^{\circ}$. Let $a \in (\A^{\circ})^{\t{ac}}$. We have $a^n \in (\A^{\circ})^{\t{ac}}$ for any $n \in \N$ by Proposition \ref{ring - ac}. Take a $c \in k^{\times}$ such that $\A^{\circ} \subset c \A(1)$. Since $(c \A(1))^{\t{ac}} = c \A(1)^{\t{ac}} = c \A(1)$, we have $(\A^{\circ})^{\t{ac}} \subset (c \A(1))^{\t{ac}} = c \A(1)$. It implies $\set{a^n}{n \in \N} \subset c \A(1)$, and hence $a \in \A^{\circ}$. We obtain $(\A^{\circ})^{\t{ac}} = \A^{\circ}$. Therefore $\A^{\circ}$ coincides with the closed unit ball with respect to $\rho_{\A}$ by Lemma \ref{bounded ac - spectral radius}, and hence $\rho_{\A}$ is a norm because $\A^{\circ}$ is AS by Proposition \ref{bounded - AS}. Since $\A^{\circ}$ is an open bounded subspace of $\A$, $\rho_{\A}$ is equivalent to $\n{\cdot}_{\A}$ by Proposition \ref{norm - unit ball} and Lemma \ref{comensurability}. We conclude that $\A$ is a Banach function algebra over $k$. Suppose that $\v{k}$ is dense in $[0,\infty)$ and $\A(1) = \A^{\circ}$. Since $\A^{\circ}$ is PS by Proposition \ref{circ - int cl}, $\n{\cdot}_{\A}$ is power-multiplicative by Proposition \ref{norm - unit ball} and Lemma \ref{ring of definition - norm}. Therefore $\A$ is uniform.
\end{proof}

\begin{crl}
\label{uniformisation}
Suppose that $\v{k}$ is dense in $[0,\infty)$. For any Banach function algebra $\A$ over $k$, $\n{\cdot}_{\A,\A^{\circ}}$ is a unique power-multiplicative norm of $\A$ equivalent to $\n{\cdot}_{\A}$.
\end{crl}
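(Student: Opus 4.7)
The plan is to apply Lemma \ref{ring of definition - norm} to $\A^{\circ}$ to obtain a power-multiplicative norm, then to identify it with $\rho_{\A}$ via Proposition \ref{norm - unit ball}, and finally to deduce uniqueness from the fact that a power-multiplicative norm coincides with its own spectral radius. First I would check that $\A^{\circ}$ is an adically separated lattice closed under square roots. It is a lattice because it contains the lattice $\A(1)$. Since $\A$ is a Banach function algebra, $\A^{\circ}$ is bounded by Proposition \ref{uniform Banach}, hence adically separated by Proposition \ref{bounded - adically separated}. Closure under square roots is immediate from Corollary \ref{circ - ac}: if $a^2 \in \A^{\circ}$, then $\rho_{\A}(a)^2 = \rho_{\A}(a^2) \leq 1$, whence $a \in \A^{\circ}$. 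Lemma \ref{ring of definition - norm} then yields the power-multiplicative $k$-algebra norm $\n{\cdot}_{\A,\A^{\circ}}$.

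Next I would identify this norm with $\rho_{\A}$ and conclude equivalence with $\n{\cdot}_{\A}$. By Corollary \ref{circ - ac}, $\A^{\circ}$ is adically closed and equal to the closed unit ball of $\A$ with respect to $\rho_{\A}$. Since $\v{k}$ is dense in $[0,\infty)$, its closure trivially contains $\rho_{\A}(\A)$, and the last assertion of Proposition \ref{norm - unit ball} applied to the normed $k$-vector space $(\A,\rho_{\A})$ gives $\rho_{\A} = \n{\cdot}_{\A,\A^{\circ}}$. Because $\A$ is a Banach function algebra, $\rho_{\A}$ is a complete norm equivalent to $\n{\cdot}_{\A}$ by the Banach open mapping theorem, as already noted in the excerpt; hence equivalence of $\n{\cdot}_{\A,\A^{\circ}}$ with $\n{\cdot}_{\A}$ follows.

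For uniqueness, I would observe that any power-multiplicative norm $\n{\cdot}'$ on $\A$ satisfies $\n{a^n}' = (\n{a}')^n$, so $\inf_n (\n{a^n}')^{1/n} = \n{a}'$; that is, a power-multiplicative norm equals its own spectral radius function. A routine comparison of $n$-th roots shows that the spectral radius depends only on the equivalence class of the underlying norm, so if $\n{\cdot}'$ is moreover equivalent to $\n{\cdot}_{\A}$, then its spectral radius coincides with $\rho_{\A}$. Hence $\n{\cdot}' = \rho_{\A} = \n{\cdot}_{\A,\A^{\circ}}$. No serious obstacle is anticipated: the only nontrivial check is that $\A^{\circ}$ is an adically separated lattice closed under square roots, which follows at once from Proposition \ref{uniform Banach} and Corollary \ref{circ - ac}, and the density hypothesis on $\v{k}$ enters only at the identification step through Proposition \ref{norm - unit ball}.
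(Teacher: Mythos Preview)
Your proof is correct and follows essentially the same approach as the paper: both establish power-multiplicativity via Lemma \ref{ring of definition - norm} using that $\A^{\circ}$ is an adically separated lattice closed under square roots (via Proposition \ref{uniform Banach} and Corollary \ref{circ - ac}), and both deduce uniqueness from the fact that a power-multiplicative norm coincides with the spectral radius. The only cosmetic differences are that the paper obtains equivalence of $\n{\cdot}_{\A,\A^{\circ}}$ with $\n{\cdot}_{\A}$ directly from $\A^{\circ}$ being open and bounded rather than through the intermediate identification with $\rho_{\A}$, and it cites \cite{Ber1} 1.3.1 for uniqueness where you give the elementary argument.
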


Let $(\A_i)_{i \in I}$ be a family of Banach $k$-algebras. Then $\prod_{i \in I} \A_i$ is a Banach $k$-algebra, and satisfies the universality of a direct product of $(\A_i)_{i \in I}$ in the category of Banach $k$-algebras and submetric $k$-algebra homomorphisms. Moreover, $\prod_{i \in I} \A_i$ is uniform if and only if so is $\A_i$ for any $i \in I$. We remark that a direct product does not preserve the class of Banach function algebras over $k$, but at least it is true that if $\prod_{i \in I} \A_i$ is a Banach function algebra over $k$, then so is $\A_i$ because the zero-extension $\A_i \hookrightarrow \prod_{i \in I} \A_i$ is an isometric multiplicative $k$-linear homomorphism for any $i \in I$.

\section{Norms on Affinoid Rings}
\label{Norms on Affinoid Rings}

In this paper, an affinoid ring $\A = (\A^{\triangleright},\A^{+})$ is assumed to satisfy that $\A^{\triangleright}$ is a complete Tate ring. We study the relation between properties of an affinoid ring and norms on the underlying topological ring in \S \ref{Affinoid Rings}. We compare the notion of a rational localisation of the Berkovich spectrum of a Banach $k$-algebra with that of the adic spectrum of an affinoid ring in \S \ref{Rational Localisations}.

\subsection{Affinoid Rings}
\label{Affinoid Rings}

For conventions of an affinoid ring, see \cite{Hub1}, \cite{Hub2}, or \cite{Hub3}. We always assume that the underlying f-adic ring of an affinoid ring is a complete Tate ring. Following the convention for a normed $k$-vector space, for an f-adic ring $A$, we denote by $A^{\circ} \subset A$ the subring of power-bounded elements. We remark that similar proof of Proposition \ref{circ - int cl} works for an f-adic ring $A$, and hence $A^{\circ} \subset A$ is integrally closed. For a topological field $K$, an {\it affinoid ring over $K$} is an affinoid ring $\A$ such that $\A^{\triangleright}$ is an f-adic ring over $K$.

\begin{dfn}
\label{norm - affinoid}
Let $\A$ be a Banach $k$-algebra. We denote by $\A^{\t{ad}+}$ the integral closure of $\A(1)$ in $\A$, and call $\A^{\t{ad}} \coloneqq (\A,\A^{\t{ad}+})$ {\it the affinoid algebra associated to $\A$}.
\end{dfn}

For a Banach $k$-algebra $\A$, we have $\A(1) \subset \A^{\circ}$ and hence $\A^{\t{ad}+}$ is contained in $\A^{\circ}$ by Proposition \ref{circ - int cl}, and $\A^{\t{ad}}$ an affinoid ring over $k^{\t{ad}}$. If $\A$ is a uniform Banach $k$-algebra, then $\A(1) = \A^{\circ}$ is integrally closed and hence $\A^{\t{ad}} = (\A,\A(1)) = (\A,\A^{\circ})$. Let $A$ be an f-adic ring over $k$. An {\it $O_k$-subalgebra of definition of $A$} is a ring of definition of $A$ which is an $O_k$-subalgebra.

\begin{prp}
\label{Banach - affinoid}
Every f-adic ring $A$ over $k$ admits an $O_k$-subalgebra of definition, and for any $O_k$-subalgebra $A_0 \subset A$ of definition, $\n{\cdot}_{A,A_0}$ is a complete norm of $A$ as a $k$-algebra which gives its original topology.
\end{prp}

\begin{proof}
The $O_k$-subalgebra generated by a ring of definition with an ideal $I$ of definition is an $O_k$-subalgebra of definition with an ideal of definition generated by $I$. Let $A_0 \subset A$ be an $O_k$-subalgebra of definition. We have $\epsilon A_0^{\t{ac}} \subset A_0 \subset A_0^{\t{ac}}$ for any $\epsilon \in k(1-)$, and hence $A_0^{\t{ac}}$ is also an $O_k$-subalgebra of definition. The topology of $A$ is given by $\n{\cdot}_{A,A_0}$ because $A_0^{\t{ac}}$ is the closed unit ball of $A$ with respect to $\n{\cdot}_{A,A_0}$, and $\n{\cdot}_{A,A_0}$ is a norm of $A$ as a $k$-algebra by Lemma \ref{ring of definition - norm}.
\end{proof}

For an f-adic ring $A$ over $k$, we define a seminorm $\rho_{A} \colon A \to [0,\infty)$ as the spectral radius function on the Banach $k$-algebra which shares the underlying $k$-algebra with $A$ and is endowed with a complete norm $\n{\cdot}_{\A}$ in Proposition \ref{Banach - affinoid}. It is independent of the choice of $\n{\cdot}_{\A}$. For an affinoid ring $\A$ over $k$, we put $\rho_{\A} \coloneqq \rho_{\A^{\triangleright}}$.

\begin{prp}
\label{ac int - ac circ}
Suppose that $\v{k}$ is dense in $[0,\infty)$. For any f-adic ring $A$ over $k$ with a ring $A^{+} \subset A$ of integral elements which is an $O_k$-subalgebra, the equality $(A^{+})^{\t{ac}} = (A^{\circ})^{\t{ac}}$ holds.
\end{prp}

\begin{proof}
The assertion immediately follows from Lemma \ref{bounded ac - spectral radius} and \cite{Ber} 1.3.1 Theorem.
\end{proof}

For an affinoid ring $\A$, we denote by $\t{Spa}(\A)$ the small set of (equivalence classes of) continuous valuations $x$ on $\A^{\triangleright}$ with $x(f) \leq 1$ for any $f \in \A^+$ (\cite{Hub1} \S 3 p.\ 467 Definition (iii)). An $x \in \t{Spa}(\A)$ is said to be {\it of height $1$} if a valuation of height $1$ represents $x$. We denote by $\t{Spa}(\A)_1 \subset \t{Spa}(\A)$ the subset of continuous valuations of height $1$. If $\A$ is an affinoid ring over $k$, then every $x \in \t{Spa}(\A)_1$ admits a unique representative $\v{\cdot}_x \colon \A^{\triangleright} \to [0,\infty)$ whose restriction on $k$ coincides with the valuation on $k$. For $x,y \in \t{Spa}(\A)_1$, the equality $\v{\cdot}_x = \v{\cdot}_y$ holds if and only if $x = y$.

\begin{prp}
\label{height 1}
Let $\A$ be an affinoid ring $\A$ over $k$, and $\n{\cdot}_0$ is a complete norm of $\A^{\triangleright}$ in Proposition \ref{Banach - affinoid}. The correspondence $x \mapsto \v{\cdot}_x$ gives a bijective map $\t{Spa}(\A)_1 \to \M(\A^{\triangleright},\n{\cdot}_0)$, and the equality $\rho_{\A}(f) = \sup_{x \in \t{Spa}(\A)_1} \v{f}_x$ holds for any $f \in \A$. In particular, $\t{Spa}(\A)_1$ depends only on $\A^{\triangleright}$.
\end{prp}

\begin{proof}
The second assertion follows from the first assertion and \cite{Ber} 1.3.1 Theorem by the definition of $\rho_{\A}$. In order to verify the first assertion, it suffices to show that a map $x \colon \A^{\triangleright} \to [0,\infty)$ is a continuous valuation with $x(\A^{+}) \subset [0,1]$ and $x |_k = \v{\cdot}$ if and only if $x \in \M(\A^{\triangleright}, \n{\cdot}_0)$. The sufficient implication immediately follows from Lemma \ref{bounded ac - spectral radius} and \cite{Ber} 1.3.1 Theorem. Suppose that $x$ is a continuous valuation with $x(\A^{+}) \subset [0,1]$ and $x |_k = \v{\cdot}$. Then by the continuity of $x$, there is an $r \in (0,\infty)$ such that $\set{f \in \A^{\triangleright}}{\n{f}_0 \leq r}$ is contained in $\set{f \in \A^{\triangleright}}{x(f) \leq 1}$. Assume that there is an $f \in \A^{\triangleright}$ such that $0 \neq \n{f}_0 < x(f)$. Since the valuation of $k$ is not trivial, there is a $\varpi \in k^{\times}$ with $\v{\varpi} < 1$. Take an $N \in \N$ with $r^{-1} \v{\varpi}^{-1} \n{f}_0^N < x(f)^N$. Let $n \in \N$ denote the least number with $r^{-1} \v{\varpi}^n \n{f}_0^N \leq 1$. We have
$x(\varpi^n f^N) = \v{\varpi}^n x(f)^N \geq r^{-1} \v{\varpi}^{n-1} \n{f}_0^N > 1$, and it contradicts the inequality $\n{\varpi^n f^N}_0 = \v{\varpi}^n \n{f^N}_0 \leq r$. Therefore we obtain $x(f) \leq \n{f}_0$ for any $f \in \A^{\triangleright}$. Thus $x \in \M(\A^{\triangleright}, \n{\cdot}_0)$.
\end{proof}

\begin{dfn}
An f-adic ring $A$ over $k$ is said to be {\it uniform} if $A^{\circ}$ is bounded. An affinoid ring $\A$ over $k$ is said to be {\it uniform} if $\A^{\triangleright}$ is uniform, and is said to be {\it strongly uniform} if $\A$ is uniform and $\A^{+} = (\A^{\triangleright})^{\circ}$.
\end{dfn}

For any uniform affinoid ring $\A$ over $k$, $\A^{+}$ is a ring of definition. Therefore every strongly uniform affinoid ring over $k^{\t{ad}}$ is isomorphic to $\A^{\t{ad}}$ for a Banach $k$-algebra.

\begin{prp}
\label{almost uniform - affinoid}
For an f-adic ring $A$ over $k$, the following are equivalent:
\begin{itemize}
\item[(i)] The f-adic ring $A$ over $k$ is uniform.
\item[(ii)] The f-adic ring $A$ over $k$ admits a norm which gives its original topology and with respect to which $A$ is a Banach function algebra over $k$.
\item[(iii)] The f-adic ring $A$ over $k$ admits a unique complete power-multiplicative norm as a $k$-algebra which gives its original topology.
\end{itemize}
In addition if $\v{k}$ is dense in $[0,\infty)$, then they are equivalent to the following:
\begin{itemize}
\item[(iv)] The f-adic ring $A$ over $k$ admits a PS ring of definition which is an $O_k$-subalgebra, and for any PS ring $A_0 \subset A$ of definition which is an $O_k$-subalgebra, $\n{\cdot}_{A,A_0}$ is a complete power-multiplicative norm of $A$ which gives its original topology.
\item[(v)] The open $O_k$-subalgebra $A^{\circ} \subset A$ is an AC PS ring of definition.
\end{itemize}
\end{prp}

\begin{proof}
The first assertion follows from Lemma \ref{bounded - spectral radius}, Proposition \ref{uniform Banach}, and Proposition \ref{Banach - affinoid}. Suppose that $\v{k}$ is dense in $[0,\infty)$. The condition (iv) implies (v) by Lemma \ref{bounded - spectral radius} and Proposition \ref{circ - int cl}. The condition (v) implies (ii) by Proposition \ref{circ - int cl} and Lemma \ref{ring of definition - norm}. We verify that the condition (i) implies (iv). The $O_k$-subalgebra $A^{\circ} \subset A$ is integrally closed and bounded by Proposition \ref{circ - int cl}, and hence is a PS ring of definition. Take a PS ring $A_0 \subset A$ of definition. which is an $O_k$-subalgebra. Then $A_0^{\t{ac}}$ is also an $O_k$-subalgebra of definition by the proof of Proposition \ref{Banach - affinoid}, and is PS by Proposition \ref{int cl - ac}. Therefore the topology of $A$ is given by $\n{\cdot}_{A,A_0}$ because $A_0^{\t{ac}}$ is the closed unit ball of $A$ with respect to $\n{\cdot}_{A,A_0}$, and $\n{\cdot}_{A,A_0}$ is a power-multiplicative norm of $A$ as a $k$-algebra by Lemma \ref{ring of definition - norm}. Thus $A$ satisfies (iv).
\end{proof}

\begin{crl}
\label{spectral radius}
For any uniform affinoid ring $\A$ over $k$, $\rho_{\A}$ is a complete power-multiplicative norm on $\A^{\triangleright}$ which gives its original topology and for which the closed unit ball of $\A^{\triangleright}$ coincides with $(\A^{\triangleright})^{\circ}$, and is unique among such seminorms. If $\v{k}$ is dense in $[0,\infty)$, then the equality $\rho_{\A} = \n{\cdot}_{\A^{\triangleright},(\A^{\triangleright})^{\circ}}$ holds.
\end{crl}

\begin{crl}
\label{circ - ac}
For any uniform affinoid ring $\A$ over $k$, $(\A^{\triangleright})^{\circ}$ is AC in $\A^{\triangleright}$.
\end{crl}

\begin{crl}
\label{su - ac}
Suppose that $\v{k}$ is dense in $[0,\infty)$. A uniform affinoid ring $\A$ over $k$ is strongly uniform if and only if $\A^{+}$ is AC in $\A^{\triangleright}$.
\end{crl}

\begin{prp}
\label{uniform - affinoid}
An affinoid ring $\A$ over $k$ is strongly uniform if and only if $\A$ is isomorphic to the affinoid algebra associated to a uniform Banach $k$-algebra.
\end{prp}

\begin{proof}
The assertion immediately follows from Proposition \ref{circ - int cl} and Corollary \ref{spectral radius}.
\end{proof}

\begin{prp}
\label{uniformisation 2}
Suppose that $\v{k}$ is dense in $[0,\infty)$. An affinoid ring $\A$ over $k^{\t{ad}}$ is uniform if and only if $(\A^{\triangleright})^{\circ} \subset \A^{\triangleright}$ is AC and $(\A^{\triangleright},(\A^{+})^{\t{ac}})$ is strongly uniform.
\end{prp}

\begin{proof}
The sufficient implication follows from Proposition \ref{int cl - ac}, and the necessary implication follows from Corollary \ref{circ - ac} and Corollary \ref{su - ac}.
\end{proof}

\subsection{Rational Localisations}
\label{Rational Localisations}

Let $n \in \N \backslash \ens{0}$ and $r = (r_1,\ldots,r_n) \in (0,\infty)^n$. We put $(1,\ldots,1) \coloneqq 1_n \in (0,\infty)^n$. For each $h = (h_1,\ldots,h_n) \in \N^n$, we set $\v{h} \coloneqq h_1 + \cdots + h_n$, $r^h \coloneqq r_1^{h_1} \cdots r_n^{h_n}$, and $\mathbb{T}^h \coloneqq T_1^{h_1} \cdots T_n^{h_n} \in \Z[T_1,\ldots,T_h]$. Let $\A$ be a Banach $k$-algebra. We denote by $\A \ens{r^{-1}\mathbb{T}}$ the $k$-subalgebra of $\A[[T_1,\ldots,T_n]]$ consisting of elements $f = \sum_{h \in \N^n} f_h \mathbb{T}^h$ with $\n{f_h} r^h \xrightarrow[]{\v{h} \to \infty} 0$, and endow it with the complete norm $\n{\cdot}_{\A \ens{r^{-1}\mathbb{T}}} \colon \A \ens{r^{-1}\mathbb{T}} \to [0,\infty) \colon \sum_{h \in \N^n} f_h \mathbb{T}^h \mapsto \sup_{h \in \N^n} \n{f_h} r^h$. We call $\n{\cdot}_{\A \ens{r^{-1}\mathbb{T}}}$ {\it the Gauss norm of radius $r$}, and the Banach $k$-algebra $(\A \ens{r^{-1}\mathbb{T}}, \n{\cdot}_{\A \ens{r^{-1}\mathbb{T}}})$ {\it the Tate algebra of radius $r$ over $\A$}. In particular, we put $\A \ens{T_1,\ldots,T_n} \coloneqq \A \ens{1_n^{-1}\mathbb{T}}$. For any Banach $k$-algebra $\B$ and any bounded $k$-algebra homomorphism $\varphi \colon \A \to \B$ satisfying $\varphi(f_0) \in \B^{\times}$ and $\n{\varphi(f_0)^{-1} \varphi(f_i)}_{\B} \leq r_i$ for any $i \in \N \cap [1,n]$, there is a unique bounded homomorphism $\widetilde{\varphi} \colon \A \ens{r^{-1}\mathbb{T}} \to \B$ such that $\widetilde{\varphi}(T_i) = \varphi(f_0)^{-1} \varphi(f_i)$ for any $i \in \N \cap [1,n]$ and the composite of the canonical embedding $\A \hookrightarrow \A \ens{r^{-1}\mathbb{T}}$ and $\widetilde{\varphi}$ coincides with $\varphi$. We call this extension property {\it the extension property of the Tate algebra}. For each $f_0 \in \A$ and $f = (f_1,\ldots,f_n) \in \A^n$ with $\sum_{i = 1}^{n} \A f_i = \A$, we denote by $\A \ens{r^{-1}f_0^{-1}f}$ the quotient of $\A \ens{r^{-1}\mathbb{T}}$ by the closure of the ideal generated by $\set{f_0 T_i - f_i}{i \in \N \cap [1,n]}$. In particular, we put $\A \ens{f_0^{-1}f} \coloneqq \A \ens{1_n^{-1}f_0^{-1}f}$, $\A \ens{r^{-1}f} \coloneqq \A \ens{r^{-1}1^{-1}f}$, $\A \ens{f} \coloneqq \A \ens{1_n^{-1}f}$, and so on. We denote by $\M(\A) \ens{r^{-1}f_0^{-1}f} \subset \M(\A)$ the closed subset of points $x$ with $\v{f_i(x)} \leq r_i \v{f_0(x)}$ for any $i \in \N \cap [1,n]$. A subset of $\M(\A)$ of such a form is called a {\it rational domain}. 

\begin{prp}
Let $\A$ be a Banach $k$-algebra. For each $f_0 \in \A$ and $f = (f_1,\ldots,f_n) \in \A^n$ with $\sum_{i = 1}^{n} \A f_i = \A$, the map $\M(\A \ens{r^{-1}f_0^{-1}f}) \to \M(\A)$ associated to the canonical homomorphism $\A \to \A \ens{r^{-1}f_0^{-1}f}$ is a homeomorphism onto $\M(\A) \ens{r^{-1}f_0^{-1}f}$.
\end{prp}

\begin{proof}
Let $\varphi \colon \M(\A \ens{r^{-1}f_0^{-1}f}) \to \M(\A)$ denote the given continuous map, which is injective because $f_0^{-1}f_1, \ldots, f_0^{-1}f_n$ generates a dense $\A$-subalgebra of $\A \ens{r^{-1}f_0^{-1}f}$. Since Berkovich spectra are compact and Hausdorff, $\varphi$ is a homeomorphism onto the closed image. Therefore it suffices to verify that the image of $\varphi$ coincides with $\M(\A) \ens{r^{-1}f_0^{-1}f}$. Let $x \in \M(\A \ens{r^{-1}f_0^{-1}f})$. For any $i \in \N \cap [1,n]$, we have $\v{f_i(\varphi(x))} = \v{f_i(x)} = \v{f_0(x)} \ \v{(f_0^{-1}f_i)(x)} = \v{f_0(\varphi(x))} \ \v{(f_0^{-1}f_i)(x)} \leq \v{f_0(\varphi(x))} \n{f_0^{-1}f_i}_{\A \ens{r^{-1}f_0^{-1}f}} \leq r_i \v{f_0(\varphi(x))}$. Therefore $\varphi(x) \in \M(\A) \ens{r^{-1}f_0^{-1}f}$.

\vspace{0.2in}
Let $y \in \M(\A) \ens{r^{-1}f_0^{-1}f}$, and $\eta \colon \A \to \kappa(y)$ be the canonical bounded homomorphism to the extension $\kappa(y)/k$ of valuation fields corresponding to $y$ by \cite{Ber} 1.2.2 Remark. Since $\v{f_i(y)} \leq r_i \v{f_0(y)}$ for any $i \in \N \cap [1,n]$, it extends to a bounded homomorphism $\A \ens{r^{-1}\mathbb{T}} \to \kappa(y)$ sending $T_i$ to $\eta(f_0)^{-1} \eta(f_i)$ for each $i \in \N \cap [1,n]$ by the extension property of the Tate algebra. The kernel obviously contains $f_0T_i-f_i$ for each $i \in \N \cap [1,n]$, and hence it induces a bounded $k$-algebra homomorphism $\A \ens{r^{-1}f_0^{-1}f} \to \kappa(y)$. It corresponds to an $x \in \M(\A \ens{r^{-1}f_0^{-1}f})$. Since the restriction of $x$ on the image of $\A$ coincides with $y$, we obtain $y = \varphi(x)$. Thus the image of $\varphi$ coincides with $\M(\A) \ens{r^{-1}f_0^{-1}f}$.
\end{proof}

\begin{lmm}
\label{Nullstellensatz}
Let $\A$ be a Banach $k$-algebra. For an $a \in \A$, $\A \ens{a} = 0$ if and only if $a \in \A^{\times}$ and $\v{a^{-1}(x)} < 1$ for any $x \in \M(\A)$.
\end{lmm}

\begin{proof}
The assertion follows from \cite{Ber} 1.2.1 Theorem and \cite{Ber} 1.2.4 Corollary.
\end{proof}

The underlying $k$-algebra of the Banach $k$-algebra $k \ens{r^{-1}\mathbb{T}}$ is Noetherian, and every ideal of it is closed by \cite{Ber} 2.1.3 Proposition. If $\A$ is isomorphic to the quotient of such a Banach $k$-algebra, the isomorphism class of $\A \ens{f_0^{-1}f}$ for an $f_0 \in \A$ and $f = (f_1,\ldots,f_n) \in \A^n$ with $\sum_{i = 1}^{n} \A f_i = \A$ depends only on the rational domain $\M(\A) \ens{f_0^{-1}f}$, and hence is denoted by $\Opn_{\M(\A)}(\M(\A) \ens{f_0^{-1}f})$. The correspondence $U \rightsquigarrow \Opn_{\M(\A)}(U)$ gives a presheaf of complete topological $k$-algebras on the Grothendieck topology generated by rational subsets of $\M(\A)$. We remark that there is no canonical norm of $\Opn_{\M(\A)}(U)$ when $\A$ is not reduced, and hence we have to forget the norm on each section. For more details of rational localisations of such $\A$, see \cite{Ber} \S 2.2. On the other hand, if $\A$ is not isomorphic to a quotient of the Banach $k$-algebra of the form $k \ens{r^{-1}\mathbb{T}}$, the isomorphism class of $\A \ens{f_0^{-1}f}$ for an $f_0 \in \A$ and an $f = (f_1,\ldots,f_n) \in \A^n$ with $\sum_{i = 1}^{n} \A f_i = \A$ depends on the presentation $(n,(f_0,f))$ of $\M(\A) \ens{f_0^{-1}f}$ in general, and hence the presheaf $\Opn_{\M(\A)}$ is not well-defined.

\vspace{0.2in}
We recall the relation between rational localisations of a Berkovich spectrum and an adic spectrum. Let $f_0 \in \A$ and $f =(f_1,\ldots,f_n) \in \A^n$ with $\sum_{i = 1}^{n} \A f_i = \A$. We define $\A(1) \langle f_0^{-1}f \rangle \coloneqq \varprojlim_{0 < r < 1} \A(1)[f_0^{-1}f_1, \ldots, f_0^{-1}f_n]/\A(r)[f_0^{-1}f_1, \ldots, f_0^{-1}f_n]$ and $\A \langle f_0^{-1}f \rangle \coloneqq k \otimes_{O_k} (\A(1) \langle f_0^{-1}f \rangle)$, where $\A(r)[f_0^{-1}f_1, \ldots, f_0^{-1}f_n] \subset \A[f_0^{-1}]$ denotes the smallest $O_k$-submodule closed under the addition and the multiplication containing the image of $\A(r)$ and $\ens{f_0^{-1}f_1, \ldots, f_0^{-1}f_n}$ for each $r \in (0,1)$. We have $\A(1) \langle f_0^{-1}f \rangle \cong \varprojlim_{0 < r < 1} \A(1)[f_0^{-1}f_1, \ldots, f_0^{-1}f_n]/k(r)\A(1)[f_0^{-1}f_1, \ldots, f_0^{-1}f_n]$ as topological $O_k$-modules by Lemma \ref{completion} for the seminorm on $\A[f_0^{-1}] = \A[f_0^{-1}f_1,\ldots,f_0^{-1}f_n]$ induced by the Gauss norm of radius $1_n$. Therefore its underlying $O_k$-module is torsion-free and hence is flat. It implies that the canonical homomorphism $\A(1) \langle f_0^{-1}f \rangle \to \A \langle f_0^{-1}f \rangle$ is injective, and we identify $\A(1) \langle f_0^{-1}f \rangle$ with its image. We equip $\A \langle f_0^{-1}f \rangle$ with the complete norm $\n{\cdot}_{\A \langle f_0^{-1}f \rangle} \coloneqq \n{\cdot}_{\A \langle f_0^{-1}f \rangle, \A(1) \langle f_0^{-1}f \rangle}$ as a $k$-algebra (Lemma \ref{ring of definition - norm}).

\begin{prp}
\label{unit ball}
Let $\A$ be a Banach $k$-algebra. For any $f_0 \in \A$ and $f =(f_1, \ldots, f_n) \in \A^n$ with $\sum_{i = 1}^{n} \A f_i = \A$, the equality $\A \langle f_0^{-1}f \rangle (1) = \A(1) \langle f_0^{-1}f \rangle^{\t{ac}}$ holds.
\end{prp}

\begin{proof}
The assertion follows from Lemma \ref{completion} for the induced seminorm on $\A[f_0^{-1}]$.
\end{proof}

\begin{prp}
\label{presentation}
Let $\A$ be a Banach $k$-algebra. For any $f_0 \in \A$ and $f = (f_1,\ldots,f_n) \in \A^n$ with $\sum_{i = 1}^{n} \A f_i = \A$, the natural homomorphism $\A \ens{T_1, \ldots, T_n} \to \A \langle f_0^{-1}f \rangle$ sending $T_i$ to the image of $f_0^{-1}f_i$ for each $i \in \N \cap [1,n]$ is an admissible surjective homomorphism, and its kernel coincides with the closure $(f_0T_1 - f_1, \ldots, f_0T_n - f_n)\hat{}$ of the ideal $(f_0T_1 - f_1, \ldots, f_0T_n - f_n) \subset \A \ens{\mathbb{T}}$ generated by $\ens{f_0T_1 - f_1, \ldots, f_0T_n - f_n}$. Moreover, the induced homomorphism $\A \ens{f_0^{-1}f} \to \A \langle f_0^{-1}f \rangle$ is an isometric isomorphism.
\end{prp}

\begin{proof}
Let $\iota \colon \A \to \A \langle f_0^{-1}f \rangle$ denote the canonical bounded homomorphism. By $\sum_{i = 1}^{n} \A f_i = \A$, there is a $(g_1,\ldots,g_n) \in \A^n$ with $\sum_{i = 1}^{n} g_i f_i = 1$. The equality $\iota(f_0) \sum_{i = 1}^{n} \iota(g_i) \iota(f_0^{-1}f_i) = \sum_{i = 1}^{n} \iota(g_i f_i) = \iota(1) = 1$ guarantees that $\iota(f_0)$ is invertible in $\A \langle f_0^{-1}f \rangle$ by \cite{Ber} Corollary 1.2.4. Therefore $\iota$ extends to a unique bounded homomorphism $\widetilde{\iota} \colon \A \ens{T_1, \ldots, T_n} \to \A \langle f_0^{-1}f \rangle$ sending $T_i$ to $\iota(f_0)^{-1} \iota(f_i)$ for each $i \in \N \cap [1,n]$ because $\n{\iota(f_0)^{-1} \iota(f_i)}_{\A \langle f_0^{-1}f \rangle} \leq 1$ for each $i \in \N \cap [1,n]$. For any $g \in \A(1) \langle f_0^{-1}f \rangle$, there is a sequence $(g_j)_{j \in \N} \in \prod_{j = 0}^{\infty} \A(2^{-j})[f_0^{-1}f_1, \ldots, f_0^{-1}f_n]$ such that $g = \sum_{j = 0}^{\infty} \iota(g_j)$ by the definition of the topology of $\A \langle f_0^{-1}f \rangle$. Take a lift $G_j \in \A[T_1, \ldots, T_n]$ of $g_j$ whose coefficients lie in $\A(2^{-j})$ for each $j \in \N$. By the completeness of $\A$, $G \coloneqq \sum_{j = 0}^{\infty} G_j$ converges in $\A \ens{T_1, \ldots, T_n}$. By the continuity of $\widetilde{\iota}$, we have $\widetilde{\iota}(G) = \sum_{j = 0}^{\infty} \widetilde{\iota}(G_j) = \sum_{j = 0}^{\infty} \iota(g_j) = g$. Therefore $\A(1) \langle f_0^{-1}f \rangle \subset \widetilde{\iota}(\A \ens{T_1, \ldots, T_n}(1))$, and $\widetilde{\iota}$ is surjective because $\A \langle f_0^{-1}f \rangle (1) = (\A(1) \langle f_0^{-1}f \rangle)^{\t{ac}}$. Since $\widetilde{\iota}(\A(1)[T_1, \ldots, T_n]) = \iota(\A(1)[f_0^{-1}f_1, \ldots, f_0^{-1}f_n])$ is dense in $\A(1) \langle f_0^{-1}f \rangle$, the image of $\A \ens{T_1, \ldots, T_n}(1)$ by $\widetilde{\iota}$ coincides with $\A(1) \langle f_0^{-1}f \rangle$ by Banach's open mapping theorem (\cite{Bou} Theorem I.3.3/1). The equality $\widetilde{\iota}(f_0T_i - f_i) = \iota(f_0) \widetilde{\iota}(T_i) - \iota(f_i) = 0$ guarantees that $\ker \widetilde{\iota}$ contains $(f_0T_1 - f_1, \ldots, f_0T_n - f_n)\hat{}$. Moreover, $\widetilde{\iota}$ is submetric by $\widetilde{\iota}(\A(1)[T_1, \ldots, T_n]) = \iota(\A(1))[\iota(f_0)^{-1}\iota(f_1), \ldots, \iota(f_0)^{-1}\iota(f_n)] \subset \A(1) \langle f_0^{-1}f \rangle \subset \A \langle f_0^{-1}f \rangle (1)$. Therefore $\widetilde{\iota}$ induces a surjective submetric homomorphism $\iota \ens{f_0^{-1}f} \colon \A \ens{f_0^{-1}f} \to \A \langle f_0^{-1}f \rangle$.

\vspace{0.2in}
We prove that $\iota \ens{f_0^{-1}f}$ is isometric. Denote by $\iota[f_0^{-1}] \colon \A[f_0^{-1}] \to \A \langle f_0^{-1}f \rangle$ the composite of the canonical homomorphism $\A[f_0^{-1}] \to \A \ens{f_0^{-1}f}$ and $\iota \ens{f_0^{-1}f}$. Let $F = \sum_{h \in \N^n} F_h \mathbb{T}^h \in \A \ens{T_1, \ldots, T_n}$. For any $r > \n{\widetilde{\iota}(F)}_{\A \langle f_0^{-1}f \rangle}$, we have $\widetilde{\iota}(F) \in k(r) \A(1) \langle f_0^{-1}f \rangle$ by the definition of $\n{\cdot}_{\A \langle f_0^{-1}f \rangle}$. Take an $N \in \N$ such that $\n{F_h}_{\A} < r$ for any $h \in \N^n$ with $\v{h} > N$. Set $G \coloneqq \sum_{h \in \N^n \cap [1,N]^n} F_h \mathbb{T}^h \in \A[T_1, \ldots, T_n]$. Since $\widetilde{\iota}$ is submetric, we have $\widetilde{\iota}(F - G) \in \A \langle f_0^{-1}f \rangle (r-) \subset k(r) \A(1) \langle f_0^{-1}f \rangle$ and $\iota[f_0^{-1}](G(f_0^{-1}f_1, \ldots, f_0^{-1}f_n)) = \widetilde{\iota}(G) = \widetilde{\iota}(F) - \widetilde{\iota}(F - G) \in k(r) \A(1) \langle f_0^{-1}f \rangle$. Therefore we obtain $G(f_0^{-1}f_1, \ldots, f_0^{-1}f_n) \in \A(r)[f_0^{-1}f_1, \ldots, f_0^{-1}f_n]$ by the definition of $\iota$, and there is an $H \in \A[T_1, \ldots, T_n]$ such that $H(f_0^{-1}f_1, \ldots, f_0^{-1}f_n) = G(f_0^{-1}f_1, \ldots, f_0^{-1}f_n)$ and each coefficient of $H$ lies in $\A(r)$. We get
\begin{eqnarray*}
  F & = & (F - G) + (G - G(f_0^{-1}f_1, \ldots, f_0^{-1}f_n)) - (H - H(f_0^{-1}f_1, \ldots, f_0^{-1}f_n)) + H \\
  & \in & \A \Ens{T_1, \ldots, T_n}(r) + (f_0T_1 - f_1, \ldots, f_0T_n - f_n),
\end{eqnarray*}
and hence $\n{F + (f_0T_1 - f_1, \ldots, f_0T_n - f_n)\hat{}}_{\A \ens{f_0^{-1}f}} \leq r$. Since $r$ is an arbitrary real number with $r > \n{\widetilde{\iota}(F)}_{\A \langle f_0^{-1}f \rangle}$, it implies $\n{F + (f_0T_1 - f_1, \ldots, f_0T_n - f_n)\hat{}}_{\A \ens{f_0^{-1}f}} \leq \n{\widetilde{\iota}(F)}_{\A \langle f_0^{-1}f \rangle}$. Thus $\iota \ens{f_0^{-1}f}$ is isometric.
\end{proof}

Let $\A$ be an affinoid ring. A {\it rational domain of $\t{Spa}(\A)$} is a subset of $\t{Spa}(\A)$ of the form $\t{Spa}(\A)(f_0^{-1}f) \coloneqq \set{x \in \t{Spa}(\A)}{\v{f_i(x)} \leq \v{f_0(x)}, {}^{\forall} i \in \N \cap [1,n]}$ for some $n \in \N$, $f_0 \in \A^{\triangleright}$, and $f = (f_1, \ldots, f_n) \in (\A^{\triangleright})^n$ with $\sum_{i = 1}^{n} \A^{\triangleright} f_i = \A^{\triangleright}$. We adopt obvious conventions such as $\t{Spa}(\A)(f)$. For a rational subset $U \subset \t{Spa}(\A)$, an affinoid ring $(\Opn_{\t{Spa}(\A)}(U), \Opn_{\t{Spa}(\A)}^{+}(U))$ over $\A$ is defined by a certain universality (\cite{Hub2} (1.2)) so that the structure morphism $\A \to (\Opn_{\t{Spa}(\A)}(U), \Opn_{\t{Spa}(\A)}^{+}(U))$ induces an identification $\t{Spa}(\Opn_{\t{Spa}(\A)}(U), \Opn_{\t{Spa}(\A)}^{+}(U)) \cong U \subset \t{Spa}(\A)$, and is called {\it the rational localisation of $\A$ on $U$}. For any presentation $U = \t{Spa}(\A)(f_0^{-1}f)$ by an $(n,f_0,f)$, the universality guarantees that $\Opn_{\t{Spa}(\A)}(U)$ is isomorphic to $\A \langle f_0^{-1}f \rangle$ over $\A$. Moreover, $\Opn_{\t{Spa}(\A)}$ and $\Opn_{\t{Spa}(\A)}^{+}$ give presheaves of topological rings on the topology of $\t{Spa}(\A)$ generated by rational domains.

\begin{prp}
\label{comparison}
Let $\A$ be an affinoid ring over $k$. Put $X \coloneqq \t{Spa}(\A)$. Let $U \subset X$ be a rational domain, and $\n{\cdot}_{\A}$ a complete norm on $\A^{\triangleright}$ as a $k$-algebra which gives its original topology. For any $n \in \N \backslash \ens{0}$, $f_0 \in \A$, and $f = (f_1,\ldots,f_n) \in \A^n$ with $\sum_{i = 1}^{n} \A f_i = \A$ and $U = X(f_0^{-1}f)$, there is a natural isomorphism $\A^{\triangleright} \ens{f_0^{-1}f} \to \Opn_{X}(U)$ as topological $k$-algebras.
\end{prp}

\begin{proof}
The assertion is an immediate consequence of Proposition \ref{presentation}.
\end{proof}

\begin{prp}
\label{presentation 2}
Let $\A$ be a Banach function algebra over $k$. For any $f \in \A$, $(T - f) \subset \A \ens{T}$ is a closed ideal, and in addition if $\A$ is uniform, then the canonical bounded homomorphism $\A \ens{T}/(T - f) \to \A \ens{f}$ is an isometric isomorphism.
\end{prp}

\begin{proof}
We have only to verify that $(T - f)$ is a closed ideal. Since $\A$ is Banach function algebra over $k$, the uniformisation preserves the underlying topological $k$-algebra. Therefore we may assume that $\A$ is uniform. Let $F \in \A \ens{T}$. For each $x \in \M(\A)$, denote by $\tilde{x} \in \M(\A \ens{T})$ the point given by setting $\v{G(\tilde{x})} = \sup_{i = 0}^{\infty} \v{G_i(\tilde{x})}$ for each $G = \sum_{i = 0}^{\infty} G_i \mathbb{T}^i \in \A \ens{T}$. We have $\v{(T - f)(\tilde{x})} = \max \ens{1, \v{f(x)}} \geq 1$, and hence $\v{F(\tilde{x})} \leq \v{(T - f)(\tilde{x})} \ \v{F(\tilde{x})} = \v{(T - f)F(\tilde{x})}$ for any $x \in \M(\A)$. The uniformity of $\A$ implies $\n{F}_{\A \ens{T}} \leq \n{(T - f)F}_{\A \ens{T}} \leq \n{T - f}_{\A \ens{T}} \ \n{F}_{\A \ens{T}}$ by \cite{Ber} 1.3.1 Theorem, because $\tilde{x}$ is maximal among the preimage of $x$. Thus the multiplication $(T - f) \times \colon \A \ens{T} \to \A \ens{T}$ is an admissible injective homomorphism, and its image $(T - f)$ is closed.
\end{proof}

\begin{prp}
\label{presentation 3}
Let $\A$ be a Banach function algebra over $k$. For any $f \in \A$, $(fT - 1) \subset \A \ens{T}$ is a closed ideal, and in addition if $\A$ is uniform, then the canonical bounded homomorphism $\A \ens{T}/(fT - 1) \to \A \ens{f^{-1}}$ is an isometric isomorphism.
\end{prp}

\begin{proof}
The assertion is verified in a similar way to Proposition \ref{presentation 2}.
\end{proof}

As a consequence, if $\A$ is a Banach function algebra over $k$, then the ideal $(T_1T_2 - 1) \subset \A \ens{T_1,T_2} \cong \A \ens{T_1} \ens{T_2}$ is closed. Therefore the bounded homomorphism $\A \ens{T_1,T_2}/(T_1T_2 - 1) \to \A \ens{T,T^{-1}}$ sending the image of $T_1$ to $T$ is an isometric isomorphism. In fact, this presentation does not need the assumption of the uniformity of $\A$ because the multiplication of $T_1T_2 - 1 \in \A \ens{T_1,T_2}$ is an isometry by the definition of the Gauss norm. It is easy to verify that $\A \ens{T,T^{-1}}$ is isometrically isomorphic to the Banach $k$-algebra of convergent Laurent series $F = \sum_{i \in \Z} F_i T^i$ with $\lim_{\v{i} \to \infty} \n{F_i}_{\A} = 0$. Therefore if $\A$ is a Banach function algebra over $k$ (resp.\ a uniform Banach $k$-algebra), then so is $\A \ens{T,T^{-1}}$. By the definition of $\A \ens{T,T^{-1}}$, the $\check{\m{C}}$ech complex
\begin{eqnarray*}
\begin{array}{ccccccccc}
  0 & \to & \A & \to & \A \Ens{T_1} \times \A \Ens{T_2} & \to & \A \Ens{T,T^{-1}} & \to & 0 \\
  && f & \mapsto & (f,f) &&&& \\
  &&&& (F(T_1),G(T_2)) & \mapsto & F(T) - G(T^{-1}) &&
\end{array}
\end{eqnarray*}
is an admissible exact sequence of Banach $k$-vector spaces.

\begin{prp}
\label{presentation 4}
Let $\A$ be a Banach function algebra over $k$. For any $f \in \A$, the kernel of the canonical homomorphism $\A \ens{T,T^{-1}} \to \A \ens{f,f^{-1}}$ sending $T$ to the image of $f$ coincides with $(T - f) \subset \A \ens{T,T^{-1}}$. In addition if $\A$ is uniform, the canonical bounded homomorphism $\A \ens{T,T^{-1}}/(T - f) \to \A \ens{f,f^{-1}}$ is an isometric isomorphism.
\end{prp}

\begin{proof}
The assertion immediately follows from Proposition \ref{presentation 2} applied to the composite of the canonical projection $\A \ens{T_1,T_2} \twoheadrightarrow \A \ens{T,T^{-1}}$ and the given homomorphism $\A \ens{T,T^{-1}} \twoheadrightarrow \A \ens{f,f^{-1}}$.
\end{proof}

\begin{crl}
\label{acyclicity}
Let $\A$ be a Banach function algebra over $k$. For any $f \in \A$, the $\check{\m{C}}$ech complex $0 \to \A \to \A \ens{f} \times \A \ens{f^{-1}} \to \A \ens{f,f^{-1}} \to 0$ is an admissible exact sequence.
\end{crl}

\begin{proof}
Consider the commutative diagram
\begin{eqnarray*}
\begin{CD}
    @.   0  @.   0                                 @.   0                 @. \\
  @.     @VVV    @VVV                                   @VVV \\
  0 @>>> 0  @>>> (T_1 - f) \times (fT_2 - 1)       @>>> (T - f)           @>>> 0 \\
  @.     @VVV    @VVV                                   @VVV \\
  0 @>>> \A @>>> \A \Ens{T_1} \times \A \Ens{T_2}  @>>> \A \Ens{T,T^{-1}} @>>> 0 \\
  @.     @VVV    @VVV                                   @VVV \\
  0 @>>> \A @>>> \A \Ens{f} \times \A \Ens{f^{-1}} @>>> \A \Ens{f,f^{-1}} @>>> 0 \\
  @.     @VVV    @VVV                                   @VVV \\
    @.   0  @.   0                                 @.   0                 @.
\end{CD}
\end{eqnarray*}
whose arrows are the obvious ones. By Proposition \ref{presentation 2}, Proposition \ref{presentation 3}, and Proposition \ref{presentation 4}, each column is exact. The central row is exact as is mentioned after Proposition \ref{presentation 3}. The exactness of the top row is verified in a purely algebraic elementary calculation. Therefore the exactness of the bottom row follows from the snake lemma. Banach's open mapping theorem (\cite{Bou} Theorem I.3.3/1) guarantees the admissibility of it.
\end{proof}

\begin{lmm}
\label{transitivity}
Let $\A$ be a Banach $k$-algebra. For any $n \in \N \backslash \ens{0}$ and $f = (f_1,\ldots,f_n) \in \A^n$, the bounded homomorphism $\A \ens{T_1,\ldots,T_n} \to \A \ens{f_1} \cdots \ens{f_n}$ sending $T_i$ to $f_i$ for each $i \in \N \cap [1,n]$ induces an isometric isomorphism $\A \ens{f} \to \A \ens{f_1} \cdots \ens{f_n}$.
\end{lmm}

\begin{proof}
To begin with, we verify that for any closed ideal $I \subset \A$, the canonical bounded homomorphism $\A \ens{T}/I \ens{T} \to \A/I \ens{T}$ is an isometric isomorphism, where $I \ens{T} \subset \A \ens{T}$ is the closed ideal generated by the image of $I$. Since the canonical projection $\A \twoheadrightarrow \A/I \ens{T}$ is submetric, so is the induced homomorphism $\A \ens{T} \to \A/I \ens{T}$. Its kernel is the closed ideal consisting of power series $F = \sum_{i = 0}^{\infty} F_i T^i \in \A \ens{T}$ with $F_i \in I$ for any $i \in \N$, and coincides with $I \ens{T}$. Therefore $\A \ens{T}/I \ens{T} \to \A/I \ens{T}$ is injective. Let $\overline{F} = \sum_{i = 0}^{\infty} \overline{F}_i T^i \in \A/I \ens{T}$. Take an arbitrary $\epsilon \in (0,1)$. Let $F_1, F_2, \ldots \in \A$ be lifts of $\overline{F}_1, \overline{F}_2, \ldots$ such that $\n{F_i}_{\A} \leq \n{\overline{F}_i}_{\A/I} + \epsilon^i$ for any $i \in \N$. Then $0 \leq \lim_{i \to \infty} \n{F_i}_{\A} \leq \lim_{i \to \infty} \n{\overline{F}_i}_{\A/I} + \epsilon^i = 0$, and hence $F \coloneqq \sum_{i = 0} F_i T^i \in \A[[T]]$ lies in $\A \ens{T}$. The image of $F$ in $\A/I \ens{T}$ is $\overline{F}$, and $\n{F}_{\A \ens{T}} \leq \n{\overline{F}}_{\A/I \ens{T}} + \epsilon$. Thus we obtain the surjectivity of $\A \ens{T}/I \ens{T} \to \A/I \ens{T}$, and the estimation of the norms above guarantees that it is an isometry. Applying the result above to $\A \ens{f} \to \A \ens{f_1} \cdots \ens{f_n}$ repetitively, we get
\begin{eqnarray*}
  & & \A \Ens{f_1, \ldots, f_n} = \A \Ens{T_1, \ldots, T_n}/(T_1 - f_1, \ldots, T_n - f_n)\hat{} \\
  & \cong & \left( \A \Ens{T_1} \cdots \Ens{T_n}/(T_1 - f_1)\hat{} \right)/(T_2 - f_2, \ldots, T_n - f_n)\hat{} \\
  & \cong & \left( \A \Ens{T_1} \cdots \Ens{T_{n-1}}/(T_1 - f_1)\hat{} \right) \Ens{T_n} /(T_2 - f_2, \ldots, T_n - f_n)\hat{} \\
  & \cong & \cdots \cong \left( \A \Ens{T_1}/(T_1 - f_1)\hat{} \right) \Ens{T_2} \cdots \Ens{T_n}/(T_2 - f_2, \ldots, T_n - f_n)\hat{} \\
  & = & \A \Ens{f_1} \Ens{T_2} \cdots \Ens{T_n}/(T_2 - f_2, \ldots, T_n - f_n)\hat{} \cong \cdots \cong \A \Ens{f_1} \cdots \Ens{f_n},
\end{eqnarray*}
where every isomorphism is an isometry.
\end{proof}

\begin{dfn}
\label{special}
Let $\A$ be an affinoid ring over $k$. A rational covering $\U$ of $X \coloneqq \t{Spa}(\A)$ is said to be {\it cyclic} (resp.\ {\it strongly cyclic}) if there are some $n \in \N \backslash \ens{0}$ and $f = (f_1, \ldots, f_n) \in \A^n$ (resp.\ $f = (f_1, \ldots, f_n) \in (\A^{\times})^n$) such that $\sum_{i = 1}^{n} \A^{\triangleright} f_i = \A^{\triangleright}$ and $\U \subset \set{X(f_i^{-1}f) \hookrightarrow X}{i \in \N \cap [1,n]}$, and is said to be {\it special} if there are an $n \in \N \backslash \ens{0}$ and $f = (f_1, \ldots, f_n) \in (\A^{\times})^n$ such that $\U \subset \set{X(f^{\sigma}) \hookrightarrow X}{\sigma \in \ens{1, -1}^n}$.
\end{dfn}

\begin{lmm}
\label{refinement}
Let $\A$ be an affinoid ring over $k$, and put $X \coloneqq \t{Spa}(\A)$
\begin{itemize}
\item[(i)] Every rational covering $\U$ of $X$ admits a cyclic covering of $X$ refining $\U$.
\item[(ii)] Every cyclic covering $\V$ of $X$ admits a special covering $\W$ of $X$ such that $\V |_W \coloneqq \set{V \cap W \hookrightarrow W}{(V \hookrightarrow X) \in \V}$ is strongly cyclic for any $(W \hookrightarrow X) \in \W$.
\item[(iii)] Every strongly cyclic covering $\W$ of $X$ admits a special covering of $X$ refining $\W$.
\end{itemize}
\end{lmm}

This is a counter part of \cite{BGR} Lemma 8.2.2/2, Lemma 8.2.2/3, and Lemma 8.2.2/4 for an adic space, and the following is just imitating the proofs of them.

\begin{proof}
Let $\U = \ens{U_1, \ldots, U_n \hookrightarrow X}$ be a rational covering. Take an $m_i \in \N \backslash \ens{0}$, an $f_{i,0} \in \A^{\triangleright}$, and $f_i = (f_{i,1},\ldots,f_{i,m_i}) \in (\A^{\triangleright})^n$ with $\sum_{j = 1}^{m_i} \A^{\triangleright} f_{i,j} = \A^{\triangleright}$ and $U_i = X(f_{i,0}^{-1}f_i)$ for each $i \in \N \cap [1,n]$. For each $J = (J_1, \ldots, J_n) \in \N^n \cap \prod_{i = 1}^{n} [0,m_i]$, put $f_J \coloneqq \prod_{i = 1}^{n} f_{i,J_i} \in \A^{\triangleright}$ and set $V_J \coloneqq \set{x \in X}{\v{f_{J'}(x)} \leq \v{f_J(x)}, {}^{\forall} J' \in \N^n \cap \prod_{i = 1}^{n} [0,m_i]}$. Then $\V \coloneqq \set{V_J}{J \in \N^n \cap \prod_{i = 1}^{n} [0,m_i]}$ is a cyclic covering refining $\U$.

\vspace{0.2in}
Let $\V$ be a cyclic covering of $X$. Replacing $\V$ by a rational refinement $\ens{V_1, \ldots, V_n \hookrightarrow X}$ obtained by adding suitable rational domains of $X$ to $\V$, we may assume that there is a $g =(g_1,\ldots,g_n) \in (\A^{\triangleright})^n$ such that $\sum_{j = 1}^{n} \A^{\triangleright} g_j = \A^{\triangleright}$ and $V_j = X(g_j^{-1}g)$ for each $j \in \N \cap [1,n]$. Let $j \in \N \cap [1,n]$. Denote by $\A_j$ the affinoid ring over $\A$ corresponding to $V_j$. The image of $g_j$ is invertible in $\A_j^{\triangleright} = \t{H}^0(V_j,\Opn_X)$. Set $R_j \coloneqq \n{g_j^{-1}}_{\A_j}^{-1} > 0$. Put $R \coloneqq \min_{j \in \N \cap [1,n]} R_j > 0$. Since the valuation of $k$ is non-trivial, there is a $c \in k^{\times}$ with $\v{c} < R$. For each $F \subset \N \cap [1,n]$, set $W_F \coloneqq \set{x \in X}{\v{(c^{-1} g_j)(x)} \leq 1, {}^{\forall} j \in F} \cap \set{x \in X}{\v{(c^{-1} g_j)(x)} \geq 1, {}^{\forall} j \in S \backslash F}$. We have $W_{\N \cap [1,n]} \cap X_1 = \emptyset$ by the choice of $c$, and the f-adic ring $\t{H}^0(W_{\N \cap [1,n]}, \Opn_X)$ is $0$ by Proposition \ref{height 1} and \cite{Ber} 1.2.1 Theorem. Therefore $W_{\N \cap [1,n]} = \emptyset$, and $\W \coloneqq \set{W_F}{F \subsetneq \N \cap [1,n]}$ is a rational covering. For each $(j,F) \in S \coloneqq (\N \cap [1,n]) \times (2^{\N \cap [1,n]} \backslash \ens{\N \cap [1,n]})$, we set $V_{j,F} \coloneqq V_j \cap W_F$. Put $\Sym \coloneqq \set{(j,F) \in S}{j \notin F}$. For any $(j,F) \in S \backslash \Sym$, we get $V_{j,F} = \emptyset$. It implies that $\V \W \coloneqq \set{V_{j,F}}{(j,F) \in \Sym}$ is a rational covering refining $\V$. For any $(j,F) \in \Sym$, the image of $f_j$ is invertible in $\t{H}^0(W_F, \Opn_X)$, and we have $V_{j,F} = \set{x \in W_F}{\v{(f_j^{-1}f_{j'})(x)} \leq 1, {}^{\forall} j' \in (\N \cap [1,n]) \backslash F}$ as a rational domain of $W_F$. In particular, for any $F \subsetneq \N \cap [1,n]$, $\V |_{W_F} \coloneqq \set{V_{j,F}}{(j,F) \in \Sym} \subset \V \W$ is a strongly cyclic covering.

\vspace{0.2in}
Let $\W$ is a strongly cyclic covering of $X$. Replacing $\W$ by a rational refinement $\ens{W_1, \ldots, W_n \\
%
% newline
%
\hookrightarrow X}$ obtained by adding suitable rational domains of $X$ to $\W$, we may assume that there is an $h = (h_1,\ldots,h_n) \in ((\A^{\triangleright})^{\times})^n$ such that $W_l = X(h_l^{-1}h)$ for each $l \in \N \cap [1,n]$. Denote by $\T \subset (\N \cap [1,n]) \times 2^{\N \cap [1,n]}$ the subset of elements $(l,\sigma)$ with $l \notin \sigma$. For each $(l,\sigma) \in \T$, put $W_{l,\sigma} \coloneqq \set{x \in W_l}{\v{h_l(x)} \leq 1, {}^{\forall} l \in \sigma} \cap \set{x \in W_l}{\v{h_l^{-1}(x)} \leq 1, {}^{\forall} l \in (\N \cap [1,n]) \backslash \sigma}$. Then for each $l \in \N \cap [1,n]$, $\set{W_{l,\sigma} \hookrightarrow W_l}{(l,\sigma) \in \T}$ is a special covering, and $\set{W_{l,\sigma} \hookrightarrow X}{(l,\sigma) \in \T}$ is a special covering refining $\W$.
\end{proof}

\section{Uniformity of Berkovich Spectra}
\label{Uniformity of Berkovich Spectra}

We construct a uniform Banach $k$-algebra whose certain rational localisation is not uniform in \S \ref{Negative Facts for Berkovich Spaces}. We also give several affirmative results for a rational localisation of a Berkovich spectrum in \S \ref{Affirmative Facts for Berkovich Spaces}.

\subsection{Negative Facts}
\label{Negative Facts for Berkovich Spaces}

We give an example of a rational localisation which does not preserve the uniformity. Take a monotonously increasing sequence $(a_1, \ldots) \in [1,\infty)$ with $a_1 = 1$, $\lim_{i \to \infty} a_i = \infty$, and $\lim_{i \to \infty} (i+1)^{-1}a_{i+1} = 0$. Put $a_0 \coloneqq 1$. Let $r \in ( 0, \infty )$. We denote by $k \set{r^{-a_i}U^iX^{a_i}}{i \in \N} \subset k \ens{r^{-1}X,U}$ the closure of the $k$-subalgebra generated by $\set{U^iX^{a_i}}{i \in \N}$. Since $k \ens{r^{-1}X,U}$ is uniform, so is $k \set{r^{-a_i}U^iX^{a_i}}{i \in \N}$.

\begin{thm}
\label{not uniform 1}
The Banach $k$-algebra $k \set{r^{-a_i}U^iX^{a_i}}{i \in \N} \ens{X}$ is not a Banach function algebra over $k$.
\end{thm}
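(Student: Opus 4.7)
The plan is to exhibit a sequence $b_n \in \B := \A\langle X \rangle$ (where $\A := k\set{r^{-a_i}U^i X^{a_i}}{i \in \N}$) whose norms $\n{b_n}_\B$ grow without bound while their spectral radii $\rho_\B(b_n)$ remain uniformly bounded; by Proposition~\ref{uniform Banach} and the Banach open mapping theorem, this will force $\n{\cdot}_\B$ and $\rho_\B$ to be inequivalent, so $\B$ will not be a Banach function algebra. Since $\A$ is uniform, Proposition~\ref{presentation 2} identifies $\B$ isometrically with $\A\ens{T}/(T - X)$ carrying the quotient norm; write $\pi$ for the quotient map. The test elements will be the generators themselves: $b_n := U^n X^{a_n}$ for $n \geq 1$.

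The key computation will be $\n{b_n}_\B = r^{a_n}$. The trivial lift $b_n \in \A \subset \A\ens{T}$ gives the upper bound at once. For the lower bound, I would fix an arbitrary lift $G = \sum_k g_k T^k \in \A\ens{T}$ of $b_n$ and exploit the following auxiliary map: since $r > 1$, the identity on coefficients defines a submetric homomorphism $k\ens{r^{-1}X, U} \to k\ens{X, U}$, which composes with the inclusion $\A \hookrightarrow k\ens{r^{-1}X, U}$ and (via the universal property of $\A\langle X \rangle$) extends to a submetric homomorphism $\phi \colon \B \to k\ens{X, U}$. Applying $\phi$ to the equation $\pi(G) = b_n$ yields the identity $\sum_k g_k X^k = U^n X^{a_n}$ in $k\ens{X, U}$; reading off the coefficient of $U^n X^{a_n}$ gives $\sum_k c_{n, a_n - k, k} = 1$, where $c_{I, A, k}$ denotes the coefficient of $U^I X^A$ in $g_k$. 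The structural support condition of $\A$, namely that every monomial $U^I X^A$ appearing in an element of $\A$ with $I \geq 1$ satisfies $A \geq a_I$ (a consequence of the subadditivity $a_x + a_y \geq a_{x + y}$ of the map $i \mapsto a_i$), then forces every term in the sum with $k \geq 1$ to vanish. Consequently $c_{n, a_n, 0} = 1$, whence $\n{g_0}_\A \geq r^{a_n}$ and $\n{G}_{\A\ens{T}} \geq r^{a_n}$; taking the infimum will give $\n{b_n}_\B \geq r^{a_n}$.

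For the spectral radius, the upper bound $\rho_\B(b_n) \leq 1$ will come from a smarter lift of $b_n^m$: using $m a_n \geq a_{nm}$ (again an elementary consequence of the subadditivity), the monomial $G_m := U^{nm} X^{a_{nm}} T^{m a_n - a_{nm}}$ lies in $\A\ens{T}$, has norm $r^{a_{nm}}$, and satisfies $\pi(G_m) = U^{nm} X^{m a_n} = b_n^m$, so $\n{b_n^m}_\B \leq r^{a_{nm}}$ and $\rho_\B(b_n) \leq \lim_m r^{a_{nm}/m} = 1$, since $a_{nm}/m \to 0$. The matching lower bound $\rho_\B(b_n) \geq 1$ will be immediate: pulling back the Gauss point of $k\ens{X, U}$ along $\phi$ produces a bounded multiplicative seminorm on $\B$ at which $b_n$ evaluates to $1$. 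Thus $\rho_\B(b_n) = 1$ and $\n{b_n}_\B/\rho_\B(b_n) = r^{a_n} \to \infty$, completing the argument. The hardest step will be the lower bound $\n{b_n}_\B \geq r^{a_n}$: one must verify that the subadditivity of $i \mapsto a_i$ translates into a rigid monomial support condition on $\A$ and then check that this condition survives the passage to the quotient $\A\ens{T}/(T - X)$, forcing the full monomial cost of $b_n$ to be carried by the single coefficient $g_0$ of any lift.
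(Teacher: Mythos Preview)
Your argument is correct. The subadditivity $a_x+a_y\geq a_{x+y}$ holds (for $x,y\geq 1$ one has $x+y<2^{a_x}+2^{a_y}\leq 2^{a_x+a_y}$, and the case where one index is $0$ is trivial since $a_0=1$), so the monomial support condition on $\A$ is valid; the identity $\sum_k g_k X^k=U^nX^{a_n}$ in $k\{X,U\}$ then pins the coefficient $c_{n,a_n,0}=1$ exactly as you say, and the spectral-radius estimate via the lift $U^{nm}X^{a_{nm}}T^{ma_n-a_{nm}}$ is clean.

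Your route differs from the paper's in the lower-bound step. The paper never uses the auxiliary map to $k\{X,U\}$ or the support condition; instead it builds an injective map $\iota\{X\}\colon \B\hookrightarrow k\{U\}[[X]]$, and then proves the full formula $\|U^{i_0}X^{j_0}\|_\B=r^{b_{i_0}}$ for \emph{all} $j_0\geq b_{i_0}$ by a direct contradiction argument, tracking coefficients through $(T-X)F+G$ term by term. Your approach is shorter and more conceptual for the theorem as stated; the paper's is heavier but yields strictly more: the injectivity of $\iota\{X\}$ and the norm formula for arbitrary $j_0$ are reused downstream (in Lemma~\ref{direct product - localisation}, Proposition~\ref{not admissible}, and Theorem~\ref{not sheafy}). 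If you only need Theorem~\ref{not uniform 1} your proof is preferable; if you intend to continue to the non-sheafy examples you would eventually have to recover those extra facts.
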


\begin{proof}
Put $\A \coloneqq k \set{r^{-a_i}U^iX^{a_i}}{i \in \N}$ $\B \coloneqq \A \ens{X}$. To begin with, we show that the natural inclusion $\iota \colon \A \hookrightarrow k \ens{r^{-1}X,U} \hookrightarrow k \ens{U}[[X]]$ is extended to a unique continuous $k$-algebra homomorphism $\iota \ens{X} \colon \B \hookrightarrow k \ens{U}[[X]]$ with respect to the inverse limit topology of $k \ens{U}[[X]]$. The inclusion $\iota$ is continuous because it is the restriction of the natural inclusion $k \ens{r^{-1}X,U} \hookrightarrow k \ens{U}[[X]]$ given through the isometric isomorphism $k \ens{r^{-1}X,U} \cong k \ens{U} \ens{r^{-1}X}$. The $k$-algebra homomorphism $\A[T] \to k \ens{U}[[X]]$ extending $\iota$ and sending $T$ to $X$ uniquely extends to a continuous homomorphism $\widetilde{\iota} \colon \A \ens{T} \to k \ens{U}[[X]]$ because $X$ is a topologically nilpotent element and $k \ens{U}[[X]]$ is a Fr\'echet $k$-algebra. Its kernel contains the principal ideal $(T - X)$, and hence it induces a continuous $k$-algebra homomorphism $\iota \Ens{X} \colon \B \to k \ens{U}[[X]]$ because $\widetilde{\iota}$ is continuous and $k \ens{U}[[X]]$ is Hausdorff. Put $b_0 \coloneqq a_0 - 1 = 0$ and $b_i = a_i$ for each $i \in \N$. Let $f = \sum_{h = 0}^{\infty} \sum_{i = 0}^{\infty} \sum_{j = b_i}^{\infty} f_{h,i,j}U^iX^jT^h \in \ker(\widetilde{\iota})$. Since $\lim_{h+i+j \to \infty} \v{f_{h,i,j}} r^j = 0$, the infinite sum $\sum_{i = 0}^{\infty} \sum_{l = b_i}^{\infty} \sum_{j = b_i}^{l} f_{l-j,i,j}U^iX^jT^{l-j}$ also converges to $f$. Therefore by the continuity of $\widetilde{\iota}$, we have $\sum_{i = 0}^{\infty} \sum_{l = b_i}^{\infty} \sum_{j = b_i}^{l} f_{l-j,i,j} U^iX^l = \widetilde{\iota}(f) = 0$. Comparing the coefficients of both sides, we obtain $0 = \sum_{j = b_i}^{l}  f_{l-j,i,j}$ for any $i,l \in \N$ with $l \geq b_i$. In particular, $f_{0,i,b_i} = 0$ for any $i \in \N$. Set $g \coloneqq - \sum_{h = 0}^{\infty} \sum_{i = 0}^{\infty} \sum_{j = b_i}^{\infty} \sum_{m = 0}^{h} f_{h-m,i,j+m+1} U^iX^jT^h \in k[[U,X,T]]$. The equality $\lim_{h+i+j \to \infty} \v{f_{h,i,j}} r^j = 0$ implies $\lim_{h \to \infty} \sup_{i \in \N} \sup_{j \geq b_i} \v{- \sum_{m = 0}^{h} f_{h-m,i,j+m+1}} r^j = 0$ because $r > 1$, and hence $g$ lies in $\A \ens{T}$. We have
\begin{eqnarray*}
  & & (T - X)g \\
  & = & \sum_{i = 0}^{\infty} \sum_{j = b_i}^{\infty} f_{0,i,j+1} U^iX^{j+1} - \sum_{h = 1}^{\infty} \sum_{i = 0}^{\infty} \sum_{m = 0}^{h - 1} f_{h-1-m,i,b_i+m+1} T(U^iX^{b_i}T^{h-1}) \\
  & & + \sum_{h = 1}^{\infty} \sum_{i = 0}^{\infty} \sum_{j = b_i + 1}^{\infty} \left( - \sum_{m = 0}^{h - 1} f_{h-1-m,i,j+m+1} + \sum_{m = 0}^{h} f_{h-m,i,j+m} \right) U^iX^jT^h \\
  & = & \sum_{i = 0}^{\infty} \sum_{j = b_i + 1}^{\infty} f_{0,i,j} U^iX^j - \sum_{h = 1}^{\infty} \sum_{i = 0}^{\infty} \sum_{m = 0}^{h - 1} f_{h-1-m,i,b_i+m+1} U^iX^{b_i}T^h + \sum_{h = 1}^{\infty} \sum_{i = 0}^{\infty} \sum_{j = b_i + 1}^{\infty} f_{h,i,j} U^iX^jT^h \\
  & = & \sum_{i = 0}^{\infty} \left( 0 + \sum_{j = b_i + 1}^{\infty} f_{0,i,j} U^iX^j \right) + \sum_{h = 1}^{\infty} \sum_{i = 0}^{\infty} \left( 0 - \sum_{m = 0}^{h-1} f_{h-1-m,i,b_i+m+1} \right) U^iX^{b_i}T^h \\
  & & + \sum_{h = 1}^{\infty} \sum_{i = 0}^{\infty} \sum_{j = b_i + 1}^{\infty} f_{h,i,j} U^iX^jT^h \\
  & = & \sum_{i = 0}^{\infty} \left( f_{0,i,b_i} U^iX^{b_i} + \sum_{j = b_i + 1}^{\infty} f_{0,i,j} U^iX^j \right) \\
  & & + \sum_{h = 1}^{\infty} \sum_{i = 0}^{\infty} \left( \sum_{j = b_i}^{b_i + h} f_{b_i+h-j,i,j} - \sum_{m = 0}^{h-1} f_{h-1-m,i,b_i+m+1} \right) U^iX^{b_i}T^h + \sum_{h = 1}^{\infty} \sum_{i = 0}^{\infty} \sum_{j = b_i + 1}^{\infty} f_{h,i,j} U^iX^jT^h \\
  & = & \sum_{i = 0}^{\infty} \sum_{j = b_i}^{\infty} f_{0,i,j} U^iX^j + \sum_{h = 1}^{\infty} \sum_{i = 0}^{\infty} f_{h,i,b_i} U^iX^{b_i}T^h + \sum_{h = 1}^{\infty} \sum_{i = 0}^{\infty} \sum_{j = b_i + 1}^{\infty} f_{h,i,j} U^iX^jT^h \\
  & = & \sum_{h = 0}^{\infty} \sum_{i = 0}^{\infty} \sum_{j = b_i}^{\infty} f_{h,i,j} U^iX^jT^h = f.
\end{eqnarray*}
Therefore $f \in (T - X)$. We obtain $\ker(\widetilde{\iota}) = (T - X)$, and hence $\iota \ens{X}$ is injective.

\vspace{0.2in}
Since the image of $k[U^iX^{a_i} \mid i \in \N]$ is dense in $\B$, the image of $\iota \ens{X}$ is contained in the closed $k$-subalgebra $\B \coloneqq \set{F = \sum_{i = 0}^{\infty} \sum_{j = 0}^{\infty} F_{i,j} U^iX^j \in k \ens{U}[[X]]}{F_{i,j} = 0, {}^{\forall} j < b_i}$. Denote by $\varphi \colon \A \to \B$ the canonical bounded homomorphism. We verify $\n{\varphi(U^iX^j)}_{\B} = r^{b_i}$ for any $i,j \in \N$ with $j \geq b_i$. Let $i_0,j_0 \in \N$ with $j_0 \geq b_{i_0}$. We have $\n{\varphi(U^{i_0}X^{j_0})}_{\B} \leq \n{\varphi(X^{j_0-b_{i_0}})}_{\B} \n{\varphi(U^{i_0}X^{b_{i_0}})}_{\B} \leq r^{b_{i_0}}$. Assume $\n{\varphi(U^{i_0}X^{j_0})}_{\B} < r^{b_{i_0}}$. Then there are an $F \in \A \ens{T}$ and a $G \in \A \ens{T}(r^{b_{i_0}}-)$ such that $U^{i_0}X^{j_0} = (T - X)F + G$. Set $F \coloneqq \sum_{h = 0}^{\infty} \sum_{i = 0}^{\infty} \sum_{j = b_i}^{\infty} F_{h,i,j} U^iX^jT^h$ and $G \coloneqq \sum_{h = 0}^{\infty} \sum_{i = 0}^{\infty} \sum_{j = b_i}^{\infty} G_{h,i,j} U^iX^jT^h$. We have
\begin{eqnarray}
  U^{i_0}X^{j_0} & = & - X \sum_{i = 0}^{\infty} \sum_{j = b_i}^{\infty} F_{0,i,j} U^iX^j + \sum_{i = 0}^{\infty} \sum_{j = b_i}^{\infty} G_{0,i,j} U^iX^j \nonumber \\
  & = & \sum_{i = 0}^{\infty} \left( G_{0,i,b_i} U^iX^{b_i} + \sum_{j = b_i + 1}^{\infty} (-F_{0,i,j-1} + G_{0,i,j}) U^iX^j \right)
\end{eqnarray}
and
\begin{eqnarray}
  0 & = & \sum_{i = 0}^{\infty} F_{h-1,i,j} U^iX^{b_i} + \sum_{i = 0}^{\infty} \sum_{j = b_i + 1}^{\infty} (F_{h-1,i,j} - F_{h,i,j-1}) U^iX^j + \sum_{i = 0}^{\infty} \sum_{j = b_i}^{\infty} G_{h,i,j} U^iX^j \nonumber \\
  & = & \sum_{i = 0}^{\infty} (F_{h-1,i,b_i} + G_{h,i,b_i}) U^iX^{b_i} + \sum_{i = 0}^{\infty} \sum_{j = b_i + 1}^{\infty} (F_{h-1,i,j} - F_{h,i,j-1} + G_{h,i,j}) U^iX^j
\end{eqnarray}
for any $h \in \N \backslash \ens{0}$. Assume $j_0 = b_{i_0}$. Comparing the coefficients of the equality $(1)$ through the inclusion $\iota \ens{X}$, we obtain $G_{0,i_0,b_{i_0}} = 1$. It contradicts the inequality $\n{G_{0,i_0,b_{i_0}} U^{i_0}X^{b_{i_0}}}_{\A} \leq \n{G}_{\A \ens{T}} < \n{U^{i_0} X^{b_{i_0}}}_{\A} r^{b_{i_0}}$. Therefore we get $j_0 > b_{i_0}$. Comparing the coefficients of both sides of the equalities $(1)$ and $(2)$, we obtain $G_{0,i,b_i} = 0$ for any $i \in \N$, $F_{0,i_0,j_0-1} + 1 = G_{0,i_0,j_0}$, $F_{0,i,j-1} = G_{0,i,j}$ for any $i,j \in \N$ with $(i,j) \neq (i_0,j_0)$ and $j \geq b_i + 1$, $F_{h-1,i,b_i} + G_{h,i,b_i} = 0$ for any $h,i,j \in \N$ with $h \geq 1$, and $F_{h-1,i,j} - F_{h,i,j-1} + G_{h,i,j} = 0$ for any $h,i,j \in \N$ with $h \geq 1$ and $j \geq b_i + 1$. We have $\v{G_{h,i_0,j_0}} = \n{G_{h,i_0,j_0} U^{i_0}X^{j_0}T^h}_{\A \ens{T}} r^{-j_0} \leq \n{G}_{\A \ens{T}} r^{-j_0} < r^{-(j_0-b_0)} < 1$ for any $h \in \N$, $\v{F_{0,i_0,j_0-1}} = \v{G_{0,i_0,j_0} - 1} = 1$, and $\v{F_{h,i_0,j_0-1}} = \v{F_{h-1,i_0,j_0} + G_{h,i_0,j_0}} = 1$ for any $h \in \N \backslash \ens{0}$ by induction on $h$. It contradicts the fact $\lim_{h \to \infty} \sup_{i = 0}^{\infty} \sup_{j = b_i}^{\infty} \v{F_{h,i,j}} r^j = 0$. We obtain $\n{\varphi(U^{i_0}X^{j_0})}_{\B} = r^{b_{i_0}}$, and hence $\lim_{n \to \infty} \n{\varphi(UX)^n}_{\B}^{1/n} = \lim_{n \to \infty} r^{b_n/n} = r^{\lim_{n \to \infty} b_n/n} = 1$. Thus the set $\set{\n{f}_{\B}^{-1} \rho_{\B}(f)}{f \in \B \backslash \ens{0}}$ is unbounded in $[0,\infty)$, and $\n{\cdot}_{\B}$ is not equivalent to $\rho_{\B}$ as seminorms. We conclude that $\B$ is not a Banach function algebra over $k$.
\end{proof}

\begin{crl}
\label{not sheafy 1}
The uniform Banach $k$-algebra $k \set{r^{-a_i}U^iX^{a_i}}{i \in \N}$ is not sheafy.
\end{crl}

\begin{proof}
By the proof of Theorem \ref{not uniform 1}, we have $\n{\varphi(UX)^n}_{k \set{r^{-a_i}U^iX^{a_i}}{i \in \N} \ens{X}} = r^{b_n}$ for any $n \in \N$, while the norm of the image of $(UX)^n$ in $k \set{r^{-a_i}U^iX^{a_i}}{i \in \N} \ens{X, UX}$ is smaller than or equal to $1$. Thus the set
\begin{eqnarray*}
  \Set{\frac{\nn{f}_{k \Set{r^{-a_i}U^iX^{a_i}}{i \in \N} \Ens{X, UX}}}{\nn{f}_{k \set{r^{-a_i}U^iX^{a_i}}{i \in \N} \ens{X}}}}{f \in k \Set{r^{-a_i}U^iX^{a_i}}{i \in \N}, \nn{f}_{k \Set{r^{-a_i}U^iX^{a_i}}{i \in \N} \Ens{X}} > 0}
\end{eqnarray*}
is unbounded in $[0,\infty)$, and $k \set{r^{-a_i}U^iX^{a_i}}{i \in \N} \ens{X}$ and $k \set{r^{-a_i}U^iX^{a_i}}{i \in \N} \ens{X, UX}$ are not isomorphic to each other over $k \set{r^{-a_i}U^iX^{a_i}}{i \in \N}$, while they correspond to the same rational domain of $\M(k \set{r^{-a_i}U^iX^{a_i}}{i \in \N})$.
\end{proof}

\subsection{Affirmative Facts}
\label{Affirmative Facts for Berkovich Spaces}

\begin{dfn}
A Banach $k$-algebra $\A$ is said to be a {\it strongly Banach function algebra over $k$} if for any $n \in \N \backslash \ens{0}$ and $f \in \A^n$, $\A \ens{f}$ is a Banach function algebra over $k$.
\end{dfn}

For each $(m,l) \in \N \times \N$, we set $P_m^l \coloneqq (\ens{1, -1}^m)^l$. We denote by $Q_m^l \subset P_m^l$ the subset of elements $(\sigma_1, \ldots, \sigma_l)$ with $\sigma_1(m) = \cdots = \sigma_l(m)$. We put
\begin{eqnarray*}
  \A \Ens{f^{\Sigma}} & \coloneqq & \A \Ens{f_1^{\sigma_1(1)}, \ldots, f_n^{\sigma_1(n)}, f_1^{\sigma_2(1)}, \ldots, f_n^{\sigma_2(n)}, \ldots, f_1^{\sigma_l(1)}, \ldots, f_n^{\sigma_l(n)}} \\
  \langle \Sigma, 1 \rangle & \coloneqq & ((\sigma_1, 1), \ldots, (\sigma_l, 1)) \in Q_{n+1}^l \\
  \langle \Sigma, -1 \rangle & \coloneqq & ((\sigma_1, -1), \ldots, (\sigma_l, -1)) \in Q_{n+1}^l \\
  \langle \Sigma, \pm 1 \rangle & \coloneqq & ((\Sigma,1),(\Sigma,-1)) \in P_{n+1}^{2l}
\end{eqnarray*}
for each Banach $k$-algebra $\A$, $n \in \N \backslash \ens{0}$, $f = (f_1, \ldots, f_n) \in (\A^{\times})^n$, $l \in \N \cap [1,n]$, and $\Sigma = (\sigma_1, \ldots, \sigma_l) \in P_n^l$

\begin{thm}
\label{acyclicity 2}
Let $\A$ be a strongly Banach function algebra over $k$. For any $n \in \N \backslash \ens{0}$ and $f = (f_1, \ldots, f_n) \in (\A^{\times})^n$, the $\check{\m{C}}$ech complex
\begin{eqnarray*}
  0 \to \A \to \prod_{\sigma \in P_n^1} \A \Ens{f^{\Sigma}} \to \prod_{\Sigma \in P_n^2} A \Ens{f^{\Sigma}} \to \cdots \to \prod_{\Sigma \in P_n^{2^n}} A \Ens{f^{\Sigma}} \to 0
\end{eqnarray*}
is an admissible exact sequence.
\end{thm}

\begin{proof}
In the case $n = 1$, the assertion directly follows from Corollary \ref{acyclicity}. Let $m \in \N \backslash \ens{0}$. Suppose that and the assertion holds for any $n \in \N \backslash \ens{0}$ with $n \leq m$. We prove the assertion in the case $n = m+1$. Put $g \coloneqq (f_1, \ldots, f_m) \in (\A^{\times})^m$ and $h \coloneqq f_{m+1}$. Consider the following commutative diagram $D$:
\begin{eqnarray*}
\begin{CD}
    @. 0 @. 0 @. 0 @. \\
  @. @VVV @VVV @VVV @. \\
  0 @>>> \A @>>> \displaystyle\prod_{s \in \Ens{1, -1}} \A \Ens{h^s} @>>> \A \Ens{h,h^{-1}} @>>> 0 \\
  @. @VVV @VVV @VVV @. \\
  0 @>>> \displaystyle\prod_{\Sigma \in P_m^1} \A \Ens{g^{\Sigma}} @>>> \displaystyle\prod_{\Sigma \in Q_{m+1}^1} \A \Ens{f^{\Sigma}} @>>> \displaystyle\prod_{\Sigma \in P_m^1} \A \Ens{f^{\langle \Sigma, \pm 1 \rangle}} @>>> 0 \\
  @. @VVV @VVV @VVV @. \\
    @. \vdots @. \vdots @. \vdots @. \\
  @. @VVV @VVV @VVV @. \\
  0 @>>>  \displaystyle\prod_{\Sigma \in P_m^{2^m}} A \Ens{g^{\Sigma}} @>>> \displaystyle\prod_{\Sigma \in Q_{m+1}^{2^m}} \A \Ens{f^{\Sigma}} @>>> \displaystyle\prod_{\Sigma \in P_m^{2^m}} \A \Ens{f^{\langle \Sigma, \pm 1 \rangle}} @>>> 0 \\
  @. @VVV @VVV @VVV @. \\
    @. 0 @. 0 @. 0 @. .
\end{CD}
\end{eqnarray*}
Since $\A$ is a strongly Banach function algebra over $k$, each Banach algebra appearing in components of the diagram is a Banach function algebra. By Lemma \ref{transitivity}, we have $\A \ens{g^{\Sigma}} \ens{h^s}$ is isometrically isomorphic to $\A \ens{f^{\langle \Sigma, s \rangle}}$ for any $l \in \N \backslash \ens{0}$ and $(\Sigma,s) \in P_m^l \times \ens{1, -1}$. Therefore the central column (resp.\ the right column) is obtained by applying the Weierstrass localisations $\ens{h}$ and $\ens{h^{-1}}$ (resp.\ $\ens{h, h^{-1}}$) to each Banach $k$-algebra appearing in components of the left column. It implies that each row is an admissible exact sequence by Corollary \ref{acyclicity}. By the commutativity of Weierstrass localisations, the central column (resp.\ the right column) is the $\check{\m{C}}$ech complex with respect to the Weierstrass covering of $\M_k(\A \ens{h} \times \A \ens{h^{-1}})$ (resp.\ $\M_k(\A \ens{h, h^{-1}})$) associated to $f_1, \ldots, f_m, f_1^{-1}, \ldots, f_m^{-1}$. Therefore each column is an admissible exact sequence by the induction hypothesis. The total complex $\t{T}(D)$ of the diagram is naturally isometrically isomorphic to the $\check{\m{C}}$ech complex $C$ with respect to the Weierstrass covering of $\M_k(\A)$ associated to $f_1, \ldots, f_{m+1}, f_1^{-1}, \ldots, f_{m+1}^{-1}$. Thus the exactness of each row and each column of $D$ guarantees the exactness of $C$, and it is admissible by Banach's open mapping theorem (\cite{Bou} Theorem I.3.3/1).
\end{proof}

\begin{dfn}
Let $\A$ be a Banach $k$-algebra. An $a \in \A$ is said to be {\it quasi-nilpotent} if $\rho_{\A}(a) = 0$, and $\A$ is said to be {\it spectrally reduced} if no non-zero element of $\A$ is quasi-nilpotent.
\end{dfn}

A Banach $k$-algebra is spectrally reduced if and only if the Gel'fand transform (\cite{Ber} 1.2.2 Remarks (i)) of $\A$ on the Berkovich spectrum is injective.

\begin{dfn}
A Banach $k$-algebra is said to be a {\it Banach field over $k$} if its underlying $k$-algebra is a field.
\end{dfn}

Banach fields appear when we consider the Banach $k$-algebra of sections on a closed subset of the Berkovich spectrum associated to a Banach $k$-algebra topologically of finite type.

\begin{thm}
Let $\A$ be a uniform Banach field over $k$. For any $a \in \A$, $\A \ens{a}$ is spectrally reduced. Moreover, if the image of the map $\A^{\times} \to [0,\infty), b \mapsto \n{b}_{\A} \n{b^{-1}}_{\A}$ is bounded, then $\A \ens{a}$ is a Banach field over $k$ and the image of the map $\A \ens{a}^{\times} \to [0,\infty), b \mapsto \n{b}_{\A \ens{a}} \n{b^{-1}}_{\A \ens{a}}$ is bounded.
\end{thm}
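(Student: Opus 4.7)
The plan is to first identify $\A\ens{a}$ with $\A\ens{T}/(T-a)$ via Proposition \ref{presentation 2}, so that $\M(\A\ens{a})$ coincides with $\set{y \in \M(\A)}{y(a) \leq 1}$ and, for a lift $F = \sum_{i \in \N} f_i T^i \in \A\ens{T}$ of $[F] \in \A\ens{a}$, the image of $[F]$ in $\H(y)$ is the convergent series $\sum_i f_i a^i$ (convergent since $y(f_i) \leq \nn{f_i}_{\A} \to 0$ and $y(a) \leq 1$). The polynomial truncations $F_N \coloneqq \sum_{i = 0}^{N} f_i T^i$ satisfy $[F_N] = F_N(a) \in \A \subset \A\ens{a}$, and $[F_N] \to [F]$ in $\A\ens{a}$ because the quotient map is submetric and $\nn{F - F_N}_{\A\ens{T}} = \sup_{i > N} \nn{f_i}_{\A} \to 0$.

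For spectral reducedness, I would reduce to the key claim that $\nn{b}_{\A\ens{a}} = \sup_{y \in \M(\A), y(a) \leq 1} y(b)$ for every $b \in \A$. Granted this, suppose $\rho_{\A\ens{a}}([F]) = 0$. Since $[F_N] \to [F]$ in $\A\ens{a}$ and $y([F]) = 0$ for every $y \in \M(\A\ens{a})$, the ultrametric inequality yields $y(F_N(a)) \leq \sup_{i > N} \nn{f_i}_{\A}$ uniformly in $y \in \M(\A\ens{a})$; the key claim then gives $\nn{F_N(a)}_{\A\ens{a}} \to 0$, whence $[F] = 0$. The $\geq$ direction of the claim is immediate. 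For the $\leq$ direction, the polynomial Ansatz $G_N \coloneqq \sum_{i=0}^{N} (b/a^{i+1}) T^i \in \A[T]$ (using $a \in \A^{\times}$, provided by the field hypothesis) yields $b + (T-a) G_N = -b a^{-(N+1)} T^{N+1}$, hence $\nn{b}_{\A\ens{a}} \leq \nn{b a^{-(N+1)}}_{\A}$ for every $N$. The main obstacle is matching $\inf_N \nn{b a^{-N}}_{\A}$ with $\sup_{y: y(a) \leq 1} y(b)$: this is delicate when $\ens{y(a) : y \in \M(\A)}$ accumulates at $1$ from above without a uniform gap, and the expected resolution uses a refined $G$ combining the above Ansatz with a dual construction on the high-$y(a)$ side, relying on the field structure and uniformity of $\A$.

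For the ``Moreover'' statement, the hypothesis is equivalent to $\sup_{y} y(b) \leq C \inf_{y} y(b)$ for $b \in \A^{\times}$ and $y \in \M(\A)$. For a nonzero $[F] \in \A\ens{a}$, spectral reducedness yields $y_0 \in \M(\A\ens{a})$ with $y_0([F]) > 0$. Approximating $[F]$ by $b_N \in \A$, the ratio hypothesis gives $y(b_N) \geq y_0(b_N)/C$ uniformly over $y \in \M(\A)$, and in the limit $y([F]) \geq y_0([F])/C > 0$ uniformly over $y \in \M(\A\ens{a})$. This uniform lower bound yields inverses $b_N^{-1} \in \A$ of controlled $\A\ens{a}$-norm, which by completeness assemble into $[F]^{-1} \in \A\ens{a}$; the uniform bound on $\nn{[F]}_{\A\ens{a}} \nn{[F]^{-1}}_{\A\ens{a}}$ then follows from the key equality of the first part applied to both $[F]$ and $[F]^{-1}$. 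The principal technical challenge throughout is control at points $y \in \M(\A)$ with $y(a) > 1$, which lie outside $\M(\A\ens{a})$ and where the hypotheses provide no direct information.
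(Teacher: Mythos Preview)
Your proposal hinges on the ``key claim'' $\nn{b}_{\A\ens{a}} = \sup_{y:\,y(a)\le 1} y(b)$ for $b\in\A$, but you do not prove it. Your Ansatz $G_N=\sum_{i=0}^N b a^{-(i+1)}T^i$ gives only $\nn{b}_{\A\ens{a}}\le \nn{ba^{-(N+1)}}_{\A}$, and when there are $y\in\M(\A)$ with $y(a)<1$ (i.e.\ when $\nn{a^{-1}}_{\A}>1$, the only interesting case), $\nn{ba^{-N}}_{\A}\to\infty$ as $N\to\infty$, so the infimum over $N$ need not match the desired supremum. The phrase ``the expected resolution uses a refined $G$ combining the above Ansatz with a dual construction'' is not an argument; it is the entire missing proof. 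Note also that the key claim is strictly stronger than spectral reducedness: it asserts that the quotient norm on the image of $\A$ coincides with the spectral seminorm of $\A\ens{a}$, and if it held for all of $\A\ens{a}$ it would say $\A\ens{a}$ is uniform---precisely the kind of statement the paper's negative results show can fail for rational localisations. Your ``Moreover'' argument inherits the same gap, since bounding $\nn{b_N^{-1}}_{\A\ens{a}}$ via the spectrum again requires the key claim.

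The paper's proof avoids any norm formula on $\A\ens{a}$ entirely. Given a quasi-nilpotent $n$ with lift $f\in\A\ens{T}$, one has $f(x)\in(T-a(x))\kappa(x)\ens{T}$ for every $x\in\M(\A)\ens{a}$; since $\A$ is a field, $\A\hookrightarrow\kappa(x)$ and the recursion $g_i=-\sum_{j\le i}a^{-j-1}f_{i-j}$ shows the quotient $g$ with $(T-a)g=f$ lies in $\A[[T]]$ and is independent of $x$. The substance of the proof is that $g\in\A\ens{T}$: this is done by a compactness argument on $\M(\A)$, covering it by regions $\ens{\v{a}>r_0}$ (where $T-a$ is invertible in $\kappa(y)\ens{T}$), finitely many neighbourhoods $V(z)$ of points with $\v{a(z)}=1$ (where the recursion propagates uniform smallness of $\v{g_i(w)}$), and $\ens{\v{a}<1}$. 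For the ``Moreover'' part the paper does not touch the spectrum at all: submetricity of $\A\to\A\ens{a}$ transports the bound $\nn{b}_{\A}\nn{b^{-1}}_{\A}\le C$ directly to $\nn{b}_{\A\ens{a}}\nn{b^{-1}}_{\A\ens{a}}\le C$ for $b$ in the image of $\A$, and then a routine Cauchy-sequence argument on $(b_i^{-1})$ shows every nonzero element is a unit.
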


\begin{proof}
Since $\A$ is uniform, $\A$ is spectrally reduced. If $\n{a}_{\A} \leq 1$, then $\A \ens{a} \cong \A$. Therefore we may assume $\n{a}_{\A} > 1$. In particular, $a \neq 0$ and hence $a \in \A^{\times}$. If $\A \ens{a} = 0$, then the assertion obviously holds. Suppose that $\A \ens{a} \neq 0$. Then Lemma \ref{Nullstellensatz} guarantees $\n{a^{-1}}_{\A} \geq 1$, because the uniformity of $\A$ and \cite{Ber} 1.3.1 Theorem imply $\n{a^{-1}}_{\A} = \rho_{\A}(a^{-1}) = \sup_{x \in \M(\A)} \v{a(x)}$. Let $n \in \A \ens{a}$ be quasi-nilpotent. We verify $n = 0$. Take a lift $f = \sum_{i = 0}^{\infty} f_i T^i \in \A \ens{T}$ of $n$ with respect to the canonical projection $\A \ens{T} \twoheadrightarrow \A \ens{a}$, and assume $f \neq 0$. Since $\A \ens{a} \neq 0$, $\M(\A \ens{a}) \neq \emptyset$ by \cite{Ber} 1.2.1 Theorem. Take an $x \in \M(\A \ens{a}) \cong \M(\A) \ens{a}$. Since $n$ is a quasi-nilpotent, $n(x) = 0 \in \kappa (x)$, and hence $f(x) = \sum_{i = 0} f_i(x) T^i \in \kappa (x) \ens{T} (T - a(x))$. The canonical homomorphism $\A \to \kappa (x)$ is injective with dense image because $\A$ is a field. Therefore $f(x) \neq 0$. Take a unique $g_x = \sum_{i = 0}^{\infty} g_{x,i} T^i \in \kappa (x) \ens{T}$ with $(T - a(x))g_x = f(x)$. We prove that $g_x$ lies in the image of $\A \ens{T}$. By the inequality $(T - a(x))g_x = f(x)$, we have $-a(x) g_{x,0} = f_0(x)$ and $g_{x,i} - a(x) g_{x,i+1} = f_{i+1}(x)$ for any $i \in \N$. Since $\v{a(x)} = \v{a^{-1}(x)}^{-1} \geq \n{a^{-1}}_{\A}^{-1} > 0$, $a(x) \neq 0 \in \kappa (x)$ and hence these equalities guarantee that $g_{x,i}$ lies in the image of $\A$ for any $i \in \N$. Let $g_i \in \A$ denote a unique element with $g_i(x) = g_{x,i}$ for each $i \in \N$. Set $g \coloneqq \sum_{i = 0}^{\infty} g_i T^i \in \A[[T]]$. Since $(T - a(x))g(x) = (T -a(x))g_x = f(x) \in \kappa (x) \ens{T} \subset \kappa (x)[[T]]$, we have $(T - a)g = f \in \A[[T]]$ by the injectivity of $\A \to \kappa (x)$. It implies that $f_{i + 1} = g_i - a g_{i + 1}$ and $g_i = - \sum_{j = 0}^{i} a^{-j-1} f_{i - j}$ for any $i \in \N$. We obtain that $g$coincides with $- \sum_{i = 0}^{\infty} (\sum_{j = 0}^{i} a^{-j-1}f_{i-j}) T^i$, and hence is independent of $x \in \M(\A \ens{a})$. In particular, $g$ satisfies $(T - a(y))g(y) = f(y) \in \kappa (y) \ens{T}$ for any $y \in \M(\A) \ens{a}$. Moreover, we have $T - a = -a (1 - a^{-1}T) \in \A^{\times} (1 + \A[[T]]T) \subset \A[[T]]^{\times}$, and hence $g = (T - a)^{-1}f \in \A[[T]]$.

\vspace{0.2in}
Let $\epsilon > 0$. We show that there is an $N \in \N$ such that $\n{g_i}_{\A \ens{a}} < \epsilon$ for any $i \geq N$. Since $f \in \A \ens{T}$, there is an $N \in \N$ such that $\n{f_i}_{\A} < \epsilon$ for any $i \geq N$. Assume that there is an $i \geq N$ such that $\n{g_i}_{\A \ens{a}} \geq \epsilon$. By the uniformity of $\A$, there is a $y \in \M(\A) \ens{a}$ such that $\v{g_i(y)} \geq \epsilon$. By the inequalities $\v{a(y)} \leq 1$ and $\v{f_{i + 1}(y)} \leq \n{f_{i + 1}}_{\A} < \epsilon \leq \v{g_i(y)}$, we have $\v{g_{i + 1}(y)} = \v{a(y)}^{-1} \v{(a g_{i + 1})(y)} = \v{a(y)}^{-1} \v{(g_i - f_{i + 1})(y)} \geq \epsilon$. Therefore $\overline{\lim}_{n \to \infty} \v{g_i(y)} \geq \epsilon$. It contradicts the fact that $g(y) = g_y \in \kappa (y) \ens{T}$. Thus $\n{g_i}_{\A \ens{a}} < \epsilon$ for any $i \geq N$. It implies that $\lim_{i \to \infty} \n{g_i}_{\A \ens{a}} = 0$ and the image of $g$ in $\A \ens{a} [[T]]$ lies in $\A \ens{a} \ens{T}$. In particular, the image of $f = (T-a)g$ in $\A \ens{a} \ens{T}$ lies in $(T-a) \A \ens{a} \ens{T}$. Since the canonical projection $\A \ens{T} \twoheadrightarrow \A \ens{a}$ coincides with the composite of the rational localisation $\A \ens{T} \to \A \ens{a} \ens{T}$ of coefficients and the evaluation $\A \ens{a} \ens{T} \twoheadrightarrow \A \ens{a} \colon T \mapsto a$, the image $n$ of $f$ in $\A \ens{a}$ is $0$. We conclude that $\A \ens{a}$ is spectrally reduced.

\vspace{0.2in}
Suppose that the image of the map $\A^{\times} \to [0,\infty), b \mapsto \n{b}_{\A} \n{b^{-1}}_{\A}$ is bounded. Set $C \coloneqq \sup_{b \in \A^{\times}} \n{b}_{\A} \n{b^{-1}}_{\A}$. For any $b \in \A \ens{a}^{\times}$ lying in the image of $\A$, we have $\n{b}_{\A \ens{a}} \n{b^{-1}_{\A \ens{a}}} \leq C$ because the canonical homomorphism $\A \to \A \ens{a}$ is submetric by the definition of $\n{\cdot}_{\A \ens{a}}$. Let $b \in \A \ens{a} \backslash \ens{0}$. Since the image of $\A$ is dense in $\A \ens{a}$, there is a sequence $(b_i)_{i \in \N} \in \A \ens{a}^{\N}$ such that each $b_i$ lies in the image of $\A$ and $\lim_{i \to \infty} b_i = b$. Since $b \neq 0$ and $\A \ens{a} \backslash \ens{0} \subset \A \ens{a}$ is open, we may assume $b_i \neq 0$ for any $i \in \N$. Since the underlying $k$-algebra of $\A$ is a field, $b_i \in \A \ens{a}^{\times}$ for any $i \in \N$. Let $\epsilon > 0$. Since $\lim_{i \to \infty} b_i = b$, there is an $N \in \N$ such that $\n{b - b_i}_{\A \ens{a}} < \min \ens{\n{b}_{\A \ens{a}},\epsilon}$ for any $i \geq N$. Therefore we have $\n{b_i^{-1} - b_j^{-1}}_{\A \ens{a}} \leq \n{b_j - b_i}_{\A \ens{a}} \n{b_i^{-1}}_{\A \ens{a}} \n{b_j^{-1}}_{\A \ens{a}} \leq \n{b_j - b_i}_{\A \ens{a}}(C \n{b_i}_{\A \ens{a}}^{-1})(C \n{b_j}_{\A \ens{a}}^{-1}) < C^2 \n{b}_{\A \ens{a}}^{-2} \epsilon$ for any $i,j \geq N$. It implies that $(b_i^{-1})_{i \in \N}$ converges, and the limit is the inverse of $b$ by the continuity of the multiplication. In particular, $b \in \A \ens{a}^{\times}$. Therefore $\A \ens{a}$ is a Banach filed over $k$. The inverse $(\cdot)^{-1} \colon \A \ens{a}^{\times} \to \A \ens{a}^{\times}$ is continuous by [BGR] Proposition 1.2.4/4, and hence the map $\A \ens{a}^{\times} \to [0,\infty), b \mapsto \n{b}_{\A \ens{a}} \n{b^{-1}}_{\A \ens{a}}$ is continuous. Since the image of $\A$ is dense in $\A \ens{a}$, we obtain $\sup_{b \in \A \ens{a}^{\times}} \n{b}_{\A \ens{a}} \n{b^{-1}}_{\A \ens{a}} \leq C$.
\end{proof}

For a topological space $X$, we denote by $\t{C}_{\t{bd}}(X,k)$ the uniform Banach $k$-algebra of bounded continuous $k$-valued functions on $X$ endowed with the supremum norm $\n{\cdot}_{\t{C}_{\t{bd}}(X,k)}$ on $X$. The evaluation $X \times \t{C}_{\t{bd}}(X,k) \to k \colon (x,f) \mapsto f(x)$ induces a continuous map $\iota_X \colon X \to \M(\t{C}_{\t{bd}}(X,k))$. If $X$ is compact, then then $\t{C}_{\t{bd}}(X,k)$ coincides with the $k$-algebra of continuous $k$-valued functions on $X$, and $\iota_X$ is surjective by Gel'fand--Naimark theorem (\cite{Ber} 9.2.5 Theorem (i)). If $X$ is zero-dimensional and Hausdorff, then $\iota_X$ is injective by the argument after \cite{Mih} Proposition 1.7. For a clopen subset $U \subset X$, the characteristic function $1_U \colon X \to k$ of $U$ gives a rational domain $\M(\t{C}_{\t{bd}}(\N,k)) \ens{1_U}$. The preimage of $\M(\t{C}_{\t{bd}}(\N,k)) \ens{1_U}$ by $\iota_X$ coincides with $U \subset X$ by the definition of $\iota_X$. We remark that a rational domain of $\M(\t{C}_{\t{bd}}(X,k))$ seems not to correspond to a clopen subset of $X$ in general even if $X$ is zero-dimensional and Hausdorff. There can be monstrously many points in $\M(\t{C}_{\t{bd}}(X,k))$ which are not $k$-rational when $X$ is not compact and $k$ is not a local field, i.e.\ a complete discrete valuation field with finite residue field. For more detail, see \cite{Mih} Theorem 4.12.

\begin{dfn}
A Banach $k$-algebra $\A$ is said to be {\it spectral} if $\M(\A)(k) \neq \emptyset$ and the equality $\n{a}_{\A} = \sup_{x \in \M(\A)(k)} \v{a(x)}$ holds for any $a \in \A$.
\end{dfn}

Every spectral Banach $k$-algebra is uniform, and is canonically isometrically isomorphic to a closed $k$-subalgebra of the Banach $k$-algebra $\t{C}_{\t{bd}}(\M(\A)(k),k)$ through the Gel'fand transform on $\M(\A)(k)$.

\begin{thm}
If $k$ is a local field, then every spectral $k$-algebra is sheafy.
\end{thm}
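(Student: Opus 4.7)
My plan is to use the spectral embedding $\A \hookrightarrow \t{C}_{\t{bd}}(X, k)$ with $X := \M(\A)(k)$, together with the discreteness of $\v{k}$ (which is a consequence of $k$ being a local field), to reduce the sheaf condition to gluing of continuous functions along a clopen cover of $X$.

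For every rational subset $U = \M(\A)\ens{f_0^{-1}f_1, \ldots, f_0^{-1}f_n}$ of $\M(\A)$, the relation $\sum_{i=1}^{n} \A f_i = \A$ produces $g_1, \ldots, g_n \in \A$ with $\sum g_i f_i = 1$, forcing $\v{f_0(x)} \geq (\max_i \n{g_i}_{\A})^{-1} > 0$ on $U(k) := U \cap X$. Combined with the discreteness of $\v{k}$, this makes $U(k)$ clopen in $X$. I then identify the rational localisation $\A\ens{f_0^{-1}f_1, \ldots, f_0^{-1}f_n}$ with the closure in $\t{C}_{\t{bd}}(U(k), k)$ of the image of $\A$ adjoining the ratios $f_i/f_0$; since this identification depends only on $U$, presentation independence follows.

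By the standard refinement argument the sheaf condition reduces to exactness of the Laurent sequence $0 \to \A \to \A\ens{f} \oplus \A\ens{f^{-1}} \to \A\ens{f, f^{-1}}$ for $f \in \A$, where $X$ decomposes into clopen pieces $X_1 = \ens{\v{f} \leq 1}$, $X_2 = \ens{\v{f} \geq 1}$ and $X_{12} = \ens{\v{f} = 1}$. The crux is to build clopen-characteristic functions inside $\A$. Let $p, q$ denote the residue characteristic and cardinality of $k$. Then $u^{(q-1)p^n} \to 1$ uniformly in $u \in \O_k^{\times}$, so $h_n := f^{(q-1)p^n}/(1 + f^{(q-1)p^n})$ lies in $\A$ for $n \gg 0$ (the denominator is uniformly invertible by discreteness of $\v{k}$) and converges uniformly on $X$ to $\chi_{X_2 \setminus X_{12}} + \tfrac{1}{2} \chi_{X_{12}}$. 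Polynomial manipulation (e.g.\ $4 h_n (1 - h_n) \to \chi_{X_{12}}$, $h_n(2 h_n - 1) \to \chi_{X_2 \setminus X_{12}}$) then yields $\chi_{X_1}, \chi_{X_{12}}, \chi_{X_2 \setminus X_{12}} \in \A$, and variants using $f^{M(q-1)p^n - i}/(1 + f^{(q-1)p^n})^M$ give $\chi_{X_2 \setminus X_{12}} \cdot f^{-i} \in \A$ for every $i \geq 0$.

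The gluing is then routine. Given $(a_1, a_2) \in \A\ens{f} \oplus \A\ens{f^{-1}}$ agreeing on $X_{12}$, let $a \in \t{C}_{\t{bd}}(X, k)$ be the continuous function with $a|_{X_i} = a_i$. Proposition \ref{presentation} gives density of the image of $\A$ in $\A\ens{f}$, so $a_1$ is a uniform limit of elements of $\A$ on $X_1$; multiplying such approximants by $\chi_{X_1}$ and using closedness of $\A$ in $\t{C}_{\t{bd}}(X, k)$ yields $\chi_{X_1} a \in \A$. Similarly $a_2$ is a uniform limit of polynomials $\sum_i d_i f^{-i}$ with $d_i \in \A$, and multiplying termwise by $\chi_{X_2 \setminus X_{12}}$ and invoking the previous paragraph gives $\chi_{X_2 \setminus X_{12}} a \in \A$; summing gives $a \in \A$. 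The technically delicate point is the partition-of-unity construction: the clean choice of $h_n$ above relies on $\v{2} = 1$, which fails when the residue characteristic of $k$ equals $2$. In that case one replaces $h_n$ by a variant such as $f^{(q-1)q^n}/(c - f^{(q-1)q^n})$ for a suitable $c \in k^{\times} \setminus \ens{1}$ and adjusts the polynomial combinations, without introducing any conceptual difficulty.
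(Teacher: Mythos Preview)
Your approach differs substantially from the paper's, and has a genuine gap.

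The paper does \emph{not} work directly with the embedding $\A \hookrightarrow \t{C}_{\t{bd}}(\M(\A)(k),k)$. Instead it invokes a non-Archimedean Stone--Weierstrass/Gel'fand--Naimark type result (\cite{Mih} Theorem~5.8 together with \cite{Ber1} 9.2.5) to upgrade this embedding to an isometric \emph{isomorphism} $\A \cong \t{C}_{\t{bd}}(X',k)$ for some totally disconnected compact Hausdorff space $X'$, with $X' \cong \M(\A)$. Once $\A$ is the full algebra of continuous functions, rational localisations are literally restrictions to clopen subsets and sheafiness is immediate.

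Your direct route never establishes this identification, and the argument breaks at exactly the place where it is needed. You write that $h_n = f^{(q-1)p^n}/(1+f^{(q-1)p^n})$ ``lies in $\A$ for $n \gg 0$ (the denominator is uniformly invertible by discreteness of $\v{k}$)''. But a lower bound $\inf_{x \in X}\v{(1+f^{(q-1)p^n})(x)} > 0$ on $X = \M(\A)(k)$ does \emph{not} by itself give invertibility in the Banach algebra $\A$: for that you need the bound on all of $\M(\A)$, and you have not shown that every point of $\M(\A)$ is $k$-rational (nor that $X$ is dense in $\M(\A)$ in a way that controls infima). Proving that ``bounded below on $X$ implies invertible in $\A$'' for a spectral $\A$ over a local field is essentially the content of the Stone--Weierstrass input the paper imports from \cite{Mih}; you are tacitly assuming it.

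The same circularity infects the earlier steps. Your ``identification'' of $\A\ens{f_0^{-1}f_1,\ldots,f_0^{-1}f_n}$ with a closure inside $\t{C}_{\t{bd}}(U(k),k)$ asserts injectivity of the evaluation map from the rational localisation, i.e.\ that the localisation is again spectrally reduced with enough $k$-points --- which is part of what must be proved, not a hypothesis. Likewise the ``standard refinement argument'' reducing to a single Laurent cover requires that the class of algebras under consideration be stable under rational localisation (one must iterate Laurent covers on the pieces), and you have not shown that rational localisations of spectral $k$-algebras remain spectral. Without the Stone--Weierstrass step, none of these stability properties is available.
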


\begin{proof}
Let $\A$ be a spectral $k$-algebra. Since $\A$ is spectral, the image of the Gel'fand transform $\A \hookrightarrow \t{C}_{\t{bd}}(\M(\A)(k),k)$ separates points, i.e.\ for any distinct points $x,y \in \M(\A)(k)$, there is an $f \in \A$ such that $f(x) \neq f(y)$ as elements of $k$. By \cite{Mih} Theorem 4.19, there is a one-to-one correspondence between closed $k$-subalgebras of $\t{C}_{\t{bd}}(\M(\A)(k),k)$ separating points and totally disconnected Hausdorff quotients of $\M(\t{C}_{\t{bd}}(\M(\A)(k),k))$ which is faithful under $\M(\A)(k)$ in the sense of \cite{Mih} Definition 4.18. In particular, $\A$ is canonically isometrically isomorphic to $\t{C}_{\t{bd}}(X,k)$ for a totally disconnected compact Hausdorff space $X$. Therefore we may assume $\A = \t{C}_{\t{bd}}(X,k)$ for a compact space $X$ without loss of generality. Each $x \in X$ corresponds to a point of $\M(\t{C}_{\t{bd}}(X,k))$ by the evaluation at $x$. This correspondence gives a continuous map $\iota_X \colon X \to \M(\t{C}_{\t{bd}}(X,k))$. Since $X$ is a totally disconnected compact Hausdorff space, the natural map $X \to \M(\t{C}_{\t{bd}}(X,k))$ is a homeomorphism by Gel'fand--Naimark theorem (\cite{Ber} Theorem 9.2.5 (i)).

\vspace{0.2in}
Let $U \subset \M(\t{C}_{\t{bd}}(X,k))$ be a rational domain. Take generators $f_0, f_1, \ldots, f_n$ of the principal ideal $(1)$ with $U = \set{x \in \M(\t{C}_{\t{bd}}(X,k))}{\v{f_i(x)} \leq \v{f_0(x)}, {}^{\forall} i \in \N \cap [1,n]}$. Since $k$ is zero-dimensional, the subset $U \cap X \coloneqq \set{x \in X}{\v{f_i(x)} \leq \v{f_0(x)}, {}^{\forall} i \in \N \cap [1,n]}$ is clopen. Therefore $U \subset \M(\t{C}_{\t{bd}}(X,k))$ is clopen because it is the image of $U \cap X$ by $\iota_X$. Let $e \in \t{C}_{\t{bd}}(X,k)$ be the characteristic function of $U \cap X$. Since $1 - e$ vanishes at every point of $U$, $1 - e$ is contained in the kernel of the canonical bounded homomorphism $\t{C}_{\t{bd}}(X,k) \to \t{C}_{\t{bd}}(X,k) \ens{f_0^{-1}f_1,\ldots,f_0^{-1}f_n}$. Let $f \in \t{C}_{\t{bd}}(X,k)$. Since $\iota_X|_{U \cap X} \colon U \cap X \to U$ is surjective, we have $\n{f_U}_{\t{C}_{\t{bd}}(X,k) \ens{f_0^{-1}f_1,\ldots,f_0^{-1}f_n}} \geq \n{f|_{U \cap X}}_{\t{C}_{\t{bd}}(U \cap X,k)}$. On the other hand, we have $\n{f_U}_{\t{C}_{\t{bd}}(X,k) \ens{f_0^{-1}f_1,\ldots,f_0^{-1}f_n}} \leq \n{f|_{U \cap X}}_{\t{C}_{\t{bd}}(U \cap X,k)}$ by $\n{(1-e)f}_{\M(\t{C}_{\t{bd}}(X,k))} = \n{f|_{U \cap X}}_{\M(\t{C}_{\t{bd}}(U \cap X,k))}$. Thus $\n{f_U}_{\t{C}_{\t{bd}}(X,k) \ens{f_0^{-1}f_1,\ldots,f_0^{-1}f_n}} = \n{f|_{U \cap X}}_{\t{C}_{\t{bd}}(U \cap X,k)}$. Since $U \cap X \subset X$ is clopen, the restriction $(\cdot)|_{U \cap X} \colon \t{C}_{\t{bd}}(X,k) \to \t{C}_{\t{bd}}(U \cap X,k)$ is surjective. Therefore by the argument above, $\t{C}_{\t{bd}}(X,k) \ens{f_0^{-1}f_1,\ldots,f_0^{-1}f_n}$ and $\t{C}_{\t{bd}}(U \cap X,k)$ are isometrically isomorphic to each other. In particular, the isomorphism class of $\t{C}_{\t{bd}}(X,k) \ens{f_0^{-1}f_1,\ldots,f_0^{-1}f_n}$ is independent of the presentation $(n,(f_0,f_1,\ldots,f_n))$ of $U$, and the structure presheaf $\Opn_{\t{C}_{\t{bd}}(X,k)}$ on the Grothendieck topology generated by rational domains on $\M(\t{C}_{\t{bd}}(X,k))$ is well-defined. Moreover, $\Opn_{\t{C}_{\t{bd}}(X,k)}$ is canonically isomorphic to the sheaf of bounded continuous functions. We conclude that $\A = \t{C}_{\t{bd}}(X,k)$ is sheafy.
\end{proof}

\section{Uniformity of Adic Spectra}
\label{Uniformity of Adic Spectra}

We verify that the example in \S \ref{Negative Facts for Berkovich Spaces} yields examples of uniform affinoid rings whose rational localisations are not uniform in \S \ref{Negative Facts for Adic Spaces}. We verify that one of them is an example of a non-sheafy uniform affinoid ring. Instead, we give a sufficient condition for the sheaf property stronger than the uniformity in \S \ref{Affirmative Facts for Adic Spaces}. We remark that we always assume that the underlying f-adic ring of an affinoid ring is a complete Tate ring.

\subsection{Negative Facts}
\label{Negative Facts for Adic Spaces}

We give an example of strongly uniform affinoid ring such that its rational localisation is not uniform. The existence of such an example implies that the uniformisation of the affinoid algebra associated to a Banach $k$-algebra given in \cite{Ber} \S 1.3.\ (the strong uniformisation of a uniform affinoid ring given in Proposition \ref{uniformisation 2}) does not induce a uniformisation (strong uniformisation) of the adic spectrum.

\begin{thm}
\label{not uniform 2}
Let $r \in (1,\infty)$. Following the notation in \S \ref{Negative Facts for Berkovich Spaces}, set $Y \coloneqq \t{Spa}(k \set{r^{-a_i}U^iX^{a_i}}{i \in \N}^{\t{ad}})$ and $V \coloneqq \set{x \in Y}{\v{X(x)} \leq 1}$. The rational localisation $(\Opn_{Y}(V),\Opn_{Y}^{+}(V))$ is not a uniform affinoid ring over $k^{\t{ad}}$.
\end{thm}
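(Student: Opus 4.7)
The plan is to reduce the statement directly to Theorem \ref{not uniform 1} by identifying the underlying topological $k$-algebra of $\mathscr{O}_{Y}(V)$ with the Banach $k$-algebra $\A \ens{X}$, where $\A \coloneqq k\set{r^{-a_i}U^iX^{a_i}}{i \in \N}$. Since $\A$ is a uniform Banach $k$-algebra, $\A^{\t{ad}} = (\A,\A^{\circ})$ is a strongly uniform affinoid ring over $k^{\t{ad}}$ by Proposition \ref{uniform - affinoid}, so $Y = \t{Spa}(\A^{\t{ad}})$ is defined and $V$ is the Weierstrass-type rational domain cut out by $X \in \A$.

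First I would identify $\mathscr{O}_{Y}(V)$ with $\A \langle X \rangle$ as topological $k$-algebras. By the definition of the structure presheaf on an adic spectrum, for a Weierstrass domain defined by a single element $X \in \A$, the ring $\mathscr{O}_{Y}(V)$ is the universal complete f-adic Tate ring over $\A$ in which $X$ becomes power-bounded, with the ring of definition obtained by completing $\A^{\circ}[X]$ with respect to the topology inherited from $\A^{\circ}$. This is exactly the construction $\A(1) \langle X \rangle$ (and hence $\A \langle X \rangle$ after inverting a topologically nilpotent unit of $k$) introduced at the end of \S \ref{Rational Localisations}. Combining with Proposition \ref{presentation}, we obtain an isometric isomorphism of the underlying Banach $k$-algebra of $\mathscr{O}_{Y}(V)$ with $\A \ens{X}$.

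Next I argue by contradiction. Suppose $(\mathscr{O}_{Y}(V), \mathscr{O}_{Y}^{+}(V))$ is a uniform affinoid ring over $k^{\t{ad}}$. By Proposition \ref{almost uniform - affinoid}, it is then isomorphic as an affinoid ring to the affinoid algebra associated to some Banach function algebra $\B$ over $k$; in particular the topological $k$-algebras $\mathscr{O}_{Y}(V)$ and $\B$ are isomorphic. Via the identification of the previous paragraph, this means that $\A \ens{X}$ and $\B$ agree as topological $k$-algebras, so by the Banach open mapping theorem the complete norm $\n{\cdot}_{\A \ens{X}}$ is equivalent to the complete norm $\n{\cdot}_{\B}$, which in turn is equivalent to $\rho_{\B} = \rho_{\A \ens{X}}$. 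Hence $\A \ens{X}$ would be a Banach function algebra over $k$, contradicting Theorem \ref{not uniform 1}.

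The main obstacle is the first step: one must carefully verify that the ring of sections $\mathscr{O}_{Y}(V)$ in the adic-space sense coincides topologically with the ring $\A \langle X \rangle$ constructed in \S \ref{Rational Localisations}. This essentially amounts to matching the two universal properties (universal adjunction of a power-bounded element to a complete f-adic Tate ring versus universal adjunction of an element of norm $\leq 1$ to a Banach $k$-algebra), and should go through smoothly because $\A$ is strongly uniform, so the ring of power-bounded elements of $\A$ is exactly $\A(1) = \A^{\circ}$, which makes the two topologies on $\A^{\circ}[X]$ used in the two completions literally identical. Once this identification is in hand, the rest of the argument is a direct appeal to Proposition \ref{almost uniform - affinoid} and Theorem \ref{not uniform 1}.
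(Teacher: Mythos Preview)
Your proposal is correct and follows essentially the same route as the paper: identify $\mathscr{O}_{Y}(V)$ with the underlying topological $k$-algebra of $\A\ens{X}$ via Proposition~\ref{presentation}, and then combine Theorem~\ref{not uniform 1} with Proposition~\ref{almost uniform - affinoid} and the Banach open mapping theorem to rule out uniformity. The paper's proof is the same argument compressed into two sentences; your only addition is the explicit discussion of why the adic-space Weierstrass localisation agrees with $\A\langle X\rangle$, which the paper takes for granted.
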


\begin{proof}
By Proposition \ref{comparison}, $\Opn_{Y}(V)$ is isomorphic to the underlying topological $k$-algebra of $k \set{r^{-a_i}U^iX^{a_i}}{i \in \N} \ens{X}$. Therefore $(\Opn_{Y}(V),\Opn_{Y}^{+}(V))$ is not uniform by Theorem \ref{not uniform 1} and Proposition \ref{almost uniform - affinoid}, because any complete norm on the underlying topological $k$-algebra of a Banach $k$-algebra which gives its original topology is equivalent to its original norm by Banach's open mapping theorem (\cite{Bou}, Theorem I.3.3/1).
\end{proof}

We remark that the direct analogue of Corollary \ref{not sheafy 1} does not hold for the adic spectrum because the structure presheaf on an adic spectrum is well-defined by the universality of rational domains. Indeed, the rational domain $V' \coloneqq \set{x \in Y}{\v{X(x)} \leq 1, \v{(UX)(x)} \leq 1}$ does not coincides with $V$ unlike the corresponding rational domains of the Berkovich spectrum. Indeed, since $UX \in (\Opn_{Y}(V)^{\circ})^{\t{ac}} \backslash \Opn_{Y}(V)^{\circ}$, there is an $x \in V$ such that $\v{(UX)(x)} > 1$ by Lemma \ref{bounded ac - spectral radius}. They share points of height $1$, but $V$ possesses more points of general height than $V'$. This example does not seem to be so pathological, and hence we do not know whether it is sheafy or not. Using it, we construct a more pathological example.

\vspace{0.2in}
Let $(\A_i)_{i \in I}$ be a family of Banach $k$-algebras. There is an isometric homomorphism
\begin{eqnarray*}
  \left( \prod_{i \in I} \A_i \right) \Ens{T} & \hookrightarrow & \prod_{i \in I} \left( \A_i \Ens{T} \right) \\
  \sum_{h = 0}^{\infty} (a_{h,i})_{i \in I} T^h & \mapsto & \left( \sum_{h = 0}^{\infty} a_{h,i} T^h \right)_{i \in I},
\end{eqnarray*}
which is not surjective unless $\A_i \cong 0$ for all but finitely many $i \in I$. We identify the domain with the image. For an $(a_i)_{i \in I} \in \prod_{i \in I} \A_i$, it induces a bounded homomorphism $(\prod_{i \in I} \A_i) \ens{(a_i)_{i \in I}} \to \prod_{i \in I}(\A_i \Ens{a_i})$, which is submetric by the definition of the quotient norms. We remark that if $\A_i$ is uniform for any $i \in I$, the direct product $\prod_{i \in I}(\A_i \ens{T}) \to \prod_{i \in I}(\A_i \ens{a_i})$ of the canonical projections is surjective. Indeed, let $(f_i)_{i \in I} \in \prod_{i \in I} \A_i \ens{a_i}$ and $\epsilon > 0$. By the definition of the quotient norms, for any $i \in I$, there exists a lift $F_i \in \A_i \ens{T}$ of $f_i$ such that $\n{F_i}_{\A_i \ens{T}} \leq \n{f_i}_{\A_i \ens{a_i}} + \epsilon$. We have $\sup_{i \in I} \n{F_i}_{\A_i \ens{T}} \leq \sup_{i \in I} \n{f_i}_{\A_i \ens{a_i}} + \epsilon = \n{(f_i)_{i \in I}}_{\prod_{i \in I} \A_i \ens{a_i}} + \epsilon$. Therefore the sequence $(F_i)_{i \in I}$ lies in $\prod_{i \in I} \A_i \ens{T}$, and is a lift of $(f_i)_{i \in I}$.

\begin{lmm}
\label{uniform convergence}
Let $(\A_i)_{i \in I}$ be a family of Banach $k$-algebras, and $(a_i)_{i \in I} \in \prod_{i \in I} \A_i$ a sequence satisfying that there is a $C > 1$ such that for any $i \in I$ and $f_i \in \A_i$, the inequality $\n{a_i f_i} \geq C \n{f_i}$ holds. Then the canonical bounded homomorphism $(\prod_{i \in I} \A_i) \ens{(a_i)_{i \in I}} \to \prod_{i \in I}(\A_i \ens{a_i})$ is an isometry, and in particular, is injective.
\end{lmm}

\begin{proof}
Let $\varphi$ denote the canonical embedding $(\prod_{i \in I} \A_i) \ens{T} \hookrightarrow \prod_{i \in I}(\A_i \ens{T})$, and $\overline{\varphi}$ the induced homomorphism $(\prod_{i \in I} \A_i) \ens{(a_i)_{i \in I}} \to \prod_{i \in I}(\A_i \ens{a_i})$. Let $f \in (\prod_{i \in I} \A_i) \ens{T}$, and take a lift $F = \sum_{h = 0} F_h T^h \in (\prod_{i \in I} \A_i) \ens{T}$ of $f$. We prove $\n{f}_{(\prod_{i \in I} \A_i) \ens{T}} = \n{\overline{\varphi}(f)}_{\prod_{i \in I} (\A_i \ens{a_i})}$. Since $\overline{\varphi}$ is submetric, we have $\n{F}_{(\prod_{i \in I} \A_i) \ens{T}} \geq \n{f}_{(\prod_{i \in I} \A_i) \ens{(a_i)_{i \in I}}} \geq \n{\overline{\varphi}(f)}_{\prod_{i \in I} (\A_i \ens{a_i})}$. Therefore in order to verify $\n{f}_{(\prod_{i \in I} \A_i) \ens{(a_i)_{i \in I}}} = \n{\overline{\varphi}(f)}_{\prod_{i \in I} (\A_i \ens{a_i})}$, we may assume $\n{F}_{(\prod_{i \in I} \A_i) \ens{T}} > \n{\overline{\varphi}(f)}_{\prod_{i \in I} (\A_i \ens{a_i})}$. Put $F_h = (F_{h,i})_{i \in I} \in \prod_{i \in I} \A_i$ for each $h \in \N$. Since the composite of $\varphi$ and the canonical projection $\prod_{i \in I} (\A_i \ens{T}) \twoheadrightarrow \prod_{i \in I} (\A_i \ens{a_i})$ coincides with the composite of the canonical projection $(\prod_{i \in I} \A_i) \ens{T} \twoheadrightarrow (\prod_{i \in I} \A_i) \ens{(a_i)_{i \in I}}$ and $\overline{\varphi}$, the image $f_i \in \A_i \ens{a_i}$ of $\sum_{h = 0}^{\infty} F_{h,i} T^h \in \A_i \ens{T}$ satisfies $\n{f_i}_{\A_i \ens{a_i}} \leq \n{\overline{\varphi}(f)}_{\prod_{i \in I} (\A_i \ens{a_i})}$ for any $i \in I$.

\vspace{0.2in}
Let $\epsilon > 0$. For each $i \in I$, there is a $G_i \in \A_i \ens{T}$ with $\n{(T-a_i)G_i - \sum_{h = 0}^{\infty} F_{h,i} T^h}_{\A_i \ens{T}} < \n{\overline{\varphi}(f)}_{\prod_{i \in I} (\A_i \ens{a_i})} + \epsilon$ by $\n{f_i}_{\A_i \ens{a_i}} \leq \n{\overline{\varphi}(f)}_{\prod_{i \in I} (\A_i \ens{a_i})}$. We obtain $\n{G_i}_{\A_i \ens{T}} \leq \n{(T-a_i)G_i}_{\A_i \ens{T}} \leq \max \ens{\n{\overline{\varphi}(f)}_{\prod_{i \in I}(\A_i \ens{a_i})} + \epsilon, \n{F}_{(\prod_{i \in I} \A_i) \ens{T}}}$ by $\n{\sum_{h = 0}^{\infty} F_{i,h} T^h}_{\A_i \ens{T}} \leq \n{F}_{(\prod_{i \in I} \A_i) \ens{T}}$, and hence $G \coloneqq (G_i)_{i \in I}$ lies in $\prod_{i \in I} (\A_i \ens{T})$. We verify that $G$ lies in the image of $\varphi$. Put $G_i = \prod_{h = 0}^{\infty} G_{i,h} T^h$ for each $i \in I$. We have $R \coloneqq \overline{\lim}_{h \to \infty} \sup_{i \in I} \n{G_{i,h}}_{\A_i} \leq \n{G}_{\prod_{i \in I} (\A_i \ens{T})} < \infty$. Assume $R > 0$. Since $F \in (\prod_{i \in I} \A_i) \ens{T}$, there is an $h_0 \in \N$ such that $\n{F_h}_{\prod_{i \in I} \A_i} < R$ for any $h \geq h_0$. By the definition of $R$, for any $h_1 \in \N \backslash \ens{0}$, there is an $(h,i_0) \in \N \times I$ such that $h \geq h_0 + h_1$ and $\n{G_{i_0,h}} \geq R$. Then we have
\begin{eqnarray*}
  & & \nn{G_{i_0,h_0+h_1-1} - a_{i_0} G_{i_0,h_0+h_1}}_{\A_{i_0}} \leq \nn{\left( G_{i,h_0+h_1-1} - a_i G_{i+h_0+h_1} \right)_{\prod_{i \in I} \A_i}} = \nn{F_{h_0+h_1}}_{\prod_{i \in I} \A_i} < R \\
  & & \nn{a_{i_0} G_{i_0,h_0+h_1}}_{\A_{i_0}} \geq C \nn{G_{i_0,h_0+h_1}}_{\A_{i_0}} \geq CR,
\end{eqnarray*}
and hence $\n{G_{i_0,h_0+h_1-1}}_{\A_{i_0}} \geq CR > R$. Therefore we obtain $\n{G_{i_0,h_0+h_1-h}}_{\A_{i_0}} \geq C^{h} R > R$ by induction on $h$ for any $h \in \N \cap [0,h_1]$, and hence $\n{G}_{(\prod_{i \in I} \A_i) \ens{T}} \geq \n{G_{i_0,h_0}}_{\A_{i_0}} \geq C^{h_1} R$. Since $\lim_{h_1 \to \infty} C^{h_1} R = \infty$, it is a contradiction. We get $R = 0$. It implies that $\lim_{h \to \infty} \n{(G_{i,h})_{i \in I}}_{\prod_{i \in I} \A_i} = \overline{\lim}_{h \to \infty} \sup_{i \in I} \n{G_{i,h}}_{\A_i} = 0$, and hence $G$ lies in the image of $\varphi$. We conclude that
\begin{eqnarray*}
  & & \nn{f}_{\left( \prod_{i \in I} \A_i \right) \Ens{(a_i)_{i \in I}}} \leq \nn{F - (T - (a_i)_{i \in I}) G}_{\left( \prod_{i \in I} \A_i \right) \Ens{T}} \\
  & = & \sup_{i \in I} \nn{\left( \prod_{h = 0}^{\infty} F_{i,h} T^h \right) - (T-a_i) G_i}_{\A_i \Ens{T}} < \nn{\overline{\varphi}(f)}_{\prod_{i \in I} (\A_i \Ens{a_i})} + \epsilon \stackrel{\epsilon \to +0}{\longrightarrow} \nn{\overline{\varphi}(f)}_{\prod_{i \in I} (\A_i \Ens{a_i})},
\end{eqnarray*}
and thus $\overline{\varphi}$ is an isometry.
\end{proof}

We denote by $\t{C}_{\t{bd}}(\N,\R)$ the $R$-algebra of bounded $\R$-valued sequence.

\begin{lmm}
\label{direct product - localisation}
Let $r \in \t{C}_{\t{bd}}(\N,\R)$ with $r(\N) \subset (1,\infty)$. Then the canonical homomorphism
\begin{eqnarray*}
  \left( \prod_{n \in \N} k \Set{r(n)^{-a_i}U^iX^{a_i}}{i \in \N} \right) \Ens{(X)_{n \in \N}} \to \prod_{n \in \N} \left( k \Set{r(n)^{-a_i}U^iX^{a_i}}{i \in \N} \Ens{X} \right)
\end{eqnarray*}
is an isometry.
\end{lmm}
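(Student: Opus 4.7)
The plan is to reduce the isometry claim to a uniform version of the argument used in the proof of Proposition~\ref{presentation}. Write $\A_n := k\Set{r(n)^{-a_i}U^iX^{a_i}}{i \in \N}$ and $\B := \prod_{n \in \N} \A_n$. First I would verify that the setup is amenable: each $\A_n$ is a closed subalgebra of the uniform Tate algebra $k\ens{r(n)^{-1}X,U}$, hence uniform, and $\B$ is a direct product of uniform Banach $k$-algebras, hence uniform. Consequently Proposition~\ref{presentation 2} gives isometric identifications $\A_n\ens{X} \cong \A_n\ens{T}/(T-X)$ and $\B\ens{(X)_{n \in \N}} \cong \B\ens{T}/(T-(X)_{n \in \N})$, and the lemma becomes a statement about these quotients induced by the isometric embedding $\B\ens{T} \hookrightarrow \prod_n \A_n\ens{T}$.

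The submetric direction is immediate from the universal property of the Weierstrass localization, as already noted in the paragraph preceding the lemma. Hence it suffices, for every $F \in \B\ens{T}$ and every $s$ exceeding $\sup_n \nn{[F_n]}_{\A_n\ens{X}}$, to exhibit $F' \in \B\ens{T}$ satisfying $F - F' \in (T-(X)_{n \in \N})\B\ens{T}$ and $\nn{F'}_{\B\ens{T}} \leq s$.

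My strategy is to imitate the proof of Proposition~\ref{presentation} simultaneously in every factor. Writing $F = \sum_h c_h T^h$ with $c_h \in \B$, I would truncate at an $N$ so large that $\nn{c_h}_{\B} < s$ for $h > N$, so that $\nn{F - G}_{\B\ens{T}} \leq s$ for $G := \sum_{h \leq N} c_h T^h \in \B[T]$; then, componentwise, the image of $G_n$ in $\A_n\ens{X}$ has norm at most $s$ by the ultrametric inequality, so the argument in the proof of Proposition~\ref{presentation} produces $H_n \in \A_n[T]$ with $H_n(X) = G_n(X)$ in $\A_n$ and every coefficient of $H_n$ of norm $\leq s$. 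Setting $F' := (F - G) + H$ for $H := (H_n)_n$ would finish the proof, since $G_n - H_n$ vanishes at $T = X$ and therefore belongs to $(T-X)\A_n[T]$, while $\nn{F'}_{\B\ens{T}} \leq \max\bigl(\nn{F-G}_{\B\ens{T}},\nn{H}_{\B\ens{T}}\bigr) \leq s$.

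The principal obstacle is to choose the $H_n$ so that $H = (H_n)_n$ actually lies in $\B\ens{T}$: the coefficient norms $\leq s$ are uniform in $n$ by construction, but one must additionally ensure that the supremum over $n$ of the norm of the $T^m$-coefficient of $H_n$ tends to $0$ as $m \to \infty$. The explicit description of $\A_n$ as the closed $k$-subalgebra of $k\ens{r(n)^{-1}X,U}$ generated by $\Set{U^i X^{a_i}}{i \in \N}$, combined with the hypothesis that $r$ is bounded, is what should permit one to select the representation $G_n(X) = \sum_j \beta_{j,n} X^j$ with $\nn{\beta_{j,n}}_{\A_n} \leq s$ in such a way that the $T$-degree of the resulting $H_n$ is controlled uniformly in $n$ by quantities depending only on $N$ and $s$. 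Once this uniform selection is arranged, the admissibility estimate from the proof of Proposition~\ref{presentation 2}, namely $\nn{Q_n}_{\A_n\ens{T}} \leq \nn{G_n - H_n}_{\A_n\ens{T}}$ for $Q_n := (G_n - H_n)/(T-X) \in \A_n\ens{T}$, assembles $(Q_n)_n$ into an element $Q \in \B\ens{T}$ with $F - F' = (T - (X)_{n \in \N}) Q$, completing the argument.
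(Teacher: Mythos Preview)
Your route differs from the paper's: rather than truncating $F$ and then seeking componentwise lifts $H_n$, the paper starts from any $G\in\prod_{n}(\A_n\ens{T})$ satisfying $\nn{F-(T-(X)_n)G}<R$, expands each $G_{h,n}\in\A_n$ in powers of $X$, and telescopes along the approximate relation $G_{h,n}-XG_{h+1,n}\approx F_{h+1,n}$ to show that $\sup_n\nn{G_{h,n}}_{\A_n}$ is small once $h$ exceeds an integer $N$ determined by the uniform decay of $(F_h)_h$ in $\B$. Truncating $G$ at degree $N$ then produces a polynomial $H\in\B[T]$ with $\nn{F-(T-(X)_n)H}_{\B\ens{T}}<R$.

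There is a genuine gap precisely at your ``principal obstacle'': you invoke only that $r$ is bounded \emph{above}, but it is a bound \emph{below}, away from $1$, that your construction would require, and the lemma does not assume one. Take $r(n)=1+1/n$ and the constant $F=c:=(X^n)_{n}\in\B$; then $\nn{c}_{\B}=\sup_n(1+1/n)^n<\infty$, while the computation in the proof of Theorem~\ref{not uniform 1} gives $\nn{[X^n]}_{\A_n\ens{X}}=r(n)^{b_0}=1$ for every $n$. Here $N=0$ and $G=F$, so your scheme demands a representation $G_n(X)=X^n=\sum_{j\le D}\beta_{j,n}X^j$ with $\nn{\beta_{j,n}}_{\A_n}\le s$. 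Comparing $U^0X^n$-coefficients yields
\[
1\;\le\;\max_{0\le j\le\min(D,n)}\,\vv{(\beta_{j,n})_{0,n-j}}\;\le\;\max_{0\le j\le\min(D,n)} s\,r(n)^{-(n-j)}\;=\;s\,r(n)^{\min(D,n)-n},
\]
whence $D\ge n-\log s/\log r(n)$; since $\log r(n)\sim 1/n$, this lower bound tends to $\infty$ with $n$ for any fixed $s\in(1,e)$. So the uniform degree bound you need simply does not exist in this example, and neither the lifts $H_n$ nor the quotients $Q_n=(G_n-H_n)/(T-X)$ can be assembled into an element of $\B\ens{T}$ by your method. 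The paper's telescoping avoids this by never attempting to control the degree of a lift of $G_n(X)$; it controls instead the tail of the quotient $G$ directly.
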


\begin{proof}
The given homomorphism is an isometry by Lemma \ref{uniform convergence}, because $\n{\cdot}_{k \set{r(n)^{-a_i}U^iX^{a_i}}{i \in \N}}$ is multiplicative and $\n{X}_{k \set{r(n)^{-a_i}U^iX^{a_i}}{i \in \N}} = r > 1$.
\end{proof}

\begin{thm}
\label{not uniform 3}
Let $r \in \t{C}_{\t{bd}}(\N,\R)$ with $r(\N) \subset (1,\infty)$. Set
\begin{eqnarray*}
  Y & \coloneqq & \t{Spa} \left( \left( \prod_{n \in \N} k \Set{r(n)^{-a_i}U^iX^{a_i}}{i \in \N} \right)^{\t{ad}} \right) \\
  V & \coloneqq & \set{x \in Y}{\v{(X)_{n \in \N}(x)} \leq 1}.
\end{eqnarray*}
The rational localisation $(\Opn_{Y}(V),\Opn_{Y}^{+}(V))$ is not a uniform affinoid ring over $k^{\t{ad}}$.
\end{thm}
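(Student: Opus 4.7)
My plan is to reduce the statement to a direct analogue of Theorem \ref{not uniform 1}. First, I apply Proposition \ref{presentation} to the Banach $k$-algebra $\B := \prod_{n \in \N} k \set{r(n)^{-a_i}U^iX^{a_i}}{i \in \N}$ and the element $(X)_{n \in \N} \in \B$; this identifies the underlying topological $k$-algebra of $\mathscr{O}_Y(V)$ with $\B \ens{(X)_{n \in \N}}$. By Proposition \ref{almost uniform - affinoid} together with Banach's open mapping theorem (exactly as invoked in the proof of Theorem \ref{not uniform 2}), the affinoid ring $(\mathscr{O}_Y(V), \mathscr{O}_Y^+(V))$ is uniform if and only if $\B \ens{(X)_{n \in \N}}$ is a Banach function algebra over $k$. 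So it suffices to refute the latter.

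To compute norms inside $\B \ens{(X)_{n \in \N}}$, I invoke Lemma \ref{direct product - localisation}: the canonical homomorphism $\iota \colon \B \ens{(X)_{n \in \N}} \to \prod_{n \in \N} (\A_n \ens{X})$, where $\A_n := k \set{r(n)^{-a_i}U^iX^{a_i}}{i \in \N}$, is isometric. Writing $w$ for the image of the constant sequence $(UX)_{n \in \N} \in \B$ in $\B \ens{(X)_{n \in \N}}$, the single-factor norm equality $\n{\varphi(UX)^k}_{\A_n \ens{X}} = r(n)^{a_k}$ extracted from the proof of Theorem \ref{not uniform 1} yields $\n{w^k} = \sup_n r(n)^{a_k} = R^{a_k}$, where $R := \sup_n r(n) \in (1,\infty)$ is finite by the assumption $r \in \t{C}_{\t{bd}}(\N,\R)$. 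Since $a_k / k \to 0$, the spectral radius of $w$ is $\rho(w) = \lim_k R^{a_k/k} = 1$.

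The contradiction is then obtained by mimicking the construction at the end of the proof of Theorem \ref{not uniform 1}. I pick $c \in k^{\times}$ with $\v{c} < 1$ (possible since the valuation of $k$ is non-trivial), choose $N \in \N \backslash \ens{0}$ large enough that $\v{c} R^N > 1$, and set $d_i := \lceil a_i / N \rceil$. Since $\rho(c^{d_i} w^{Ni}) = \v{c}^{d_i}$ tends to $0$, the partial sums of $\sum_i c^{d_i} w^{Ni}$ are Cauchy with respect to the spectral-radius seminorm (by its non-Archimedean property). On the other hand, from $a_{Ni} \geq a_i$ and $d_i \leq a_i/N + 1$ one obtains $\n{c^{d_i} w^{Ni}} = \v{c}^{d_i} R^{a_{Ni}} \geq \v{c} \cdot (\v{c}^{1/N} R)^{a_i}$, which diverges because $\v{c}^{1/N} R > 1$ and $a_i \to \infty$. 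Thus the partial sums are unbounded in norm, so the norm and the spectral-radius seminorm on $\B \ens{(X)_{n \in \N}}$ are not equivalent, and $\B \ens{(X)_{n \in \N}}$ is not a Banach function algebra.

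I expect the main obstacle to be the clean verification of the exact equality $\n{w^k} = R^{a_k}$, not merely an upper bound: this relies on both the isometric (not merely submetric) nature of the embedding supplied by Lemma \ref{direct product - localisation} and the matching lower bound in the single-factor norm computation of Theorem \ref{not uniform 1}. Everything else is essentially a direct transcription of the single-factor argument to the product, with the parameter $r$ replaced by $R$.
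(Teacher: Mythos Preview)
Your proposal is correct and follows essentially the same route as the paper: identify $\mathscr{O}_Y(V)$ with $\B\ens{(X)_{n\in\N}}$ via Proposition \ref{presentation}, transfer norms to the product via Lemma \ref{direct product - localisation}, and then use the norm computation $\n{(UX)^k}=r(n)^{b_k}$ from the proof of Theorem \ref{not uniform 1} to witness non-uniformity through the element $(UX)_{n\in\N}$. The only cosmetic difference is the endgame: the paper concludes by noting $(UX)_{n\in\N}\in(\mathscr{O}_Y(V)^{\circ})^{\t{ac}}\setminus\mathscr{O}_Y(V)^{\circ}$ and invoking Corollary \ref{circ - ac 2}, whereas you spell out the Cauchy-but-unbounded series exactly as in Theorem \ref{not uniform 1}; these are equivalent via Proposition \ref{almost uniform - affinoid}.
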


\begin{proof}
It immediately follows from Proposition \ref{comparison}, Corollary \ref{circ - ac}, and Lemma \ref{direct product - localisation} because the proof of Theorem \ref{not uniform 2} shows $(UX)_{n \in \N} \in (\Opn_{Y}(V)^{\circ})^{\t{ac}} \backslash \Opn_{Y}(V)^{\circ}$.
\end{proof}

This new example is much more significant than $k \set{r^{-a_i}U^iX^{a_i}}{i \in \N} \ens{X}$ for study of Tate acyclicity. For any $f \in k \set{r^{-a_i}U^iX^{a_i}}{i \in \N} \ens{X}$, the multiplication of $T - f \in k \set{r^{-a_i}U^iX^{a_i}}{i \in \N} \ens{X} \ens{T}$ seems to be admissible. Indeed, $k \set{r(n)^{-a_i}U^iX^{a_i}}{i \in \N} \ens{X}$ is isometrically embedded in the completion $K \set{r^{-b_i}U^i}{i \in \N} \subset K[[U]]$ of $K[U]$ with respect to the norm
\begin{eqnarray*}
  \n{\cdot}_{K \Set{r^{-b_i}U^i}{i \in \N}} \colon K[U] & \to & [0,\infty) \\
  \sum_{i = 0}^{\infty} F_i U^i & \mapsto & \sup_{i \in \N} \v{F_i} r^{b_i}
\end{eqnarray*}
by the proof of Theorem \ref{not uniform 2}, where $K/k$ is the extension of complete valuation fields obtained as the completion of the fractional field of $k \ens{X}$ with respect to the Gauss norm of radius $1$. Therefore the computation of the admissibility is reduced to that for $K \set{r^{-b_i}U^i}{i \in \N} \ens{T}$, which is not so pathologic. Such an admissibility works well in the calculation of the $\check{\m{C}}$ech complex for Tate acyclicity, and hence $k \set{r^{-a_i}U^iX^{a_i}}{i \in \N}$ could be still sheafy. On the other hand, the same does not hold for the new example as is shown in the following.

\begin{prp}
\label{not admissible}
Suppose that $\v{k}$ is dense in $[0,\infty)$. Let $\pi \in \t{C}_{\t{bd}}(\N,k)$ be a sequence such that $(\v{\pi(n)})_{n \in \N} \in \t{C}_{\t{bd}}(\N,\R)$ is an increasing sequence converging to $1$. Then for any $r \in (1,\infty)$, the multiplication of $T - (\pi(n)UX)_{n \in \N}$ is not an admissible epimorphism on $(\prod_{n \in \N} k \set{r^{-a_i}U^iX^{a_i}}{i \in \N}) \ens{(X)_{n \in \N}} \ens{T}$.
\end{prp}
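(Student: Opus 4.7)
The plan is to reduce the admissibility question to a single coordinate of the product via the idempotents of $\B \coloneqq \prod_{n \in \N} \A$ with $\A \coloneqq k \set{r^{-a_i} U^i X^{a_i}}{i \in \N}$, and then to construct there an explicit truncated geometric series whose norm blows up while its image under multiplication by $T - \pi(m) UX$ stays bounded. Set $R \coloneqq \B \ens{(X)_{n \in \N}} \ens{T}$, $a \coloneqq (\pi(n) UX)_{n \in \N} \in \B \ens{(X)_{n \in \N}}$, and for each $m \in \N$ let $e_m \coloneqq (\delta_{n, m})_{n \in \N} \in \B$.

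Using Lemma \ref{direct product - localisation} together with the surjectivity of the $m$-th coordinate projection $\B \ens{(X)_{n \in \N}} \twoheadrightarrow \A \ens{X}$, I would construct an isometric $k$-linear injection $\iota_m \colon \A \ens{X} \ens{T} \hookrightarrow R$ of image $e_m R$ satisfying $\iota_m((T - \pi(m) UX) f) = (T - a) \iota_m(f)$ for every $f$. Concretely, the $h$-th coefficient of $\iota_m(\sum_h f_h T^h)$ is obtained by lifting $f_h \in \A \ens{X}$ to $\B \ens{(X)_{n \in \N}}$ and multiplying by $e_m$, giving an element of the product $\prod_n \A \ens{X}$ supported only at the $m$-th coordinate; isometricity follows because all other coordinates vanish and the norms match by Lemma \ref{direct product - localisation}. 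Thus it suffices to exhibit $f_m \in \A \ens{X} \ens{T}$ with $\nn{f_m}_{\A \ens{X} \ens{T}} / \nn{(T - \pi(m) UX) f_m}_{\A \ens{X} \ens{T}} \to \infty$ as $m \to \infty$; applying $\iota_m$ then shows that multiplication by $T - a$ on $R$ has no bounded inverse onto its image, contradicting admissibility.

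For the construction, let $N_m$ be the least integer with $\v{\pi(m)}^{N_m + 1} r^{b_{N_m + 1}} \leq 1$ (it exists because $\v{\pi(m)}^N$ decays exponentially while $r^{b_N}$ grows only polynomially, as $b_N \leq \log_2 N + 1$), and set
\begin{eqnarray*}
  f_m \coloneqq \sum_{k = 0}^{N_m} (\pi(m) UX)^k T^{N_m - k} \in \A \ens{X} [T].
\end{eqnarray*}
Telescoping yields $(T - \pi(m) UX) f_m = T^{N_m + 1} - (\pi(m) UX)^{N_m + 1}$, and combined with $\nn{(\pi(m) UX)^j}_{\A \ens{X}} = \v{\pi(m)}^j r^{b_j}$ (from the proof of Theorem \ref{not uniform 1}) this gives $\nn{(T - \pi(m) UX) f_m}_{\A \ens{X} \ens{T}} = \max \Ens{1, \v{\pi(m)}^{N_m + 1} r^{b_{N_m + 1}}} = 1$. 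For the lower bound, pick integers $k_m \to \infty$ with $\v{\pi(m)}^{k_m} \geq e^{-1}$ (possible since $\v{\pi(m)} \to 1$; for instance $k_m$ of order $-1/\log \v{\pi(m)}$). A scaling comparison, using $b_k \leq \log_2 k + 1$ to bound $N_m$ from above and $b_k \geq \log_2 k$ to bound $\nn{f_m}$ from below, shows $k_m \leq N_m$ for all large $m$, and therefore
\begin{eqnarray*}
  \nn{f_m}_{\A \ens{X} \ens{T}} = \max_{0 \leq k \leq N_m} \v{\pi(m)}^k r^{b_k} \geq e^{-1} r^{b_{k_m}} \to \infty
\end{eqnarray*}
as $b_{k_m} \to \infty$.

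The main obstacle is the simultaneous control of the two scales $k_m$ and $N_m$: the former must remain small enough that $\v{\pi(m)}^{k_m}$ stays bounded below, while the latter must be large enough that $\v{\pi(m)}^{N_m + 1} r^{b_{N_m + 1}} \leq 1$. These scales are of order $1 / \vv{\log \v{\pi(m)}}$ and $(1 / \vv{\log \v{\pi(m)}}) \log(1 / \vv{\log \v{\pi(m)}})$ respectively, and the logarithmic gap between them is opened precisely by the logarithmic growth of $b_k$---the very feature exploited in Theorem \ref{not uniform 1} to make $\A \ens{X}$ barely non-uniform. This is what makes the construction succeed exactly as $\v{\pi(m)} \nearrow 1$.
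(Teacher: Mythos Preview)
Your proof is correct and follows essentially the same strategy as the paper's: both arguments pass to a single coordinate via the idempotent $e_m$ and build a truncated geometric series $\sum_{k=0}^{N}(\pi(m)UX)^{k}T^{N-k}$, whose telescoped image $T^{N+1}-(\pi(m)UX)^{N+1}$ has norm $1$ while an intermediate coefficient $(\pi(m)UX)^{k_m}$ has norm $\v{\pi(m)}^{k_m}r^{b_{k_m}}\to\infty$. The only cosmetic difference is that the paper fixes an arbitrary bound $R$ and produces one witness, whereas you let $m\to\infty$ and make the asymptotic comparison between the scales $k_m\sim 1/\vv{\log\v{\pi(m)}}$ and $N_m$ explicit; your justification that $k_m\le N_m$ eventually is phrased a bit loosely (you speak of bounding $N_m$ from above when you actually need a lower bound), but the inequality follows immediately from $\v{\pi(m)}^{k_m}r^{b_{k_m}}\ge e^{-1}r^{b_{k_m}}>1$ for large $m$, so nothing is missing.
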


\begin{proof}
Put $\A \coloneqq k \set{r^{-a_i}U^iX^{a_i}}{i \in \N}$. By Lemma \ref{direct product - localisation} and the proof of Theorem \ref{not uniform 1}, the underlying $k$-algebra of $(\prod_{n \in \N} \A) \ens{(X)_{n \in \N}}$ is embedded in the underlying $k$-algebra of $\prod_{n \in \N} k[[X,U]]$, and hence $(\pi(n)UX)_{n \in \N}$ is not a zero divisor of $(\prod_{n \in \N} \A) \ens{(X)_{n \in \N}}$. Let $R > 0$. Since $(\v{\pi(n)})_{n \in \N}$ converges to $1$, there is an $(i_0,n_0) \in \N \times \N$ such that $\v{\pi(n_0)}^{i_0} r^{b_{i_0}} > R^{-1}$. Since $\rho_{\A}(\pi(n_0)UX) = \v{\pi(n_0)} < 1$ by the proof of Theorem \ref{not uniform 1}, $\pi(n_0)UX$ is topologically nilpotent and hence there is an $N > i_0$ such that $\n{(\pi(n_0)UX)^N} \leq 1$. Denote by $e_{n_0} \in (\prod_{n \in \N} \A) \ens{(X)_{n \in \N}}$ the image of $1 \in \A \ens{X}$ by the zero-extension $\A \ens{X} \hookrightarrow \prod_{n \in \N} (\A \ens{X})$ outside the $n_0$-th entry. It lies in the image of $(\prod_{n \in \N} \A) \ens{X}$. Then we have $\n{\sum_{i = 0}^{N-1} (\pi(n_0)UX)^{N-1-i} e_{n_0} T^i}_{(\prod_{n \in \N} \A) \ens{X} \ens{T}} \geq \n{(\pi(n_0)UX)^{i_0}}_{\A \ens{X}} > R^{-1}$, and hence
\begin{eqnarray*}
  & & \nn{\left( T - (\pi(n)UX)_{n \in \N} \right) \sum_{i = 0}^{N-1} (\pi(n_0)UX)^{N-1-i} e_{n_0} T^i}_{\left( \prod_{n \in \N} \A \right) \ens{X} \ens{T}} \\
  & = & \nn{e_{n_0}T^N - (\pi(n_0)UX)^N e_{n_0}}_{\left( \prod_{n \in \N} \A \right) \ens{X} \ens{T}} = 1.
\end{eqnarray*}
It implies that the inverse $(T - (\pi(n)UX)_{n \in \N})^{-1}$ of the bounded bijective $k$-linear homomorphism $(\prod_{n \in \N} k \set{r^{-a_i}U^iX^{a_i}}{i \in \N}) \ens{(X)_{n \in \N}} \ens{T} \to (T - (\pi(n)UX)_{n \in \N})$ given by the multiplication of $T - (\pi(n)UX)_{n \in \N}$ has operator norm greater than $R$. Thus $(T - (\pi(n)UX)_{n \in \N})^{-1}$ is not bounded, and hence the multiplication of $T - (\pi(n)UX)_{n \in \N}$ is not admissible.
\end{proof}

The proof of Proposition \ref{not admissible} implies that $(\pi(n)UX)_{n \in \N}$ is not bounded, while $\pi(n)UX$ is topologically nilpotent for any $n \in \N$. In other word, topologically nilpotent elements converges to an almost bounded element in the strong topology. This phenomenon help us to verify that the $\check{\m{C}}$ech complex for Tate acyclicity is not exact. Finally we achieve the main result.

\begin{thm}
\label{not sheafy}
Suppose that $\v{k}$ is dense in $[0,\infty)$. For any $r \in (1,\infty)$, the strongly uniform affinoid ring $(\prod_{n \in \N} k \set{r^{-a_i}U^iX^{a_i}}{i \in \N})^{\t{ad}}$ over $k^{\t{ad}}$ is not sheafy.
\end{thm}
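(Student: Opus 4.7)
By Proposition \ref{presentation} and Lemma \ref{direct product - localisation}, the rational subdomain $V = Y \langle (X)_{n \in \N} \rangle \subset Y := \t{Spa}(\A^{\t{ad}})$ (with $\A = \prod_n \A_n$, $\A_n = k\ens{r^{-a_i}U^iX^{a_i} : i \in \N}$) has sections $\B := \mathscr{O}_Y(V)^{\triangleright} = \A \ens{(X)_{n \in \N}}$ isometrically embedded in $\prod_n \A_n \ens{X}$. Fixing $\pi$ as in Proposition \ref{not admissible} and setting $a = (\pi(n) U X)_{n \in \N} \in \B$, this element is topologically nilpotent in each fiber $\A_n \ens{X}$ but is only almost bounded (not power-bounded) in $\B$: this is precisely the phenomenon that drives Theorem \ref{not uniform 3} and Proposition \ref{not admissible}.

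My strategy is to show non-exactness of the $\check{\t{C}}$ech complex
\begin{eqnarray*}
0 \to \B \to \B \ens{a} \oplus \B \ens{a^{-1}} \to \B \ens{a, a^{-1}} \to 0
\end{eqnarray*}
associated to the Laurent covering $V = V \langle a \rangle \cup V \langle a^{-1} \rangle$, whose three sections are identified via Proposition \ref{presentation}. Since $V$ is a rational subdomain of $Y$, non-exactness for this covering is enough to witness non-sheafiness of $\A^{\t{ad}}$. I will argue that the rightmost arrow fails to be surjective. In each fiber $\A_n \ens{X}$, because $\pi(n) U X$ is topologically nilpotent, the corresponding fibrewise Laurent splitting is unobstructed; the fibrewise splittings assemble to a natural splitting map on $\prod_n \A_n \ens{X} \ens{(\pi(n) U X)^{\pm 1}}$. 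However, the submetric embedding of $\B \ens{a, a^{-1}}$ into this product enforces a uniform cross-fiber bound that the fibrewise negative-power parts violate: their norms grow like $r^{a_k}$ as $|\pi(n)| \uparrow 1$, by the same estimates that underlie Proposition \ref{not admissible}.

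Concretely, I would mimic the witness $\sum_{i = 0}^{N-1} (\pi(n_0) U X)^{N-1-i} e_{n_0} T^i$ from the proof of Proposition \ref{not admissible}, where $e_{n_0} \in \B$ is the image of the characteristic idempotent of the $n_0$-th factor in $\prod_n \A_n$. Specializing $T$ to $a$ and letting $n_0$ range, the resulting sequence produces an element $h$ whose class in $\B \ens{a, a^{-1}}$ converges (this uses Lemma \ref{direct product - localisation} and the compatibility of rational localisations with the product embedding), while any candidate decomposition $h = f_1 - f_2$ with $f_1 \in \B \ens{a}$, $f_2 \in \B \ens{a^{-1}}$ would force $f_2$ to have the fibrewise principal parts dictated by topological nilpotence of $\pi(n) U X$, whose norms are unbounded across $n$.

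\textbf{Main obstacle.} The hard part is verifying both that the assembled $h \in \B \ens{a, a^{-1}}$ converges (using submetricity of the embedding $\B \ens{a, a^{-1}} \hookrightarrow \prod_n \A_n \ens{X} \ens{(\pi(n) U X)^{\pm 1}}$ and the boundedness of the fibrewise Laurent tails for each fixed $n$) and that its putative negative-power part in $\B \ens{a^{-1}}$ cannot be bounded globally. The second step is where the non-admissibility from Proposition \ref{not admissible} is deployed: the quantitative estimate $\nn{(\pi(n_0) U X)^{i_0}}_{\A_{n_0}\ens{X}} = |\pi(n_0)|^{i_0} r^{b_{i_0}}$ with $|\pi(n_0)| \uparrow 1$ and $a_k \sim \log_2 k$ growing to infinity forces any candidate $f_2$ to have unbounded norm, which translates the non-admissibility of $T - a$ on $\B \ens{T}$ into the non-surjectivity of the $\check{\t{C}}$ech differential.
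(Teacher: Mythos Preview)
Your proposal targets the wrong cohomological degree. Non-surjectivity of $\B\ens{a}\oplus\B\ens{a^{-1}}\to\B\ens{a,a^{-1}}$ is failure of Tate acyclicity in degree $1$, not failure of the sheaf axiom, which is non-exactness at the middle term. The paper's definition of ``sheafy'' is literally that the structure presheaf is a sheaf, so you would need an additional input of the type ``sheafy $\Rightarrow$ acyclic for rational covers'' (a theorem of Kedlaya--Liu / Buzzard--Verberkmoes) to close the argument, and you do not invoke it. The paper instead constructs an explicit pair $(G_{+}(\Pi\U\X),G_{-}((\Pi\U\X)^{-1}))\in\B\ens{\X}\ens{\Pi\U\X}\times\B\ens{\X}\ens{(\Pi\U\X)^{-1}}$ with coinciding images in $\B\ens{\X}\ens{(\Pi\U\X)^{\pm 1}}$, and then shows by a direct estimate that $G_{+}(\Pi\U\X)$ cannot come from $\B\ens{\X}$. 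That is a gluing failure, hence non-sheafiness on the nose.

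There is also a structural problem with your fibrewise strategy. Since $\pi(n)UX$ is topologically nilpotent in $\A_n\ens{X}$, the element $1-\pi(n)UX\cdot T$ is a unit in $\A_n\ens{X}\ens{T}$, so $\A_n\ens{X}\ens{(\pi(n)UX)^{-1}}=0$ and $\A_n\ens{X}\ens{(\pi(n)UX)^{\pm 1}}=0$ for every $n$. Thus the ``submetric embedding of $\B\ens{a,a^{-1}}$ into $\prod_n \A_n\ens{X}\ens{(\pi(n)UX)^{\pm 1}}$'' that you want to exploit is a map to the zero ring and gives no information; likewise any attempt to control $f_2\in\B\ens{a^{-1}}$ through its fibrewise images is vacuous. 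The product comparison of Lemma \ref{direct product - localisation} is available for $\B\ens{\X}$ but does \emph{not} persist after the Laurent localisation in $a$. The paper circumvents this by working upstairs in $\B\ens{\X}\ens{T}$ and $\B\ens{\X}\ens{T^{-1}}$: it writes $(T-\Pi\U\X)F=G_{+}-G_{-}$ for a carefully chosen formal $F$ lying only in $\prod_n k[[U,X,T]][T^{-1}]$, checks that $G_{\pm}$ themselves land in the honest Tate algebras, and then uses the coefficientwise inequalities from the proof of Theorem \ref{not uniform 1} (the computation $\n{(UX)^i}_{\A\ens{X}}=r^{b_i}$) to rule out any $f\in\B\ens{\X}$ with $G_{+}(\Pi\U\X)=f$. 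Your witness built from the $e_{n_0}$'s is morally the same $F$, but you need to push it through the middle term of the complex, not the last one.
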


\begin{proof}
Take a $\pi \in \t{C}_{\t{bd}}(\N,k)$ such that $\v{\pi(\N)} \subset (\sqrt[]{\mathstrut r},1)$ and $(\v{\pi(n)})_{n \in \N} \in \t{C}_{\t{bd}}(\N,\R)$ is an increasing sequence converging to $1$. Put
\begin{eqnarray*}
  & &
  \begin{array}{rclcrcl}
    \A & \coloneqq & k \set{r^{-a_i}U^iX^{a_i}}{i \in \N}, & & \X & \coloneqq & (X)_{n \in \N} \in \B \\
    \B & \coloneqq & \prod_{n \in \N} \A, & & \U \X & \coloneqq & (UX)_{n \in \N} \in \B
  \end{array} \\
  & & \Pi \coloneqq \pi = (\pi(n))_{n \in \N} \in \t{C}_{\t{bd}}(\N,k) \hookrightarrow \t{C}_{\t{bd}}(\N,\A) = \B.
\end{eqnarray*}
It suffices to verify that the $\check{\m{C}}$ech complex
\begin{eqnarray*}
  0 \to \B \Ens{\X} \to \prod_{\sigma \in \Ens{\pm 1}} \B \Ens{\X} \Ens{(\Pi \U \X)^{\sigma}} \to \B \ens{\X} \ens{(\Pi \U \X)^{\pm 1}}
\end{eqnarray*}
is not exact. Since $(\v{\pi(n)})_{n \in \N}$ is an increasing sequence, so is $(\sup_{i \in \N} \v{\pi(n)}^i r^{b_i})_{n \in \N}$. Put $R_n \coloneqq \sup_{i \in \N} \v{\pi(n)}^i r^{b_i}$ for each $n \in \N$. Note that $(R_n)_{n \in \N} \in (1,\infty)^{\N}$ because $(\v{\pi(n)})_{n \in \N} \in (0,1)^{\N}$ and $(b_i)_{i \in \N} \in O(\log i)$. For each $n \in \N$, denote by $i_n \in \N \backslash \ens{0}$ the smallest integer such that $\v{\pi(n)}^{i_n} r^{b_{i_n}} = R_n$, by $l_n \in \N$ the greatest integer such that $\v{\pi(n)}^{-l_n} \leq \sqrt[]{\mathstrut R_n}$, and by $N_n \in \N$ the smallest integer such that $\v{\pi(n)}^{N_n} R_n \leq 1$ and $\v{\pi(n)}^{N_n} r \leq 1$. Since $\v{\pi(n)}^2 \geq \v{\pi(0)}^2 > r$, $l_n \geq 1$ for any $n \in \N$. Set
\begin{eqnarray*}
  F \coloneqq \left( \pi(n)^{l_n} \sum_{i = -N_n}^{i_n} (\pi(n)UX)^{i_n - i} T^i \right)_{n \in \N} \in \prod_{n \in \N} \left( k[[U,X,T]][T^{-1}] \right).
\end{eqnarray*}
Since $\v{\pi(n)}^{-1} \sqrt[]{\mathstrut R_n} \leq \v{\pi(n)}^{i_n}(\pi(n)UX)^{i_n} \leq \sqrt[]{\mathstrut R_n}$ for any $n \in \N$ by definition, $F$ does not lie in the image of $\B \ens{\X} \ens{T,T^{-1}}$. We have
\begin{eqnarray*}
  (T - \Pi \U \X) F = \left( \pi(n)^{l_n} \left( T^{i_n + 1} - (\pi(n)UX)^{N_n + i_n + 1} T^{-N_n} \right) \right)_{n \in \N}.
\end{eqnarray*}
For each $n \in \N$, denote by $e_n \in \B$ the image of $1 \in \A$ by the zero-extension $\A \hookrightarrow \B$ outside the $n$-th entry. Since $\v{\pi(n)}^{l_n} \leq \v{\pi(n)}^{-1} \sqrt[]{\mathstrut R_n}^{\ -1} \stackrel{n \to \infty}{\longrightarrow} 0$, $(T - \Pi \U \X) F$ lies in the image of $\B \ens{\X} \ens{T,T^{-1}}$, and as an element of $\B \ens{\X} \ens{T,T^{-1}}$, it is the limit of the sequence
\begin{eqnarray*}
  \left( (T - \Pi \U \X) F \sum_{l = 1}^{m} e_l \right)_{m \in \N} = \left( \sum_{n = 1}^{m} \pi(n)^{l_n} \left( e_n T^{i_n + 1} - (\pi(n)UX)^{N_n + i_n + 1} e_n T^{-N_n} \right) \right)_{m \in \N}
\end{eqnarray*}
in $\B \ens{\X} \ens{T,T^{-1}}$. Each entry is an element of $(T - \Pi \U \X) \subset \B \ens{\X} \ens{T,T^{-1}}$ by the computation in the proof of Theorem \ref{not admissible}, and hence we have $(T - \Pi \U \X) F \in (T - \Pi \U \X)^{\hat{}} \subset \B \ens{\X} \ens{T,T^{-1}}$. Set $G_{+} \coloneqq \sum_{n = 0}^{\infty} \pi(n)^{l_n} e_n T^{i_n + 1} \in \B \ens{\X} \ens{T}$ and $G_{-} \coloneqq \sum_{n = 0}^{\infty} \pi(n)^{l_n} (\pi(n)UX)^{N_n + i_n + 1} e_n T^{-N_n} \in \B \ens{\X} \ens{T^{-1}}$. Then $G_{+} - G_{-} = (T - \Pi \U \X)F$, and hence $G_{+}(\Pi \U \X) = G_{-}((\Pi \U \X)^{-1})$ as elements of $\B \ens{\X} \ens{(\Pi \U \X)^{\pm 1}}$. We verify that $G_{+}(\Pi \U \X)$ does not lie in the image of $\B \ens{\X}$ in $\B \ens{\X} \ens{\Pi \U \X}$. Let $f = (f_n)_{n \in \N} \in \B \ens{\X}$, and assume that $G_{+} - f$ lies in $(T - \Pi \U \X)^{\hat{}} \subset \B \ens{\X} \ens{T}$. Set $F_{+} \coloneqq (\pi(n)^{l_n} \sum_{i = 0}^{i_n} (\pi(n)UX)^{i_n - i} T^i)_{n \in \N} \in \prod_{n \in \N} k[[U,X,T]]$. We have $(T - \Pi \U \X)F_{+} = (\pi(n)^{l_n}(T^{i_n + 1} - (\pi(n)UX)^{i_n + 1}))_{n \in \N} = G_{+} - (\pi(n)^{l_n} (\pi(n)UX)^{i_n + 1})_{n \in \N} \in \B \ens{\X} \ens{T}$. By $G_{+} - f \in (T - \Pi \U \X)^{\hat{}}$, there is an $H = \sum_{h = 0}^{\infty} (H_{h,n})_{n \in \N} T^h \in \B \ens{\X} \ens{T}$ with $\n{G_{+} - f - (T - \Pi \U \X)H}_{\B \ens{T}} < \v{\pi(n)}^{N_n + l_n} R_n$. By $\n{\pi(n)^{l_n}(\pi(n)UX)^{i_n + 1})}_{\A} \geq \v{\pi(n)}^{l_n + 1} R_n \geq \v{\pi(n)} \sqrt[]{\mathstrut R_n} \stackrel{n \to \infty}{\longrightarrow} \infty$, there is an $n \in \N$ such that $\n{\pi(n)^{l_n}(\pi(n)UX)^{i_n + 1})}_{\A \ens{X}} > \max \ens{\n{f}_{\B \ens{\X}}, \n{H}_{\B \ens{\X} \ens{T}}}$ and $l_n > 1$. Set $H_n \coloneqq \sum_{h = 0}^{\infty} H_{h,n} T^h \in \A \ens{X} \ens{T}$, $F_{+,n} \coloneqq \pi(n)^{l_n} \sum_{i = 0}^{i_n} (\pi(n)UX)^{i_n - i} T^i \in \A \ens{X}[T]$, and $H_{h,n} = \sum_{i = 0}^{\infty} \sum_{j = b_i}^{\infty} H_{h,n,i,j} U^i X^j$ for each $h \in \N$. We obtain
\begin{eqnarray*}
  & & \nn{\pi(n)^{l_n} (\pi(n)UX)^{i_n + 1} - f_n + (T - \pi(n)UX)(F_{+,n} - H_n)}_{\A \Ens{X} \Ens{T}} \\
  & = & \nn{\pi(n)^{l_n} T^{i_n + 1} - f_n - (T - \pi(n)UX) H_n}_{\A \Ens{X} \Ens{T}} \\
  & \leq & \nn{G_{+} - f - (T - \Pi \U \X)H}_{\B \Ens{\X} \Ens{T}} < \vv{\pi(n)}^{N_n + l_n} R_n.
\end{eqnarray*}
It implies the inequalities
\begin{eqnarray*}
  & & \nn{1 - H_{i_n,n} + \pi(n)UX H_{i_n+1,n}}_{\A \Ens{X}} < \vv{\pi(n)}^{N_n + l_n} R_n \\
  & & \nn{- H_{h,n} + \pi(n)UX H_{h+1,n}}_{\A \Ens{X}} < \vv{\pi(n)}^{N_n + l_n} R_n \\
  & & \nn{\pi(n)^{l_n} (\pi(n)UX)^{i_n + 1} - f_n + (\pi(n)UX)^{i_n + 1} - (\pi(n)UX) H_{0,n})}_{\A \Ens{X}} < \vv{\pi(n)}^{N_n + l_n} R_n
\end{eqnarray*}
hold for any $h \in \N \backslash \ens{i_n}$. In particular, we get $\v{1 - H_{N_n,n,0,0}} \leq \v{\pi(n)}^{N_n + l_n} R_n \leq \v{\pi(n)}^{l_n} < 1$ and $\v{H_{N_n,n,0,0}} = 1$. Considering the coefficient of $U^i X^i$ in $- H_{i_n - i,n} + \pi(n)UX H_{i_n - i + 1,n}$ inductively on $i \in \N \cap [1,i_n]$, we obtain $\n{(\pi(n)UX)^{i_n} - (UX)^{i_n} H_{0,n,i_n,i_n}}_{\A \ens{X}} < \v{\pi(n)}^{N_n + l_n} R_n < R_n = \n{(\pi(n)UX)^{i_n}}_{\A \ens{X}}$ and $\n{(UX)^{i_n} H_{0,n,i_n,i_n}}_{\A \ens{X}} = \n{(\pi(n)UX)^{i_n}}_{\A \ens{X}} = R_n$. Therefore we get
\begin{eqnarray*}
  & & \nn{(\pi(n)UX)^{i_n + 1} - (\pi(n)UX) H_{0,n,i_n,i_n})}_{\A \Ens{X}} \\
  & \leq & \nn{\pi(h)UX}_{\A \Ens{X}} \nn{(\pi(n)UX)^{i_n} - (UX)^{i_n} H_{0,n,i_n,i_n}}_{\A \Ens{X}} < \vv{\pi(n)}^{N_n + l_n + 1} r R_n \\
  & \leq & \vv{\pi(n)}^{l_n + 1} R_n = \vv{\pi(n)}^{l_n + 1} \nn{(\pi(n)UX)^{i_n}}_{\A \Ens{X}} \leq \nn{\pi(n)^{l_n} (\pi(n)UX)^{i_n + 1}}_{\A \Ens{X}}
\end{eqnarray*}
and hence
\begin{eqnarray*}
  & & \nn{\pi(n)^{l_n} (\pi(n)UX)^{i_n + 1} - f_n + (\pi(n)UX)^{i_n + 1} - (\pi(n)UX) H_{0,n})}_{\A \Ens{X}} \\
  & \geq & \nn{\pi(n)^{l_n} (\pi(n)UX)^{i_n + 1}}_{\A \Ens{X}} \geq \vv{\pi(n)}^{l_n + 1} R_n
\end{eqnarray*}
because $\n{f_n}_{\A \ens{X}} \leq \n{f}_{\B} < \nn{\pi(n)^{l_n} (\pi(n)UX)^{i_n + 1}}_{\A \Ens{X}}$. It contradicts $\n{\pi(n)^{l_n} (\pi(n)UX)^{i_n + 1} - f_n + (\pi(n)UX)^{i_n + 1} - (\pi(n)UX) H_{0,n})}_{\A \ens{X}} < \v{\pi(n)}^{N_n + l_n} R_n$. Thus $G_{+}(\Pi \U \X)$ does not lie in the image of $\B \Ens{\X}$. We conclude that the sequence
\begin{eqnarray*}
  0 \to \B \Ens{\X} \to \prod_{\sigma \in \Ens{\pm 1}} \B \Ens{\X} \Ens{(\Pi \U \X)^{\sigma}} \to \B \ens{\X} \ens{(\Pi \U \X)^{\pm 1}}
\end{eqnarray*}
is not exact, and $\B^{\t{ad}}$ is not sheafy.
\end{proof}

\subsection{Affirmative Facts}
\label{Affirmative Facts for Adic Spaces}

In this subsection, let $K$ denote a topological field whose topology is given by a complete valuation of height $1$. Such a complete valuation of height $1$ is unique up to equivalences because its valuation ring coincides with $O_k \subset K$, and we fix a valuation $\v{\cdot} \colon K \to [0,\infty)$ of height $1$ corresponding to $O_k$ so that we can apply results of a complete valuation field. We deal with several affirmative facts deeply related to perfectoid theory. For details about perfectoid theory, see \cite{Sch}.

\begin{dfn}
\label{locally uniform}
An affinoid ring $\A$ over $K$ is said to be {\it locally uniform} if for any rational domain $U \subset X \coloneqq \t{Spa}(\A)$, the affinoid ring $(\Opn_{X}(U),\Opn_{X}^{+}(U))$ over $K$ is uniform.
\end{dfn}

The local uniformity implies the uniformity because the completeness of $\A^{\triangleright}$ implies $\A \cong (\Opn_{X}(X),\Opn_{X}^{+}(X))$ by the universality of a rational domain (\cite{Hub2} (1.2)). It is a local property by definition in the sense that an affinoid ring $\A$ is locally uniform if and only if every rational covering $\U$ of $X = \t{Spa}(\A)$ satisfies that $(\Opn_{X}(U),\Opn_{X}^{+}(U))$ is locally uniform for any $U \in \U$. We remark that it is strictly stronger than the uniformity by Theorem \ref{not uniform 2}. The class of locally uniform affinoid rings contains all the affinoid algebras associated to reduced Banach $K$-algebras topologically of finite type and perfectoid affinoid algebras over perfectoid fields.

\begin{thm}[Tate's Acyclicity]
\label{acyclicity 3}
Let $\A$ be a locally uniform affinoid ring over $K$. Then for any rational covering $\U$ of $X = \t{Spa}(\A)$, the $\check{\m{C}}$ech complex
\begin{eqnarray*}
  0 \to \A^{\triangleright} \to \prod_{U \in \U}\t{H}^0(U, \Opn_X) \to \prod_{U,U' \in \U}\t{H}^0(U \cap U', \Opn_X) \to \cdots
\end{eqnarray*}
is an exact sequence of complete topological $K$-vector spaces such that the images endowed with the relative topologies are canonically homeomorphic to the corresponding cokernels endowed with the quotient topology.
\end{thm}

The following is just imitating the proof of \cite{BGR} Proposition 8.2.2/5.

\begin{proof}
For each uniform affinoid ring $\B$ over $K$, we equip $\B^{\triangleright}$ with $\n{\cdot}_{\B} \coloneqq \rho_{\B}$ introduced in \S \ref{Affinoid Rings}, and regard it as a uniform Banach $K$-algebra. We deal with rational localisations of adic spectra comparing those of Berkovich spectra by Proposition \ref{comparison}. The statement on the topologies of the images and the cokernels is equivalent to the admissibility of the sequence as a complex of Banach $K$-vector spaces. Since $\A$ is locally uniform, $\A^{\triangleright}$ is a strongly Banach function algebra over $k$ with respect to a norm which gives its original topology by Corollary \ref{spectral radius} and Proposition \ref{comparison}. We remark that for any rational domain $V \subset X$, the restriction of a cyclic covering, a strongly cyclic covering, or a special covering (Definition \ref{special}) of $X$ to $V$ is a ration covering of $V$ of the same type. If $\U$ is special, then the assertion follows from Theorem \ref{acyclicity 2}. Hence if $\U$ is strongly cyclic, then the assertion follows from Lemma \ref{refinement} (iii) and \cite{BGR} Corollary 8.1.4/3. Therefore if $\U$ is a cyclic covering, then the assertion follows from Lemma \ref{refinement} (ii) and \cite{BGR} Corollary 8.1.4/3. Consequently, for a general rational covering $\U$, the assertion follows from Lemma \ref{refinement} (i) and \cite{BGR} Corollary 8.1.4/3.
\end{proof}

\begin{thm}
\label{sheafy}
Every locally uniform affinoid ring over $K$ is sheafy.
\end{thm}

We remark that Kevin Buzzard and Alain Verberkmoes worked independently on this problem, and found the same result in \cite{BV}.

\begin{proof}
Since the local uniformity is a local property, it suffices to verify the exactness of the $\check{\m{C}}$ech complex of sections associated to a rational covering of the total space, and it directly follows from Theorem \ref{acyclicity 3}.
\end{proof}

Theorem \ref{sheafy} gives an alternative proof of the sheaf property of a perfectoid affinoid algebra over a perfectoid field. Peter Scholze verified the local uniformity in \cite{Sch} Corollary 6.8.\ before the proof of the sheaf condition in \cite{Sch} Proposition 6.14. The proof of the sheaf condition is done in several steps containing the tilting technique for reducing it to the case where the characteristic of the base field is positive. On the other hand, this alternative proof works directly for any characteristic. Now we introduce a notion of almost acyclicity. For details of almost mathematics, see \cite{GR}.

\begin{dfn}
Suppose that $\v{K}$ is dense in $[0,\infty)$. For an affinoid ring $\A$ over $K$, a finite rational covering $\set{\t{Spa}(R_i,R_i^{+}) \hookrightarrow X}{i \in I}$ of $X \coloneqq \t{Spa}(\A)$ is said to be {\it almost $2$-acyclic} if the $\check{\m{C}}$ech complex
\begin{eqnarray*}
  0 \to (\A^{\triangleright})^{\circ} \to \prod_{i_1 \in I} R_{i_1}^{\circ} \to \prod_{i_1,i_2 \in I} \left( R_{i_1} \hat{\otimes}_K R_{i_2} \right)^{\circ} \to \prod_{i_1,i_2,i_3 \in I} \left( R_{i_1} \hat{\otimes}_K R_{i_2} \hat{\otimes}_K R_{i_3} \right)^{\circ}
\end{eqnarray*}
is almost exact as a complex of $O_k$-modules.
\end{dfn}

We remark that even if the image of $O_k$ is not contained in $\A^{+}$, it is contained in $\A^{\circ}$ because every bounded homomorphism sends a bounded subset to a bounded subset. For an almost $2$-acyclic rational covering $\set{U_i \hookrightarrow \t{Spa}(\A)}{i \in I}$ of $\t{Spa}(\A)$, the $\check{\m{C}}$ech complex
\begin{eqnarray*}
  0 \to \A^{\triangleright} \to \prod_{i_1 \in I} R_{i_1} \to \prod_{i_1,i_2 \in I} R_{i_1} \hat{\otimes}_K R_{i_2} \to \prod_{i_1,i_2,i_3 \in I} R_{i_1} \hat{\otimes}_K R_{i_2} \hat{\otimes}_K R_{i_3}
\end{eqnarray*}
is exact because $K$ is flat over $O_k$ and every divisible almost zero $O_k$-module is zero. In the following, suppose that $K$ is a perfectoid field.

\begin{thm}
\label{locally perfectoid}
Let $\A$ be a sheafy affinoid ring over $K$. Then the following are equivalent:
\begin{itemize}
\item[(i)] The affinoid ring $\A$ is a perfectoid affinoid $K$-algebra.
\item[(ii)] There is a rational covering of $\t{Spa}(\A)$ consisting of affinoid perfectoid spaces, and every rational covering of $\t{Spa}(\A)$ is almost $2$-acyclic.
\item[(iii)] There is an almost $2$-acyclic rational covering of $\t{Spa}(\A)$ consisting of affinoid perfectoid spaces.
\end{itemize}
\end{thm}

\begin{proof}
The condition (i) implies (ii) by \cite{Sch} Proposition 6.14. The condition (ii) immediately implies (iii). Therefore it suffices to verify that (iii) implies (i). Let $\A$ be a sheafy affinoid ring over $K$ such that $X = \t{Spa}(\A)$ admits an almost $2$-acyclic rational covering $\set{U_i \subset X}{i \in I}$ by affinoid perfectoid spaces $U_i = \t{Spa}(S_i,S_i^{+})$. Put
\begin{eqnarray*}
  C & = & ((C(-1),C(0),C(1),C(2),C(3)),d^{\bullet}) \\
  & \coloneqq & \left( 0 \to \A^{\triangleright} \to \prod_{i_1 \in I} S_{i_1} \to \prod_{i_1,i_2 \in I} S_{i_1} \hat{\otimes}_k S_{i_2} \to \prod_{i_1,i_2,i_3 \in I} S_{i_1} \hat{\otimes}_k S_{i_2} \hat{\otimes}_k S_{i_3} \right) \\
  \tau C & = & ((C(-1),C(0),C(1),C(2)),d^{\bullet}) \coloneqq \left( 0 \to \A^{\triangleright} \to \prod_{i_1 \in I} S_{i_1} \to \prod_{i_1,i_2 \in I} S_{i_1} \hat{\otimes}_k S_{i_2} \right)
\end{eqnarray*}
Then $C^{\circ} \subset C$ is almost exact by the assumption, and hence $C$ is exact. Since $C^{\circ}$ is a subcomplex of $C$, it is exact at $C(0)^{\circ}$. By Banach's open mapping theorem (\cite{Bou} Theorem I.3.3/1), the exactness of $C$ at $C(1)$ implies that $C(0)$ is isomorphic to $\ker(d^1)$, and hence $C(0)^{\circ}$ is isomorphic to $\ker(d^1)^{\circ} = \ker(d^1) \cap C(1)^{\circ}$, where $\ker(d^1)$ is regarded as a Banach $k$-algebra because it is a Banach $k$-subalgebra of $C(1)$. Therefore $C^{\circ}$ is exact at $C(1)^{\circ}$, and $(\A^{\triangleright})^{\circ} = C(0)^{\circ}$ is bounded again by Banach's open mapping theorem. Since the functor $(\cdot)^{\t{a}} \colon M \rightsquigarrow M^{\t{a}}$ is exact, $C^{\circ \t{a}}$ is exact. Let $p > 0$ denote the characteristic of the residue field of $K$ with respect to the valuation ring $O_k$. Take a uniformiser $\varpi \in K$ with $\v{p} < \v{\varpi} < 1$ which admits a $p$-power root $\varpi^{1/p} \in K$. Taking the cohomology of the exact sequence $0 \to C^{\circ \t{a}} \stackrel{\varpi}{\longrightarrow} C^{\circ \t{a}} \to C^{\circ \t{a}}/\varpi \to 0$, we obtain $\t{H}^i(\tau C^{\circ \t{a}}/\varpi) = 0$ for $i = 0,1$ by the almost exactness of $\tau C^{\circ}$. Thus $(\tau C^{\circ \t{a}})/\varpi$ is exact.

\vspace{0.2in}
We obtained the exactness of $\tau C^{\circ \t{a}}/\varpi$. Replacing $\varpi$ by $\varpi^{1/p}$, we also obtain the exactness of $\tau C^{\circ \t{a}}/\varpi^{1/p}$. Consider the diagram
\begin{eqnarray*}
\begin{CD}
  0 @>>> C(0)^{\circ \t{a}}/\varpi^{1/p} @>>> C(1)^{\circ \t{a}}/\varpi^{1/p} @>>> C(2)^{\circ \t{a}}/\varpi^{1/p} \\
  @.     @VVV                                 @VVV                                 @VVV \\
  0 @>>> C(0)^{\circ \t{a}}/\varpi       @>>> C(1)^{\circ \t{a}}/\varpi       @>>> C(2)^{\circ \t{a}}/\varpi,
\end{CD}
\end{eqnarray*}
where each vertical arrow is the Frobenius homomorphism. Since each component of $C(1)$ and $C(2)$ is a perfectoid $K$-algebra, the second and third vertical arrows are isomorphisms by \cite{Sch} Proposition 5.5. Therefore the first vertical arrow is an isomorphism by five lemma. It implies that $C(0)^{\circ \t{a}}$ is a perfectoid $K^{\circ \t{a}}$-algebra, and hence $R = C(0)$ is a perfectoid $K$-algebra by \cite{Sch} Lemma 5.6.
\end{proof}

\begin{crl}
Suppose that $K$ is of characteristic $p > 0$. Then a perfectoid space over $K$ is an affinoid perfectoid space if and only if it is an affinoid space.
\end{crl}

We note that many people simultaneously worked on this problem. For example, Yoichi Mieda gave me a proof in personal communication directly verifying the uniformity and the perfectness of the global section. Kevin Buzzard and Alain Verberkmoes also gave a proof under the assumption of the local uniformity in \cite{BV}.

\begin{proof}
The necessary implication is obvious. For the sufficient implication, it is reduced to the case where an affinoid space admits a covering of the form in Corollary \ref{acyclicity} consisting of two affinoid perfectoid spaces by the same argument in Theorem \ref{sheafy}. Let $X = \t{Spa}(\A)$ be a perfectoid space which is an affinoid space, and $f \in \A^{\triangleright}$ an element such that $\A^{\triangleright} \ens{f}$ and $\A^{\triangleright} \ens{f^{-1}}$ are perfectoid $K$-algebras. The $\check{\m{C}}$ech complex
\begin{eqnarray*}
  C & = & ((C(-1),C(0),C(1),C(2),C(3)),d^{\bullet}) \\
  & \coloneqq & \left( 0 \to \A^{\triangleright} \to \A^{\triangleright} \Ens{f} \times \A^{\triangleright} \Ens{f^{-1}} \to \A^{\triangleright} \Ens{f,f^{-1}} \to 0 \right)
\end{eqnarray*}
is exact by Corollary \ref{acyclicity}. Therefore by the same argument as in the proof of Theorem \ref{locally perfectoid}, $C^{\circ}$ is almost exact at $C(0)^{\circ}$ and $C(1)^{\circ}$. By Banach's open mapping theorem (\cite{Bou} Theorem I.3.3/1), $d^1(C(1)^{\circ})$ is an open $O_k$-submodule of $C(2)^{\circ}$, and hence there is an $r \in O_k \backslash \ens{0}$ such that $r C(2)^{\circ} \subset d^1(C(1)^{\circ})$. Let $f \in C(2)^{\circ}$. Take an $\epsilon \in O_k$ with $\v{\epsilon} < 1$. We have $\v{\epsilon^{p^N}} < \v{r}$ for a sufficiently large $N \in \N$, and hence $(\epsilon f)^{p^N} \in d^1(C(1)^{\circ})$. Since the underlying $K$-algebra of $C(1)$ is perfect and $d^1$ commutes with Frobenius, $\epsilon f \in d^1(C(1)^{\circ})$. Thus $d^1 \colon C(1)^{\circ} \to C(2)^{\circ}$ is almost surjective, and $C^{\circ}$ is almost exact at $C(2)^{\circ}$. We conclude that the rational covering is almost $2$-acyclic, and hence $X$ is an affinoid perfectoid space by Theorem \ref{locally perfectoid}.
\end{proof}

\vspace{0.4in}
\addcontentsline{toc}{section}{Acknowledgements}
\noindent {\Large \bf Acknowledgements}
\vspace{0.1in}

I am deeply grateful to Takeshi Tsuji for discussions and advices in seminars. I thank to Yoichi Mieda for instructing me on elementary facts on adic spaces. I appreciate to Naoki Imai for giving significant opinions on this paper. I express my gratitude to Peter Scholze and Kevin Buzzard for giving me several interesting suggestions in personal communication. I am thankful to my friends for giving me an opportunity to talk about this topic and sharing so much time with me to discuss. I express my gratitude to family for their deep affection.

\vspace{0.2in}
I am a research fellow of Japan Society for the Promotion of Science. This work was supported by the Program for Leading Graduate 
Schools, MEXT, Japan.

\addcontentsline{toc}{section}{References}


\begin{thebibliography}{99}

\bibitem[Ber]{Ber} V.\ G.\ Berkovich, {\it Spectral Theory and Analytic Geometry over non-Archimedean Fields}, Mathematical Surveys and Monographs, Number 33, the American Mathematical Society, 1990.

\bibitem[BGR]{BGR} S.\ Bosch, U.\ G\"untzer, and R.\ Remmert, {\it Non-Archimedean Analysis A Systematic Approach to Rigid Analytic Geometry}, Grundlehren der mathematischen Wissenschaften 261, A Series of Comprehensive Studies in Mathematics, Springer, 1984.

\bibitem[Bou]{Bou} N.\ Bourbaki, {\it Espaces Vectoriels Topologiques}, El\'ements de math\'ematique, vol.\ V, Hermann, 1953.

\bibitem[BV]{BV} K.\ Buzzard, A.\ Verberkmoes, {\it Stably Uniform Affinoids are Sheafy}, preprint.

\bibitem[GR]{GR} O.\ Gabber, L.\ Ramero, {\it Almost Ring Theory}, Lecture Notes in Mathematics, Springer, 2003.

\bibitem[Hub1]{Hub1} R.\ Huber, {\it Continuous Valuations}, Mathematische Zeitschrift, Volume 212, pp.\ 455--477, Springer, 1993.

\bibitem[Hub2]{Hub2} R.\ Huber, {\it A Generalization of Formal Schemes and Rigid Analytic Varieties}, Mathematische Zeitschrift, Volume 217, pp.\ 513--551, Springer, 1994.

\bibitem[Hub3]{Hub3} R.\ Huber, {\it \'Etale Cohomology of Rigid Analytic Varieties and Adic Spaces}, Aspects of Mathematics, Vol.\ 30, Friedrich Vieweg \& Sohn Verlagsgesellschaft mbH, Braunschweig/Wiesbaden, 1996.

\bibitem[Mih]{Mih} T.\ Mihara, {\it Characterisation of the Berkovich Spectrum of the Banach Algebra of Bounded Continuous Functions}, Documenta Mathematica, Volume 19, pp.\ 769--799, 2014.

\bibitem[Sch]{Sch} P.\ Scholze, {\it Perfectoid Spaces}, Publications Math\'ematiques de I'H\'ES, Volume 116, Issue 1, pp.\ 245--313, Springer, 2012.

\bibitem[Tat]{Tat} J.\ Tate, {\it Rigid Analytic Spaces}, Inventiones Mathematicae, Volume 12, Issue 4, pp.\ 257--289, Springer, 1971.

\end{thebibliography}
\end{document}